\DeclareMathOperator{\MVal}{MVal}
\newcommand{\mres}{\mathbin{\vrule height 1.6ex depth 0pt width
		0.13ex\vrule height 0.13ex depth 0pt width 1.3ex}}
\renewcommand{\FF}{F}
\title{Valuations in  affine convex geometry}
\author[Jakob Henkel]{Jakob Henkel}
\author[Thomas Wannerer]{Thomas Wannerer}
\address{Friedrich-Schiller-Universit\"at Jena, Fakult\"at f\"ur Mathematik und Informatik, Institut f\"ur Mathematik, Ernst-Abbe-Platz 2, 07743 Jena, Germany}
\email{jakob.henkel@gmail.com}
\email{thomas.wannerer@uni-jena.de}
\thanks{TW was supported by DFG grant WA 3510/3-1.}
\subjclass[2020]{52A39, 52B45, 52A40}
\begin{document}

\begin{abstract}
	In convex geometry, the constructions that assign to a convex body its difference body,  projection body, or volume have the following properties: They are (1) invariant under volume-preserving linear changes of coordinates; (2) continuous; (3) finitely additive, and the resulting convex bodies are subsets of an irreducible representation of the special linear group. In this paper we explore the question whether there exist other constructions with these properties. We discover a surprising dichotomy: There are no new examples if one assumes translation invariance, but a plethora of examples without this assumption. 
\end{abstract}

\maketitle

\section{Introduction}

\subsection{General background}
Constructions  of convex bodies that are invariant under volume-preserving linear changes of coordinates play an important role in convex geometry. Prominent examples of this type are   the polar body, the difference body, the centroid body, the projection body,  the intersection body   or one of the many ellipsoids associated to convex bodies such as the L\"owner or John ellipsoids, see, e.g., the books by Schneider \cite{Schneider:BM}, Artstein-Avidan, Giannopolos, and Milman \cite{AGM:AGA1,AGM:AGA2}, and Gardner \cite{Gardner}.

Given a convex body $K$ in $\RR^n$, i.e.,  a non-empty convex compact set, the aforementioned constructions yield again convex bodies in $\RR^n$. However, not fixing coordinates  from the start and working instead with convex bodies in a finite-dimensional real vector space $V$, reveals  that the resulting convex bodies lie either in $V$  or its dual $V^*$. There are also important invariant constructions that yield convex bodies in vector spaces of dimension higher than that of  $V$. Viewing origin-symmetric and full-dimensional convex bodies in $V$ as unit balls of norms, one sees that the operator norm on $\End(V)\simeq V^*\otimes V$ is one such example. 

The purpose of this article is to systemically investigate constructions of convex bodies invariant under volume-preserving linear changes of coordinates subject to two additional conditions. Let us first fix our notation. We denote by $\NN$  the non-negative integers. In the following $V$ denotes a finite-dimensional real vector space of dimension $n\geq 2$. 
Let $\calK(V)$ denote the space of convex bodies in $V$ with the topology induced by the Hausdorff metric and let $\calK_0(V)$ be the subspace of convex bodies containing the origin  in their interior. 
Let $\rho \colon \SL(V)\to \GL(W)$ be a Lie group representation of the special linear group on a finite-dimensional real vector space $W$.  We denote by $D(V)$ the vector space of densities, i.e.\ scalar multiples of the Lebesgue measure, on $V$.
In this article we  consider maps $\Phi$ defined either on all of $\calK(V)$ or only on the subspace $\calK_0(V)$  and taking values in $\calK(W)$ that are
 \begin{enuma}
\item invariant:  $\rho(T) \Phi(T^{-1} K) = \Phi K$
  for all $T\in \SL(V)$ and all $K$;
\item continuous;
\item Minkowski valuations: 
$$\Phi(K\cup L)+ \Phi(K\cap L)= \Phi K + \Phi L $$
 whenever $K\cup L$ is again convex.
\end{enuma}

Here $K+L$ denotes the Minkowski sum of convex bodies. In convex geometry, a map defined on convex bodies taking values in an abelian semigroup that satisfies property (c) is  called a valuation. In the particular case that the semigroup is $\calK(W)$ with the Minkowski addition one speaks of Minkowski valuations. Since the affine surface area of a convex body, a central concept in the affine geometry of convex bodies, is not continuous, but merely upper semicontinuous, we will relax condition (b) below to upper semicontinuity.

Since the  volume  and the volume of projections of convex bodies are finitely additive, they are  important sources for constructions of valuations. Of the examples mentioned at the beginning, the difference body, the   projection body, and the centroid body (the moment body) are Minkowski valuations. 
This list of examples is not arbitrary. In fact, the following striking theorem of Ludwig demonstrates that the first two are the only possibilities if we impose the additional property of translation invariance and require that all  convex bodies be in $V$ or $V^*$.

To state Ludwig's theorem, let $\Delta K= K+(-K)$ denote the difference body of $K$. Here $-K$ denotes the reflection of $K$ in the origin. 
To define the projection body of $K$, we have to fix a  positive density $\vol$ on $V$.
In terms  of its support function on $V^{**}\simeq V$,  the projection body  of $K$ is the convex body in $V^*$  defined 
by 
$$ h_{\Pi K} (x)=\dt \vol( K+ t[0,x] ), \quad x\in V,$$ 
where $[0,x]$ denotes the line segment with end points  $0$ and $x$.

In the following we will  always consider $V$ and $V^*$ with their canonical $\SL(V)$-representations.

\begin{theorem}[{\cite{Ludwig:Minkowski}}]\label{thm:Ludwig}
	Let $\Phi\colon \calK(V)\to \calK(W)$  be an invariant continuous Minkowski valuation and assume in addition that $\Phi$ is translation-invariant. 
	\begin{enuma}
		\item If $W=V$, then there is a constant $c\geq 0$ such that $\Phi = c\Delta$.
		\item If $W = V^*$, then there is a constant $c\geq 0$ such that $\Phi = c\Pi$.
	\end{enuma}

\end{theorem}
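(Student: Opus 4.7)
My plan is to probe $\Phi$ through its support function, apply McMullen's homogeneous decomposition in the argument $K$, and use $\SL(V)$-equivariance together with the irreducibility of $V$ and $V^*$ to force all but one homogeneous degree to vanish; a classical classification of the remaining degree then finishes the proof.

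For each fixed $\xi\in W^*$ the map $K\mapsto h_{\Phi K}(\xi)$ is a continuous translation-invariant real-valued valuation on $\calK(V)$, so by McMullen's theorem
\[
 h_{\Phi K}(\xi) \;=\; \sum_{i=0}^n \psi_i(K,\xi),
\]
with $\psi_i(\cdot,\xi)$ homogeneous of degree $i$ in $K$. Uniqueness of the decomposition, continuity of $\Phi$ and the positive $1$-homogeneity of $h_{\Phi K}$ in $\xi$ propagate joint continuity and $1$-homogeneity in $\xi$ to each $\psi_i$, and they make each graded piece $\SL(V)$-equivariant in the same sense as $\Phi$ itself. Evaluating at $K=\{0\}$ shows $\psi_0(\xi)=h_{\Phi\{0\}}(\xi)$ is a sublinear $\SL(V)$-invariant function on $W^*$; since $\SL(V)$ acts transitively on both $V\setminus\{0\}$ and $V^*\setminus\{0\}$ for $n\ge 2$, this forces $\Phi\{0\}=\{0\}$ and $\psi_0\equiv 0$. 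Hadwiger's theorem identifies $\psi_n(K,\xi)=f(\xi)\vol(K)$, and the same orbit argument forces $f\equiv 0$.

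The main obstacle is eliminating the intermediate degrees and singling out $i=1$ when $W=V$ and $i=n-1$ when $W=V^*$. I would attack this by probing $\SL(V)$-equivariance with one-parameter diagonal subgroups such as $\mathrm{diag}(t,1,\dots,1,t^{-1})$ acting on degenerating families of simplices or parallelotopes, and comparing how $\psi_i$ rescales in $K$ against how $\rho(T)$ acts on the target $W$. Equivalently, one may pass to the Klain function of $\psi_i$, a continuous map on the Grassmannian $\mathrm{Gr}_i(V)$ with values in a vector bundle determined by $W$, and observe that nonzero $\SL(V)$-equivariant sections exist only when the representation-theoretic weights match, which pins down $i=1$ for $W=V$ and $i=n-1$ for $W=V^*$.

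Once only one degree survives, the classification is comparatively direct. For $W=V$ and $i=1$, the $\SL(V)$-equivariant, positively $1$-homogeneous map $\xi\mapsto\psi_1(\cdot,\xi)$ into the space of $1$-homogeneous continuous translation-invariant real-valued valuations must land in the smallest $\SL(V)$-submodule compatible with $V$; combined with the sublinearity of $h_{\Phi K}$ in $\xi$ and the forced even parity, this recovers $\psi_1(K,\xi)=c\,h_{\Delta K}(\xi)$. For $W=V^*$ and $i=n-1$, represent $\psi_{n-1}(K,\xi)=\int g(\xi,u)\,dS_{n-1}(K,u)$ via surface area measures; $\SL(V)$-equivariance and sublinearity then force $g(\xi,u)=c\,\langle\xi,u\rangle_+$, recovering $c\,\Pi K$.
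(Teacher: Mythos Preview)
This theorem is not proved in the paper; it is quoted with a citation to Ludwig's original article \cite{Ludwig:Minkowski} and used as a known input for the paper's own results. There is therefore no proof in the paper to compare yours against.

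Evaluated on its own terms, your outline has a genuine gap precisely where you yourself label it ``the main obstacle'': eliminating the intermediate homogeneous degrees. Writing that you ``would attack this by probing with one-parameter diagonal subgroups'' or by a weight-matching principle for Klain sections is a plan, not an argument; carrying this out is the substance of Ludwig's theorem, and her original proof proceeds quite differently (via a careful analysis on simplices and polytopes, extended by continuity). Note also that for $0<i<n$ the graded pieces $\xi\mapsto\psi_i(K,\xi)$ are in general not support functions, so sublinearity in $\xi$ does not pass to each degree and body-level convexity arguments cannot be applied termwise. Your final classification steps are likewise only sketched: for $i=1$ one must still pin down that the only possibility is $c\Delta$ (translation invariance rules out $c_1K+c_2(-K)$ with $c_1\neq c_2$, but one first needs the structure of degree-$1$ valuations), and for $i=n-1$ deducing $g(\xi,u)=c\,|\langle\xi,u\rangle|$ from equivariance and sublinearity is nontrivial. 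The scaffolding is reasonable, but the load-bearing steps---which are exactly what make Ludwig's theorem hard---are missing.
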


 As an immediate consequence of  Hadwiger's characterization of  rigid motion invariant valuations, one may add another case to Ludwig's theorem:
\begin{enuma}
\setcounter{enumi}{2}
\item {\it If $W= \RR$, then there exist line segments $I_0,I_1\subset \RR$ such that $\Phi K= I_0 + \vol(K) I_1 $.} 
\end{enuma}

\subsection{Our results}

Our main result shows that if we insist on translation invariance, then  there are no invariant continuous Minkowski valuations beyond those covered by Ludwig's theorem.

We call a Minkowski valuation $\Phi$ non-trivial, if there exists a convex body $K$ such that $\Phi K \neq \{0\}$. 
A representation $W$ is called irreducible, if it does not possess  non-trivial invariant subspaces. 
\begin{theorem}\label{thm:mainA} 
	Let $\Phi\colon \calK(V)\to \calK(W)$    be a non-trivial invariant continuous Minkowski valuation. If $W$ is irreducible and $\Phi$ is translation-invariant, then $W$ is isomorphic to either $\RR$, $V$, or $V^*$. 
		
\end{theorem}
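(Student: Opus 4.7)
My plan is to apply the homogeneous decomposition of translation-invariant continuous Minkowski valuations, reducing the problem to $k$-homogeneous components, and then to classify each component using Ludwig's Theorem~\ref{thm:Ludwig} together with representation-theoretic constraints on the target $W$.

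First, by the Minkowski homogeneous decomposition (obtained from McMullen's scalar decomposition applied to $K\mapsto h_{\Phi K}(\xi)$ for each $\xi \in W^*$), one writes $\Phi K = \Phi_0 K + \Phi_1 K + \cdots + \Phi_n K$ as a Minkowski sum of continuous, translation-invariant, $k$-homogeneous Minkowski valuations, each inheriting $\SL(V)$-equivariance from $\Phi$. The degree-$0$ part $\Phi_0 K \equiv L_0$ is constant, and since any $n$-homogeneous translation-invariant continuous scalar valuation is a multiple of $\vol$, the top part takes the form $\Phi_n K = \vol(K)\,L_n$. Both $L_0,\,L_n \in \calK(W)$ are necessarily $\rho(\SL(V))$-fixed.

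Second, I would show that for a non-trivial irreducible representation $W$ there are no non-zero $\rho(\SL(V))$-fixed convex bodies. The set of $w \in W$ whose $\SL(V)$-orbit is bounded is an $\SL(V)$-invariant linear subspace of $W$, hence is $\{0\}$ or $W$ by irreducibility; the latter would force $\rho(\SL(V))$ to be relatively compact in $\GL(W)$, impossible since a non-trivial representation restricts to a non-trivial homomorphism of the real simple non-compact Lie algebra $\mathfrak{sl}(V)$, and no such homomorphism lands in a compact Lie algebra. Thus $L_0 = L_n = \{0\}$ unless $W \cong \RR$, and in that case statement (c) following Ludwig's theorem applies. From now on $W$ is non-trivial and $\Phi = \Phi_1 + \cdots + \Phi_{n-1}$.

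Third, for each $1 \le k \le n-1$ I aim to prove the following classification: a non-zero continuous, translation-invariant, $k$-homogeneous, $\SL(V)$-equivariant Minkowski valuation from $\calK(V)$ into an irreducible non-trivial $W$ can occur only for $k = 1$ with $W \cong V$ (giving $\Phi_1 = c\Delta$) or for $k = n-1$ with $W \cong V^*$ (giving $\Phi_{n-1} = c\Pi$). Once $W$ is identified with $V$ or $V^*$, Ludwig's Theorem~\ref{thm:Ludwig} supplies the explicit form. To rule out exotic targets I would study the $\SL(V)$-equivariant, positively homogeneous map $\xi \mapsto h_{\Phi_k \cdot}(\xi)$ from $W^*$ into the space of $k$-homogeneous continuous translation-invariant scalar valuations on $\calK(V)$, and exploit the scarcity of finite-dimensional irreducible $\SL(V)$-subrepresentations of that space via Alesker-type irreducibility or an equivariant Klain-style embedding.

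I expect this last step to be the principal obstacle. Because $\xi \mapsto h_{\Phi_k K}(\xi)$ is only sublinear rather than linear in $\xi$, it does not directly produce a linear embedding of $W^*$ into the scalar valuation space, so one must carefully combine the sublinearity with the algebraic structure of Minkowski valuations to descend from an infinite-dimensional submodule to a finite-dimensional one. Eliminating exotic irreducible targets in the middle range $2 \le k \le n-2$ is where the bulk of the new representation-theoretic input is likely to be concentrated.
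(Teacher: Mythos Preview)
Your proposal has two substantive problems.

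First, the claimed homogeneous decomposition $\Phi K = \Phi_0 K + \cdots + \Phi_n K$ into \emph{Minkowski} valuations is not available. McMullen's theorem decomposes each scalar valuation $K\mapsto h_{\Phi K}(\xi)$ into homogeneous pieces $\psi_{\xi,k}$, but there is no reason the map $\xi\mapsto \psi_{\xi,k}(K)$ should be sublinear for intermediate degrees $k$; in general it is not. The paper handles this carefully in Proposition~\ref{prop:nicer}: only the component of \emph{lowest} non-vanishing degree is extracted, via the limit $\psi_{\xi,i_0}(K)=\lim_{t\to\infty} t^{-i_0} h_{\Phi(tK)}(\xi)$, which is a pointwise limit of support functions and hence sublinear. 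One then symmetrizes to obtain an even valuation with origin-symmetric values. So the correct reduction is not to a sum of homogeneous Minkowski valuations but to a single homogeneous, even, $\SL(V)$-invariant Minkowski valuation with $-\Phi K=\Phi K$.

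Second, and more seriously, your third step is precisely where the content of the theorem lies, and you leave it as an acknowledged obstacle without a strategy. The paper's argument here is quite concrete and worth knowing. Given a non-trivial even $k$-homogeneous $\Phi$ with $-\Phi=\Phi$, one looks at the Klain body $L_0=\Klain_\Phi(E)$ for a fixed $k$-plane $E$. Invariance under the stabilizer of $E$ forces $L_0$ to be $U_n$-fixed (Lemma~\ref{lemma:invariant formula} and Lemma~\ref{lemma: trivial action of Un on the Klain body}), hence $L_0\subset W^{U_n}$ is a segment $[-w,w]$ in the one-dimensional highest weight space. Computing the torus action on $w$ then identifies the highest weight and shows $W\simeq \largewedge^k V$ as $\SL(V)$-representations (Proposition~\ref{proposition: W = lambdakV}). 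Finally, the Klain function of $K\mapsto h_{\Phi K}(v)$ is shown to be $c\,f_v$ with $f_v(U)=|\langle u_1\wedge\cdots\wedge u_k,v\rangle|$; since these $f_v$ span a dense subspace of $C(\Grass_k(\RR^n))$ (Proposition~\ref{prop:dense}) while the image of the Klain embedding is \emph{not} dense for $2\le k\le n-2$ by the Alesker--Bernstein theorem (Theorem~\ref{thm:imageKlain}), one concludes $k\in\{1,n-1\}$. Your vague reference to ``Alesker-type irreducibility or an equivariant Klain-style embedding'' gestures in the right direction, but the specific mechanism---identifying $W$ with an exterior power via highest weight theory and then invoking the non-density of the Klain image---is the actual proof, and nothing in your outline suggests you have it.
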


 Since irreducible representations are the building blocks of all representations, Theorem~\ref{thm:mainA} provides information also if $W$ is not irreducible. Indeed, the following corollary says that the most general situation is really just  $W\simeq V^p= V\oplus \cdots \oplus V$ ($p$ summands)  and $W\simeq (V^*)^p$. In the following let $\vol$ be a fixed positive density on $V$.

\begin{corollary}\label{cor:reducible}
	Let $\Phi\colon \calK(V)\to \calK(W)$ be an invariant, continuous Minkowski valuation. If $\Phi$ is translation-invariant, then there exist
	\begin{itemize}
		\item an invariant subspace  of $W$ isomorphic to  $\RR^p \oplus V^q \oplus (V^*)^r$  for certain numbers $p,q,r\in \NN$,
		\item convex bodies $L_0, L_n\subset \RR^p$, and 
		\item  
		translation-invariant, $\SL(V)$-invariant, continuous Minkowski valuations $\Phi_1\colon \calK(V)\to \calK(V^q)$ and $\Phi_{n-1}\colon \calK(V)\to \calK((V^*)^r)$, which  are  homogeneous of degree $1$ and $n-1$, 
	\end{itemize}
	such that 
	$$ \Phi(K)= L_0 + \Phi_1(K)+ \Phi_{n-1}(K)+ \vol(K) L_n,$$
	for all convex bodies $K\in \calK(V)$.  	
\end{corollary}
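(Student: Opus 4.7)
The plan is to reduce the corollary to Theorem~\ref{thm:mainA} and Ludwig's Theorem~\ref{thm:Ludwig} by first splitting the target representation $W$ into irreducibles and then splitting $\Phi$ into its homogeneous components. Since $\SL(V)$ is semisimple, $W$ is completely reducible: write $W = \bigoplus_j W_j$ with each $W_j$ irreducible, and let $\pi_j\colon W \to W_j$ be the corresponding equivariant projection. Because $\pi_j$ is linear and equivariant, $\pi_j \circ \Phi$ is itself a continuous, translation-invariant, $\SL(V)$-invariant Minkowski valuation into $\calK(W_j)$. Theorem~\ref{thm:mainA} then forces $\pi_j \circ \Phi$ to be trivial whenever $W_j$ is not isomorphic to $\RR$, $V$, or $V^*$, so after collecting isotypic components we obtain an invariant subspace $W' \simeq \RR^p \oplus V^q \oplus (V^*)^r$ of $W$ with $\Phi(K) \in \calK(W')$ for every $K$.

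Next, I would split $\Phi$ into its homogeneous parts. For each $v \in (W')^*$, applying McMullen's theorem to the continuous translation-invariant scalar valuation $K \mapsto h_{\Phi K}(v)$ gives $h_{\Phi K}(v) = \sum_{i=0}^{n} h^{(i)}_K(v)$ with $h^{(i)}_K(v)$ homogeneous of degree $i$ in $K$. A Lagrange interpolation argument in the dilation parameter expresses $h^{(i)}_K$ as a pointwise limit of linear combinations of the support functions $h_{\Phi(rK)}$; since pointwise limits of support functions are support functions, each $h^{(i)}_K$ is sublinear in $v$ and therefore is the support function of a convex body $\Phi_i K \in \calK(W')$. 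The resulting decomposition $\Phi = \Phi_0 + \Phi_1 + \cdots + \Phi_n$ is a Minkowski sum of continuous, translation-invariant, $\SL(V)$-invariant Minkowski valuations with $\Phi_i$ homogeneous of degree $i$.

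Finally, I would identify each $\Phi_i$ summand by summand. The constant $\Phi_0$ is an $\SL(V)$-invariant convex body in $W'$, which by Schur's lemma must lie in the trivial isotypic component $\RR^p$; this is our $L_0$. The top-degree $\Phi_n$ has the form $\vol(K) L_n$ by the classical characterization of $n$-homogeneous translation-invariant continuous Minkowski valuations, and the same invariance argument places $L_n$ in $\RR^p$. For $1\leq i\leq n-1$, projecting $\Phi_i$ onto each irreducible summand and applying the admissible classifications shows: the Hadwiger addendum (c) after Theorem~\ref{thm:Ludwig} kills intermediate degrees on $\RR$-summands, while Theorem~\ref{thm:Ludwig} restricts non-zero projections to $V$-summands to degree $1$ (the difference body) and to $V^*$-summands to degree $n-1$ (the projection body). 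Thus $\Phi_i \equiv \{0\}$ for $2 \leq i \leq n-2$, $\Phi_1$ takes values in $\calK(V^q)$, and $\Phi_{n-1}$ in $\calK((V^*)^r)$, yielding the claimed formula. The principal technical obstacle is the McMullen step: verifying that each scalar homogeneous piece remains sublinear in $v$ for fixed $K$, so that it actually defines a Minkowski valuation rather than only a decomposition of its support function.
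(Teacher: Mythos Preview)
Your reduction of $W$ to an invariant subspace isomorphic to $\RR^p \oplus V^q \oplus (V^*)^r$ via complete reducibility and Theorem~\ref{thm:mainA} matches the paper exactly. The divergence begins at the McMullen step, and the obstacle you flag at the end is a genuine gap, not a technicality.

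Lagrange interpolation in the dilation parameter expresses $h^{(i)}_K(v)$ as a \emph{finite signed} linear combination $\sum_j c_{ij}\, h_{\Phi(t_j K)}(v)$ (the $c_{ij}$ come from inverting a Vandermonde matrix), not as a pointwise limit of support functions. Signed combinations of sublinear functions need not be sublinear, so this does not produce a convex body $\Phi_i K$ for $1\le i\le n-1$. The extremal degrees $i=0$ and $i=n$ \emph{do} work, since $h^{(0)}_K(v)=\lim_{t\to 0^+} h_{\Phi(tK)}(v)$ and $h^{(n)}_K(v)=\lim_{t\to\infty} t^{-n} h_{\Phi(tK)}(v)$ are honest pointwise limits of positively scaled support functions; but nothing analogous extracts the intermediate degrees as support functions. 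Consequently your final paragraph, which projects each putative Minkowski valuation $\Phi_i$ onto irreducible summands and invokes Ludwig/Hadwiger, never gets off the ground for $2\le i\le n-2$.

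The paper avoids this by \emph{not} trying to realise each homogeneous piece as a Minkowski valuation. After the reduction to $\RR^p\oplus V^q\oplus (V^*)^r$ it invokes two dedicated results, Proposition~\ref{prop:split} and Lemma~\ref{lemma:split}, that work directly with the scalar components $\psi_i(K,\xi)$. The mechanism is: restrict $K$ to a $j$-dimensional subspace so that $\psi_i(K,\cdot)$ vanishes for $i>j$, forcing $\psi_j(K,\cdot)$ to be a support function on that restricted class; combine this with the projection structure to show certain ``cross terms'' $\mu_j(K,\xi)$ have vanishing Klain section, hence are odd by Theorem~\ref{thm:Klain}; then exploit the sublinearity inequality $h_{\Phi K}(\xi'+\xi'')\le h_{\Phi K}(\xi')+h_{\Phi K}(\xi'')$ together with a scaling $K\mapsto sK$ to conclude these odd quantities are $\le 0$, hence identically zero. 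Only after this inductive elimination do $\Phi_1$ and $\Phi_{n-1}$ emerge as genuine Minkowski valuations taking values in $V^q$ and $(V^*)^r$ respectively. Your endgame is morally correct, but reaching it requires this Klain-plus-inequality argument rather than a direct homogeneous decomposition of $\Phi$.
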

The Minkowski valuations $\Phi_1$ and $\Phi_{n-1}$ are closely related to the difference and projection body. We discuss examples of such valuations in Section~\ref{sec:reducible}.

 Theorem~\ref{thm:mainA} is all the more surprising if one considers the  plethora of examples of invariant Minkowski valuations we find without the condition of translation invariance. 
  As we are going to discuss in Section~\ref{sec:Lp}, the boundary cases $p=0$ or $q=0$ of the following theorem are closely related to the $L^p$ centroid and projections bodies introduced by Lutwak, Yang, and Zhang \cite{LYZ:LpAffine}.  For $p,q>0$, however, distinctive new features arise, see Section~\ref{sec:11}. In connection with tensor valuations, an analogous construction was considered  by Haberl and Parapatits \cite[Eq. (2)]{HaberlParapatits:CentroAffine}.

Let $\nu_K(x)$ and $\kappa_K(x)$ denote the outer unit normal and the Gauss curvature at  $x$ of a convex body $K$ in $\RR^n$. We denote by $\Sym^p V$ the $p$th symmetric power of $V$. If $V$ is euclidean, then  $\Sym^p V$ can be equipped with a natural euclidean inner product, see Section~\ref{sec:symPowers} below.

\begin{theorem} \label{thm:newMink}
	Let $p,q$ be non-negative integers and let $\vol$ be a positive density on $V$. There exist  invariant continuous Minkowski valuations 
	$$\Phi^{p,q}\colon \calK_0(V)\to \calK(\Sym^p V \otimes \Sym^q V^*)$$
	and 
	$$ \Psi^{p,q}\colon \calK_0(V)\to \calK(\Sym^p V^* \otimes \Sym^q V)$$ such that, after a choice of euclidean inner product on $V$ with the  euclidean volume  equal to $\vol$, for all convex bodies 
	$K$  with smooth and strictly positively curved boundary
	\begin{equation}\label{eq:PhipqK} h_{\Phi^{p,q}(K)}(\phi) =   \int_{\partial K }  |\langle \phi, x^p \otimes \nu_K(x)^q\rangle | \langle x, \nu_K(x)\rangle^{1-q} dx  \end{equation}
	for $\phi\in\Sym^p V^* \otimes \Sym^q V$ and 
	$$ h_{\Psi^{p,q}(K)}(\psi) =   \int_{\partial K }  |\langle \psi, \nu_K(x)^p \otimes x^q\rangle | \langle x, \nu_K(x)\rangle^{-(n+p)} \kappa_K(x)  dx$$
	for $\psi\in\Sym^p V \otimes \Sym^q V^*$.
\end{theorem}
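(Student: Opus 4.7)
After fixing a Euclidean inner product on $V$ whose induced volume equals $\vol$, the plan is to extend each of the two stated integral formulas to all of $\calK_0(V)$ by expressing it as the integral of a bounded continuous function against a measure that depends continuously and additively on $K$, and then to read off the four desired properties---convex-body-valued, $\SL(V)$-equivariance, the Minkowski valuation property, and Hausdorff continuity---from the corresponding properties of that measure.

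For $\Phi^{p,q}$, the appropriate measure is the top-order support measure $\Theta_{n-1}(K;\cdot)$ on the sphere bundle $V\times S^{n-1}$: for a smooth strictly convex $K$ it is the pushforward of $\mathcal{H}^{n-1}|_{\partial K}$ along $x\mapsto(x,\nu_K(x))$, so that
\[
h_{\Phi^{p,q}(K)}(\phi)=\int_{V\times S^{n-1}}\bigl|\langle\phi,x^p\otimes u^q\rangle\bigr|\,\langle x,u\rangle^{1-q}\,d\Theta_{n-1}(K;x,u)
\]
agrees with \eqref{eq:PhipqK}. For any $K\in\calK_0(V)$ the support of $\Theta_{n-1}(K;\cdot)$ lies in $\{(x,u)\colon\langle x,u\rangle=h_K(u)\geq\min h_K>0\}$, so the integrand is continuous and bounded; since $\phi\mapsto|\langle\phi,x^p\otimes u^q\rangle|$ is convex and positively $1$-homogeneous, so is $h_{\Phi^{p,q}(K)}$, and hence it is the support function of a well-defined $\Phi^{p,q}(K)\in\calK(\Sym^pV\otimes\Sym^qV^*)$. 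Because $K\mapsto\Theta_{n-1}(K;\cdot)$ is a weakly continuous measure-valued valuation on $\calK(V)$ (Schneider, \S4.2), the Minkowski valuation property and Hausdorff continuity of $\Phi^{p,q}$ on $\calK_0(V)$ follow; translating weak convergence of the measures into uniform convergence of support functions on bounded sets of $\phi$ needs the uniform lower bound $h_K\geq\varepsilon$ on compact subsets of $\calK_0(V)$, which is what forces the restriction to $\calK_0(V)$ when $q>1$.

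For $\Psi^{p,q}$, the change of variables $u=\nu_K(x)$ (under which $\kappa_K(x)\,dx=du$) converts the defining integral into
\[
h_{\Psi^{p,q}(K)}(\psi)=\int_{S^{n-1}}\bigl|\langle\psi,u^p\otimes(\nabla h_K(u))^q\rangle\bigr|\,h_K(u)^{-(n+p)}\,du,
\]
and this right-hand side makes sense for every $K\in\calK_0(V)$ since $h_K>0$ on $S^{n-1}$ and $\nabla h_K$ exists at spherical-a.e.\ $u$ by Aleksandrov's theorem. Hausdorff continuity follows from dominated convergence, using that for $K_n\to K$ in $\calK_0(V)$ one has $h_{K_n}\to h_K$ uniformly with a uniform positive lower bound and $\nabla h_{K_n}\to\nabla h_K$ pointwise at every $u$ at which $h_K$ is differentiable. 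The Minkowski valuation property reduces to the pointwise identity, at a.e.\ $u\in S^{n-1}$, between the unordered pairs $\{(h_{K\cup L}(u),x_{K\cup L}(u)),(h_{K\cap L}(u),x_{K\cap L}(u))\}$ and $\{(h_K(u),x_K(u)),(h_L(u),x_L(u))\}$, a direct consequence of $h_{K\cup L}=h_K\vee h_L$ and $h_{K\cap L}=h_K\wedge h_L$ when $K\cup L$ is convex.

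The $\SL(V)$-equivariance of $\Phi^{p,q}$ is a direct change of variables on $\partial K$: for $T\in\SL(V)$ and $y\in\partial K$, $\nu_{TK}(Ty)=T^{-T}\nu_K(y)/c(y)$ with $c(y)=\|T^{-T}\nu_K(y)\|$, the Jacobian of $T$ restricted to $\partial K$ is $c(y)$ (since $|\det T|=1$), and $\langle Ty,T^{-T}\nu_K(y)\rangle=\langle y,\nu_K(y)\rangle$; the factors $c(y)^{-q}$ from $\nu_{TK}^q$, $c(y)^{q-1}$ from $\langle Ty,\nu_{TK}\rangle^{1-q}$, and $c(y)$ from the area element multiply to $1$, yielding $h_{\Phi^{p,q}(TK)}(\phi)=h_{\Phi^{p,q}(K)}(T^{-1}\cdot\phi)$. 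The analogous computation for $\Psi^{p,q}$ uses in addition $\kappa_{TK}(Ty)=c(y)^{-(n+1)}\kappa_K(y)$. The main technical obstacle in this plan is the continuity of $\Phi^{p,q}$: it requires the weak continuity of the support measure $\Theta_{n-1}$ on the \emph{full} sphere bundle rather than merely its projected area measure on $S^{n-1}$, a standard but nontrivial ingredient from the theory of curvature measures.
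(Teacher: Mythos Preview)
Your proposal is correct. For $\Phi^{p,q}$ it is essentially the paper's proof in different language: the paper integrates the differential form $\omega_1(\phi)$ over the conormal cycle $\nc(K)\subset V\times\PP_+(V^*)$ and invokes the weak continuity and valuation property of $\nc(K)$ (their Proposition~\ref{prop:nc}); your integration against the top-order support measure $\Theta_{n-1}(K;\cdot)$ is the same construction viewed as a measure rather than a current, and the ``standard but nontrivial ingredient'' you flag is exactly what that proposition supplies.

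The genuine difference is your treatment of $\Psi^{p,q}$. The paper handles it symmetrically via a second form $\omega_2(\psi)$ on the conormal cycle (with volume form pulled back from the $\PP_+(V^*)$ factor), whereas you pass immediately, via the Gauss map, to the representation $\int_{S^{n-1}}|\langle\psi,u^p\otimes(\nabla h_K)^q\rangle|\,h_K^{-(n+p)}\,du$ and argue directly: sublinearity in $\psi$ is manifest, continuity is dominated convergence using a.e.\ convergence of $\nabla h_{K_j}$, and the valuation property reduces to the pointwise swap of $(h_K,\nabla h_K)$ and $(h_L,\nabla h_L)$. This is more elementary---it avoids currents altogether for $\Psi^{p,q}$---but it breaks the symmetry between the two families. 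What the paper's uniform conormal-cycle construction buys is that the polarity relation $\Psi^{p,q}_V(K)=\Phi^{p,q}_{V^*}(K^*)$ becomes an immediate consequence of $(F_V)_*\nc(K)=\nc(K^*)$ (their Lemma~\ref{lem:polar} and Proposition~\ref{prop:polarityPhiPsi}); in your setup this would require a separate computation. Similarly, the paper builds $\SL(V)$-invariance into the forms $\omega_1,\omega_2$ once and for all (their Lemma~\ref{lemma:invariance}), while you verify it by an explicit change of variables on $\partial K$; both are fine, yours is just more hands-on.
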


The  family of valuations thus defined is closed under taking polarity, 
 see Proposition~\ref{prop:polarityPhiPsi} for a precise statement.

The  representations $\Sym^p V \otimes \Sym^q V^*$ are in general  not irreducible. Hence decomposing them into irreducible subspaces we  obtain further examples of non-trivial invariant Minkowski valuations. By determining the highest weights of the representations that arise in this fashion (see Section~\ref{sec: weight spaces} for a summary of highest weight theory), we obtain the following result.

\begin{theorem}\label{thm:hwt} \label{thm:TensorDecomp}
	If $W$ is an irreducible representation of $\mathrm{SL}(V)$ with highest weight $$(p+q) \varepsilon_1 + q (\varepsilon_2 + \dots + \varepsilon_{n-1}),$$
	where $p,q \in \NN$, then there exists a non-trivial invariant continuous Minkowski valuation $\calK_0(V) \to \calK(W)$.
\end{theorem}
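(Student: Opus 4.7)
The plan is to obtain the desired valuation by composing $\Phi^{p,q}$ from Theorem~\ref{thm:newMink} with the $\SL(V)$-equivariant projection from $\Sym^p V \otimes \Sym^q V^*$ onto a copy of $W$. Any $\SL(V)$-equivariant linear map preserves Minkowski sums, so composing a Minkowski valuation with such a projection yields another Minkowski valuation; it is therefore enough to locate $W$ as an irreducible summand of $\Sym^p V \otimes \Sym^q V^*$ and to verify non-triviality of the composition.

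Fix a basis $e_1, \ldots, e_n$ of $V$ with dual basis $e_1^*, \ldots, e_n^*$, and let $W$ denote the Cartan component of $\Sym^p V \otimes \Sym^q V^*$, that is, the irreducible summand generated by the highest weight vector $e_1^p \otimes (e_n^*)^q$. Since $\Sym^p V$ has highest weight $p\varepsilon_1$ and $\Sym^q V^*$ has highest weight $-q\varepsilon_n = q(\varepsilon_1 + \cdots + \varepsilon_{n-1})$, this summand has highest weight
$$(p+q)\varepsilon_1 + q(\varepsilon_2 + \cdots + \varepsilon_{n-1}),$$
exactly as required. The ambient highest weight space is one-dimensional, so $W$ occurs with multiplicity one and the projection $\pi\colon \Sym^p V \otimes \Sym^q V^* \to W$ is canonical up to scalar.

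For non-triviality I would evaluate $\pi \circ \Phi^{p,q}$ on the Euclidean unit ball $B$, choosing the basis to be orthonormal so that $\nu_B(x) = x$ and $\langle x, \nu_B(x)\rangle = 1$ on $\partial B = S^{n-1}$. Set $\phi_0 = (e_1^*)^p \otimes e_n^q \in \Sym^p V^* \otimes \Sym^q V$. A weight count shows that the weight $-p\varepsilon_1 + q\varepsilon_n$ has multiplicity one in the ambient tensor product; applying the longest Weyl element $w_0$ to the highest weight $(p+q)\varepsilon_1 + p(\varepsilon_2 + \cdots + \varepsilon_{n-1})$ of the Cartan component of $\Sym^p V^* \otimes \Sym^q V$ yields the same weight, so $\phi_0$ is the lowest weight vector of that Cartan component, which is naturally in duality with $W$. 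Substituting into \eqref{eq:PhipqK} then gives
$$h_{\pi \Phi^{p,q}(B)}(\phi_0) = h_{\Phi^{p,q}(B)}(\phi_0) = \int_{\partial B} |x_1^p x_n^q|\, dx > 0,$$
so $\pi \circ \Phi^{p,q}$ is a non-trivial invariant continuous Minkowski valuation into $\calK(W)$.

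The step requiring the most care is confirming that $\phi_0$ really lies in the dual of $W$ inside $\Sym^p V^* \otimes \Sym^q V$, so that the pairing above genuinely computes the support function of $\pi\Phi^{p,q}(B)$ at $\phi_0$; this is handled by the multiplicity-one consideration above, which forces the one-dimensional $(-p\varepsilon_1 + q\varepsilon_n)$-weight space to coincide with the lowest weight space of the Cartan component.
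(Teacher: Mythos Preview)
Your proposal is correct and follows the paper's overall strategy: project $\Phi^{p,q}$ onto the Cartan component of $\Sym^p V\otimes\Sym^q V^*$, which has exactly the required highest weight. The only difference lies in the non-triviality check. The paper (Lemma~\ref{lemma:nontrivial}) argues abstractly: viewing $\phi\in\Sym^pV^*\otimes\Sym^qV$ as a polynomial on $V\times V^*$, any nonzero $\phi$ is nonvanishing at some $(x,\xi)$ with $\langle x,\xi\rangle>0$, and one places an ellipsoid $K$ with $x\in\partial K$ and outward conormal $\xi$ to force $h_{\Phi^{p,q}(K)}(\phi)>0$. You instead pick the explicit lowest weight vector $\phi_0=(e_1^*)^p\otimes e_n^q$ of $W^*$, pin down its membership in $W^*$ via the one-dimensionality of the $(-p\varepsilon_1+q\varepsilon_n)$-weight space, and evaluate on the unit ball. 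Your computation is more concrete and entirely explicit, but is tailored to the Cartan component; the paper's argument, while less explicit, handles every isotypic component $\Phi^{p,q}_\lambda$ uniformly.
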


Whether the  condition on the highest weight specified in Theorem~\ref{thm:TensorDecomp} is not only sufficient, but also  necessary  for the existence of non-trivial invariant continuous  Minkowski valuations $\calK_0(V) \to \calK(W)$ remains an open problem. 
In low dimensions,   Theorem~\ref{thm:TensorDecomp} places no restrictions on the highest weights:

\begin{corollary}\label{cor:dimLeq3} Let the dimension of $V$ be at most $3$.  If $W$ is any representation of $\SL(V)$, then there exists a non-trivial invariant continuous Minkowski valuation $\Phi\colon \calK_0(V)\to \calK(W)$. 	
\end{corollary}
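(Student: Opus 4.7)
The plan is to reduce Corollary~\ref{cor:dimLeq3} directly to Theorem~\ref{thm:TensorDecomp} by verifying that in dimensions $n \leq 3$ the highest-weight hypothesis becomes vacuous. Two preparatory steps are needed.

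First, I would reduce to the case that $W$ is irreducible. If $W$ is an arbitrary non-zero representation, pick any non-trivial irreducible subrepresentation $W' \subseteq W$. A convex body in $W'$ is in particular a convex body in $W$, and Minkowski addition is preserved by the inclusion, so composing a non-trivial invariant continuous Minkowski valuation $\calK_0(V) \to \calK(W')$ with the inclusion $\calK(W') \hookrightarrow \calK(W)$ produces a non-trivial invariant continuous Minkowski valuation into $\calK(W)$. Thus it suffices to treat the irreducible case.

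Second, I would check the highest-weight condition. Let $\omega_k = \varepsilon_1 + \dots + \varepsilon_k$ denote the fundamental weights of $\SL(V)$; every dominant integral weight has the form $\sum_{k=1}^{n-1} m_k \omega_k$ with $m_k \in \NN$. For $n = 2$ the sum $\varepsilon_2 + \dots + \varepsilon_{n-1}$ is empty, and the dominant weight $m_1 \varepsilon_1$ matches $(p+q)\varepsilon_1$ upon setting $p = m_1$ and $q = 0$. For $n = 3$ the dominant weight $m_1 \omega_1 + m_2 \omega_2 = (m_1 + m_2)\varepsilon_1 + m_2 \varepsilon_2$ matches $(p+q)\varepsilon_1 + q \varepsilon_2$ upon setting $p = m_1$ and $q = m_2$. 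In both cases Theorem~\ref{thm:TensorDecomp} applies and yields the desired valuation.

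I do not anticipate a real obstacle: after the reduction to irreducibles, the argument is purely bookkeeping of highest weights. The dichotomy between $n \leq 3$ and $n \geq 4$ is transparent from the same computation, since for $n = 4$ a dominant weight $(m_1 + m_2 + m_3)\varepsilon_1 + (m_2 + m_3)\varepsilon_2 + m_3\varepsilon_3$ matches $(p+q)\varepsilon_1 + q\varepsilon_2 + q\varepsilon_3$ only when $m_2 = 0$, which explains why the corollary is limited to dimensions at most three.
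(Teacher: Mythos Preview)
Your proposal is correct and follows essentially the same approach as the paper, which simply notes that irreducible $\SL_n(\RR)$-representations are parametrized by partitions of length at most $n-1$ and that for $n\le 3$ these are exactly the highest weights of the form $(p+q)\varepsilon_1 + q(\varepsilon_2+\dots+\varepsilon_{n-1})$. You have simply spelled out two details the paper leaves implicit: the reduction from arbitrary $W$ to an irreducible subrepresentation, and the explicit matching of highest weights in the cases $n=2$ and $n=3$.
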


The affine surface area of convex bodies is a fundamental invariant in the affine geometry of convex bodies. 
 It arises naturally when approximating the volume of a convex body by the volume of random polytopes, see, e.g., the recent survey by Sch\"utt and Werner \cite{SchuttWerner:SurveyASA} for this and related interesting results. 
  Compared with the volume and the usual surface area of convex bodies, the affine surface area  has two seemingly   pathological properties: It vanishes on polytopes and is not continuous, but only upper semicontinuous. The latter property  means that 
$$ \limsup_{j\to \infty} \Omega(K_j)\leq \Omega(K)$$
holds for every convergent sequence of convex bodies $K_j \to K$ and was established by Lutwak \cite{Lutwak:Asa}. 
The special role of affine surface is also highlighted by a theorem of Ludwig and Reitzner~\cite{LudwigReitzner:AffineSurface}: The cone of upper semicontinuous real-valued valuations invariant under volume-preserving affine transformations is three-dimensional and  spanned by the constant valuation, volume, and affine surface area.

Since a convex function is twice differentiable almost everywhere, 
at almost every boundary point $x$ of a convex body $K$ in $\RR^n$ with non-empty interior there exist a unique outward-pointing unit  normal $\nu_K(x)$
and the  Gauss curvature $\kappa_K(x)$.  For a convex body $K$ containing the origin in its interior, let
$$\wt \kappa_K(x)= \frac{\kappa_K(x)}{\langle x,\nu_K(x)\rangle^{n+1} }$$
denote the Gauss curvature divided by a suitable power of the support function of $K$.  This expression is well known to be  invariant under volume-preserving linear transformations, see  Section~\ref{sec:GaussAffine} for a computation-free explanation of this fact.

Let $\mathrm{Conc}(0,\infty)$ be the set of concave functions $\phi\colon [0,\infty)\to [0,\infty)$ with the properties that 
$\phi(0)=0$, $\lim_{t\to 0} \phi(t)= 0$, and $\lim_{t\to \infty} \phi(t)/t=0$. 
Inspired by the work of Ludwig and Reitzner \cite{Ludwig:GeneralAsa,LudwigReitzner:Classification} on general affine surface areas,  we construct  two families  of invariant upper semicontinuous Minkowski  valuations. We call a map $\Phi\colon \calK_0(V)\to \calK(W)$ upper semicontinuous if 
for every convex body $D$ containing the origin in the interior and every convergent sequence $K_j\to K$ of convex bodies there exists an index $j_0\in \NN$ such that 
\begin{equation}\label{eq:upper semi} \Phi K_j \subset   \Phi K + D\end{equation}
holds for all $j\geq j_0$.

Let $\vol$ be a positive density on $V$. In Section~\ref{sec:GaussAffine} we show how to define $\wt \kappa_K(x)$ using only this density. Associated to $\vol$ is also a Borel measure supported on $\partial K$,  the cone volume measure,
$$ \upsilon_K(U) = \vol(\{ tx\colon x\in U,\ 0\leq t\leq 1\}), \quad  U\subset \partial K.$$

\begin{theorem}\label{thm:semi}
	Let  $\vol$ be a positive density on $V$, let $p$ be a non-negative integer, and let $f\in \mathrm{Conc}(0,\infty)$. Then  $\Phi_f^p\colon \calK_0(V) \to \calK(\Sym^p V)$, 
	\begin{equation}\label{eq:Phifp} h_{\Phi_f^p K}(\phi) = \int_{\partial K} |\langle \phi,x^p  \rangle |       f (\wt \kappa_K(x))\; d\upsilon_K(x),\end{equation}
	and $\Psi_f^p\colon  \calK_0(V) \to \calK(\Sym^p V^*)$,
	\begin{equation} h_{\Psi_f^p K}(\psi) = \int_{\partial K} |\langle \psi,\xi^p  \rangle | \langle x, \xi \rangle ^{-p}     f (\wt \kappa_K(x)) \; d\upsilon_K(x),\end{equation}
	where $\xi\in V^*$ is an outward-pointing conormal of $K$ at $x$,
	are  well-defined invariant upper semicontinuous Minkowski valuations.	
\end{theorem}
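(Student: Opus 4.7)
The plan is to verify in turn that \eqref{eq:Phifp} (and its $\Psi^p_f$-analogue) (i) defines the support function of a convex body, (ii) is $\SL(V)$-invariant, (iii) satisfies the Minkowski valuation identity, and (iv) is upper semicontinuous in the sense of \eqref{eq:upper semi}. Of these, (iv) will be the main obstacle. For (i), observe that $\phi\mapsto |\langle\phi,x^p\rangle|$ is sublinear and positively $1$-homogeneous for every $x\in\partial K$, and that $f(\wt\kappa_K)\in L^1(\upsilon_K)$ by the general affine surface area bounds of Ludwig \cite{Ludwig:GeneralAsa} and Ludwig--Reitzner \cite{LudwigReitzner:Classification}. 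Hence $h_{\Phi^p_f K}$ is a sublinear, positively $1$-homogeneous function of $\phi$ with a $\phi$-uniform integrable majorant on compact subsets of $\Sym^p V^*$, hence continuous. The analogous reasoning works for $\Psi^p_f$ once one notes that the integrand is invariant under positive rescaling of the outward conormal $\xi$, so well-defined pointwise.

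For (ii) I would invoke two facts highlighted earlier in the paper: $\wt\kappa_{TK}(Tx)=\wt\kappa_K(x)$, and $T_*\upsilon_K=\upsilon_{TK}$ for $T\in\SL(V)$ (since both volume and cones are preserved). Combined with the natural $\SL(V)$-actions on $\Sym^p V$ and $\Sym^p V^*$, a change of variables $x=Ty$ in \eqref{eq:Phifp} yields $h_{\rho(T)\Phi^p_f(T^{-1}K)}(\phi)=h_{\Phi^p_f K}(\phi)$ for all $\phi$; the analogous check for $\Psi^p_f$ uses in addition that outward conormals transform by the inverse transpose.

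For (iii), when $K\cup L$ is convex the boundaries decompose up to $\mathcal{H}^{n-1}$-null sets as $\partial K\cup\partial L=\partial(K\cup L)\cup\partial(K\cap L)$, and at any point $x$ lying in two paired boundaries the outer unit normals of the paired bodies coincide. Since $\wt\kappa_\cdot(x)$, $\langle x,\nu(x)\rangle$ and the cone volume density are local quantities depending only on the boundary and the origin, they match pairwise on this decomposition, giving the pointwise identity $h_{\Phi^p_f K}(\phi)+h_{\Phi^p_f L}(\phi)=h_{\Phi^p_f(K\cup L)}(\phi)+h_{\Phi^p_f(K\cap L)}(\phi)$, which is the Minkowski valuation property. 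The same decomposition handles $\Psi^p_f$.

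The heart of the proof is (iv). For each fixed $\phi$, I would adapt the Ludwig--Reitzner upper semicontinuity argument for the scalar general affine surface area \cite{LudwigReitzner:Classification} to the weighted integrand: the continuity and boundedness of the weight $|\langle\phi,x^p\rangle|$ on a fixed ball containing $K$ and all $K_j$ allow the Jensen-type inequality based on the concavity and growth hypotheses on $f$ to go through, producing $\limsup_j h_{\Phi^p_f K_j}(\phi)\leq h_{\Phi^p_f K}(\phi)$ whenever $K_j\to K$ in $\calK_0(V)$. To upgrade this pointwise statement to \eqref{eq:upper semi}, I would invoke the Blaschke selection theorem: the same scalar bound shows the $\Phi^p_f K_j$ lie in a fixed bounded region of $\Sym^p V$, so every subsequential Hausdorff limit is a convex body whose support function is dominated by $h_{\Phi^p_f K}$ and is therefore contained in $\Phi^p_f K$; this is equivalent to the eventual inclusion $\Phi^p_f K_j\subset \Phi^p_f K+D$. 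The delicate step is controlling the cone volume measures $\upsilon_{K_j}$ near non-smooth boundary points of the limit $K$, and this is precisely where the hypothesis $f\in\mathrm{Conc}(0,\infty)$ is essential.
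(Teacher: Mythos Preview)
Your proposal is correct and follows essentially the same route as the paper: finiteness and sublinearity via Jensen, $\SL(V)$-invariance from the affine invariance of $\wt\kappa_K$ and $\upsilon_K$, the valuation property via Sch\"utt's boundary decomposition, and upper semicontinuity by adapting Ludwig's argument for general affine surface areas (the paper absorbs the weight $|\langle\phi,x^p\rangle|$ into an auxiliary measure $\mu_K$, pairs it with a curvature-type measure $C_K$ coming from the conormal cycle, and runs a Lusin-set subdivision with Jensen on each piece---exactly the mechanism you gesture at, with your Blaschke-selection upgrade packaged as a preliminary lemma). The one difference worth noting is that the paper does not repeat the argument for $\Psi^p_f$ but instead derives it from $\Phi^p_f$ via polarity, using $\wt\kappa_{K^*}=\wt\alpha_K$ and the area formula.
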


The family of Minkowski valuations thus defined turns out to be closed under taking polarity, see Proposition~\ref{prop:polarity}.

\subsection{Comparison with other work}

Our interest in invariant Minkowski valuations was sparked in part by the recent proof of the affine quermassintegral inequalities by Milman and Yehudayoff \cite{MilmanYehudayoff:Affine}. One stumbling block there  was to construct a convex body---analogously to the difference and the projection body---for projections onto subspaces of dimension strictly between $1$ and $n-1$. Such a convex body would be expected to be a subset of the exterior power $\largewedge^k V$, which is an irreducible representation of the special linear group (recall also that $\largewedge^1 V=V$ and $\largewedge^{n-1} V\simeq V^*$). Milman and Yehudayoff found a work-around for this issue and proved the affine quermassintegral inequalities, but the question whether there is such a construction remained open. 

The term Minkowski valuation was introduced by Ludwig in the seminal paper \cite{Ludwig:Minkowski},   building on earlier work of Schneider~\cite{Schneider:Endo}. The research on Minkowski valuations so far falls  into three broad categories:  Improvements of the theorems by  Ludwig \cite{Ludwig:Minkowski,Ludwig:Projection,Ludwig:Intersection}  by weakening  or changing the assumptions,   for example  by removing translation invariance or  continuity \cite{Parapatits:ContraLp,Parapatits:CovariantLp,AbardiaEtal:VC,Haberl:Minkowski,LiLeng:LpMinkowski,LengMa:Plane}.
Papers in the second category require invariance  not under the action of the full special linear group,  but only of subgroups such as the complex special linear  or the orthogonal groups \cite{AbardiaBernig:Complex,Haberl:ComplexAffine,Dorrek:Endomorphisms,SchusterW:Generalized,OrtegaSchuster:FixedPoints}.
The third category consists of investigations that consider valuations on different classes of subsets  or functions on  $V$, e.g., lattice polytopes or  convex functions,   with values in the space of convex bodies in $V$ or $V^*$ \cite{CLM:Minkowski,BoroczkyLudwig:Minkowski}. The idea of systematically investigating Minkowski valuations with values in $\calK(W)$ where $W$ is not necessarily isomorphic to  $V$ or $V^*$ appears in this paper for the first time. However, when a first version this work was finished we have learned about a paper of Schneider~\cite{Schneider:GenDiff} and the recent preprints \cite{Haddad_etal:GenProj, Haddad_etal:LpIsoperimetric} where with a different intention generalized difference and  projection bodies taking values in $V^p$ and $(V^*)^p$ are introduced. To the best of our knowledge,  the upper semicontinuous Minkowski valuations of Theorem~\ref{thm:semi} have not been considered  in the literature.

\subsection{Organization}

In Sections 2 to 4 we collect  for later use  results   from convex geometry, representation theory, and the theory of valuations. We prove Theorem~\ref{thm:mainA} in Sections 5 and  6. The  new examples of invariant Minkowski valuations presented in Theorem~\ref{thm:newMink} are constructed and discussed  in Section 8. In Section 9 we prove Theorem~\ref{thm:semi} on upper semicontinuous Minkowski valuations. In Section 10 we discuss open problems arising from this work.

\subsection{Acknowledgments} We thank Dmitry Faifman and Monika Ludwig for their helpful comments on  earlier versions of this work.

\section{Convex geometry}
\label{sec:convexity}

We collect in this section for later reference several facts about the boundary of convex bodies.
With the exception of the invariant definition of the support function that we discuss in the following paragraph, we refer the reader for all other elementary concepts from convex geometry not defined here  to the book by Schneider~\cite{Schneider:BM}. 
 
Let $V$ be a finite-dimensional real vector space.
The  support function of a convex body $K$ in $ V$ is the  sublinear function 
$$ h_K(\xi)= h(K,\xi)= \max_{x\in K} \langle x,\xi\rangle, \quad \xi\in V^*.$$
The assignment $K\mapsto h_K$  is a bijection between  convex bodies in $V$ and sublinear functions on $V^*$.

By a classical theorem of Alexandrov, a convex function is twice differentiable at almost every point (see, e.g., \cite[Theorem 6.4.1]{EvansGariepy}).   This theorem has  important implications for the boundary structure of convex bodies. Indeed, locally around a boundary point $x$, the boundary of a convex body with non-empty interior can be represented as the graph of a convex function.  If this function is twice differentiable at $x$, then $x$ is called a normal boundary point.  Consequently, if $K\subset \RR^n$ is a convex body with non-empty interior, then   almost all boundary points (with respect to the $(d-1)$-dimensional Hausdorff measure) are normal.

At boundary points $x$ of $K$ where there exists a unique outer  normal $u=\nu_K(x)$ (which is the same as requiring that there is a unique supporting hyperplane through $x$) define $\Delta=\Delta(K,x,\delta)$ to be the number such that 
the slice $$ \{ y\in K \colon h_K(u)-\Delta\leq\langle y,u\rangle\leq h_K(u)\}.$$
has volume $\delta$. 
By \cite[Hilfsatz 2]{Leichtweisz:ZurAffinoberflaeche} (see also \cite[Lemma 10]{SchuttWerner:Floating}), if $x$ is normal, the limit
$$ c_n \lim_{\delta\to 0} \frac{ \Delta(K,x,\delta)}{\delta^{2/(n+1)}},$$ 
 where $c_n$ is a dimensional constant, exists and coincides  with the  $(n+1)$-st root of  the Gauss curvature $\kappa_K(x)$ of $K$ at $x$.

Given a convex body $K$ in $\RR^n$ and $r>0$ let  $$(\partial K)_r=\{ x\in \partial K \colon \exists a \colon x\in B(a,r)\subset K\}$$
and $(\partial K)_+ = \bigcup_{r>0}  (\partial K)_r$.
Each boundary point $x\in (\partial K)_+$ possesses a unique outer unit normal. 
\begin{lemma}[{\cite[Lemmas 2.1, 2.2, and 2.3]{Hug:Contributions}}]
	\label{lemma:Gauss Lipschitz}
Let $K\subset \RR^n$ be a convex body with non-empty interior und let $r>0$. The following properties hold:
\begin{enuma}
	\item $\partial K \setminus (\partial K)_+$  has $(n-1)$-dimensional Hausdorff measures zero;
	\item the restriction of the Gauss map $\nu_K$ to $(\partial K)_r$ is Lipschitz;
	\item  at a.e.\ $x\in  (\partial K)_r$ the  Jacobian of the Gauss map equals $\kappa_K(x)$. 
\end{enuma}
\end{lemma}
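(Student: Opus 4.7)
All three assertions share a common geometric input: if $x \in (\partial K)_r$ with inscribed ball $B(a, r) \subset K$ and $x \in \partial B(a, r)$, then the unique tangent hyperplane to $B(a, r)$ at $x$ must support $K$ there---indeed, any supporting hyperplane of $K$ through $x$ in turn supports $B(a,r)$, so must coincide with the unique tangent to the ball. Hence $\nu_K(x)$ is uniquely defined and equals $(x - a)/r$, i.e.\ $a = x - r\nu_K(x)$.

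For (a), the plan is to show that every normal boundary point lies in $(\partial K)_+$; combined with Alexandrov's theorem this yields the claim. After an affine normalization so that $x = 0$ and $\nu_K(x) = -e_n$, the boundary of $K$ is locally the graph of a convex function $f \colon U \subset \RR^{n-1} \to \RR$ with $f(0) = 0 = \nabla f(0)$ that is twice differentiable at the origin. The Alexandrov second-order expansion yields a quadratic majorant $f(z) \le C|z|^2$ on a neighborhood of $0$; choosing $\rho > 0$ sufficiently small and at most $1/(2C)$, the estimate $\rho - \sqrt{\rho^2 - |z|^2} \ge |z|^2/(2\rho) \ge f(z)$ implies that the ball $B((0, \rho), \rho)$ is contained in $K$, witnessing $x \in (\partial K)_\rho$.

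For (b), fix $x, y \in (\partial K)_r$ with inscribed-ball centers $a_x = x - r\nu_K(x)$ and $a_y = y - r\nu_K(y)$. Since $B(a_x, r) \subset K$ lies in the supporting half-space of $K$ at $y$, we have $\langle a_x - y, \nu_K(y)\rangle \le -r$; interchanging $x$ and $y$ and summing the two resulting inequalities yields
\[\langle y - x, \nu_K(y) - \nu_K(x)\rangle \ge r |\nu_K(x) - \nu_K(y)|^2,\]
and Cauchy--Schwarz produces the Lipschitz bound $|\nu_K(x) - \nu_K(y)| \le r^{-1}|x - y|$.

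For (c), Rademacher's theorem applied to the Lipschitz map from (b) ensures that a tangential differential $D\nu_K(x)$ exists at almost every $x \in (\partial K)_r$. By (a), almost every such $x$ is also a normal boundary point, and at such a point the Alexandrov expansion of the local graph function identifies this differential with the Weingarten endomorphism, whose determinant is by definition $\kappa_K(x)$. The main technical point is precisely this identification: one must verify that the derivative supplied by Rademacher's theorem agrees almost everywhere with the pointwise derivative read off from the Alexandrov expansion, since the two notions of differentiability produce a priori different exceptional sets and must be reconciled on the intersection of their respective full-measure domains.
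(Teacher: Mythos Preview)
The paper does not prove this lemma; it is simply quoted from Hug's monograph with a citation and no argument supplied. So there is no ``paper's proof'' to compare against, and your proposal is not competing with anything written here.

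That said, your sketch is essentially correct and follows the standard route. Parts (a) and (b) are complete: the inscribed-ball argument for the Lipschitz estimate in (b) is exactly the expected one, and for (a) the observation that a second-order Taylor bound $f(z)\le C|z|^2$ lets one fit a ball of radius $\rho\le 1/(2C)$ tangent at $x$ is the right mechanism. One small point of care in (a): you need $\rho$ small enough not only to satisfy the curvature inequality but also so that the entire ball stays within the region where the local graph representation is valid; this is harmless but worth saying.

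For (c) you correctly flag the real issue---that the Rademacher differential of $\nu_K$ and the Weingarten map coming from the Alexandrov Hessian are a priori defined on different full-measure sets and via different limiting procedures---but you stop short of resolving it. The actual verification (carried out in Hug's Lemmas~2.2--2.3) proceeds by comparing both derivatives in the local graph chart and using that the Alexandrov second differential controls the first-order increments of $\nabla f$, hence of $\nu_K$, at the point in question. Your paragraph is an accurate description of what must be done rather than a proof, so if you intend this as a self-contained argument you should either fill in that step or, as the paper does, cite Hug.
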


At points $u\in S^{n-1}$ where the support function of $K$ is twice differentiable,  denote by $\alpha_K(u)$ the sum of the principal $(n-1)$-minors of the Hessian of the support function of $K$ at $u$. In terms of the mixed discriminant,  $\alpha_K(u)$ can be expressed as 
$$ \alpha_K(u) =  D( u\otimes u , \nabla^2 h_K(u), \ldots, \nabla^2 h_K(u)).$$

\begin{lemma}[{\cite[Lemma 2.7]{Hug:Contributions}}]\label{lemma:K+}
	Let $K$  be a convex  body in $\RR^n$. Suppose that its support function is  twice differentiable at $u\in S^{n-1}$. Let $x=\nabla h_K(u)$. Then the following statements are equivalent: 
	\begin{enuma}
		\item $x\in (\partial K)_+$. 
		\item $\alpha_K(u)>0$. 
	\end{enuma}
	
\end{lemma}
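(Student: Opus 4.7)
Throughout, write $Q := \nabla^2 h_K(u)$ for the Hessian of the support function at $u$. The plan is to show that (a) and (b) are each equivalent to the strict positive definiteness of $Q|_{u^\perp}$. Since $h_K$ is positively $1$-homogeneous, Euler's identity gives $Qu = 0$, so $Q$ acts effectively only on $u^\perp$. A direct computation of the mixed discriminant in a basis of $u^\perp$ diagonalizing $Q|_{u^\perp}$ then yields
\[
\alpha_K(u) = D(u \otimes u, Q, \ldots, Q) = \tfrac{1}{n}\det(Q|_{u^\perp}).
\]
Convexity of $h_K$ forces $Q|_{u^\perp} \geq 0$, so (b) is equivalent to $Q|_{u^\perp}$ being strictly positive definite.

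For $(a) \Rightarrow (b)$, suppose $B(a,r) \subset K$ with $x \in \partial B(a,r)$. Since every supporting hyperplane of $K$ at $x$ is also a supporting hyperplane of $B(a,r)$ at $x$, uniqueness of the outward normal of a ball forces $a = x - ru$. The inequality $h_{B(a,r)} \leq h_K$, together with equality and matching gradients at $u$, yields via a second-order Taylor comparison
\[
Q(w,w) \geq r|w|^2 \qquad \text{for all } w \in u^\perp,
\]
which gives strict positive definiteness of $Q|_{u^\perp}$.

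For $(b) \Rightarrow (a)$, set $g(v) := h_K(v) - \langle x, v\rangle \geq 0$ on $S^{n-1}$. The inclusion $B(x - ru, r) \subset K$ is equivalent to $g(v) \geq r(1 - \langle u, v\rangle)$ for all $v \in S^{n-1}$. The Taylor expansion of $g$ at $u$, together with $Qu = 0$ and the identity $1 - \langle u, v\rangle = \tfrac{1}{2}|v - u|^2$ on the sphere, produces this inequality on some spherical neighborhood $U$ of $u$ for every $r$ smaller than the smallest eigenvalue $\lambda$ of $Q|_{u^\perp}$.

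The main obstacle is globalizing this inequality, since a priori $g$ could vanish on $S^{n-1} \setminus U$, precisely at other outer normals of $K$ at $x$. I would rule this out by noting that, by convexity of the normal cone $N_K(x)$, the existence of a second normal $v' \neq u$ in $N_K(x) \cap S^{n-1}$ would produce outer normals arbitrarily close to $u$, contradicting the strict local positivity of $g$ near $u$ already established. Hence $u$ is the unique outer normal at $x$, so $g > 0$ on $S^{n-1} \setminus \{u\}$; by compactness of the complement $S^{n-1} \setminus U$ we obtain $g \geq m > 0$ there. Choosing $r < \min(\lambda, m/2)$ then makes $g(v) \geq r(1 - \langle u, v\rangle)$ hold globally on $S^{n-1}$, proving $x \in (\partial K)_+$.
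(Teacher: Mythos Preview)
The paper does not give a proof of this lemma; it is quoted verbatim from \cite[Lemma~2.7]{Hug:Contributions} and used as a black box. So there is nothing in the paper to compare your argument against.

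Your argument is essentially correct and follows the natural route. A few remarks:

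\begin{itemize}
\item The identity $\alpha_K(u)=\tfrac{1}{n}\det(Q|_{u^\perp})$ is right; you may want to note explicitly that $Q\geq 0$ follows from convexity of $h_K$ together with the second-order Taylor expansion, since you use this both to deduce $Qu=0$ from $Q(u,u)=0$ and to conclude that positivity of the determinant forces strict positive definiteness.
\item In the local step of $(b)\Rightarrow(a)$, the neighbourhood $U$ you produce a priori depends on the chosen $r$. To close the argument cleanly, fix once and for all some $r_0<\lambda$ (e.g.\ $r_0=\lambda/2$), obtain the corresponding $U$, and observe that the local inequality then holds automatically for every $r\leq r_0$. Then the final choice $r<\min(r_0,m/2)$ is unambiguous.
\item Your globalization is correct but slightly roundabout. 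You do not actually need to argue via the convex normal cone: the local estimate $g(v)\geq \tfrac{r_0}{2}|v-u|^2$ already shows $g>0$ on $U\setminus\{u\}$, and since the zero set of $g$ on $S^{n-1}$ equals $N_K(x)\cap S^{n-1}$, which is connected (being the intersection of a convex cone with the sphere) and contains $u$, it cannot meet $S^{n-1}\setminus U$ without meeting $U\setminus\{u\}$. Either way, the conclusion that $u$ is the unique unit outer normal at $x$, hence $g>0$ on the compact set $S^{n-1}\setminus U$, is sound.
\end{itemize}

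With these cosmetic fixes the proof stands.
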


The following two results relate $\kappa_K$ and $\alpha_K$. 

\begin{lemma}[{\cite[Lemma 2.6]{Hug:Contributions}}] \label{lemma:alphakappa}Let $K\subset \RR^n$ be a convex body with non-empty interior. 
	Then for a.e.\ $u\in \nu_K((\partial K)_+)$, the support function $h_K$ is twice differentiable  at $u$, $x=\nabla h_K(u)$ is a normal boundary point of $K$, and 
$$\kappa_K(x) \alpha_K(u)=1.$$
\end{lemma}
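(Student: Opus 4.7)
The plan is to exploit the classical duality between $\partial K$ and $S^{n-1}$ mediated by the Gauss map $\nu_K$ and the gradient $\nabla h_K$ of the support function. Set $S_+:=\{u\in S^{n-1}: h_K \text{ is twice differentiable at } u \text{ and } \alpha_K(u)>0\}$. By Alexandrov's theorem applied to the convex function $h_K$, twice differentiability holds at $\mathcal{H}^{n-1}$-almost every $u\in S^{n-1}$. Lemma~\ref{lemma:K+} characterizes the condition $\alpha_K(u)>0$ as $x:=\nabla h_K(u)\in (\partial K)_+$, and since at a point of differentiability $\nabla h_K(u)$ is the unique maximizer of $\langle\,\cdot\,,u\rangle$ on $K$, we also have $\nu_K(x)=u$ whenever $u\in S_+$. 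Combining these observations yields that $\nu_K((\partial K)_+)$ and $S_+$ agree up to an $\mathcal{H}^{n-1}$-null set, reducing the problem to the verification of the identity on $S_+$.

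At any $u\in S_+$, positive homogeneity of $h_K$ of degree one gives $\nabla^2 h_K(u)\,u=0$, so $u$ lies in the kernel of the Hessian; since the Hessian is symmetric its image is contained in $u^\perp$, and the restriction $H:=\nabla^2 h_K(u)|_{u^\perp}\colon u^\perp\to u^\perp$ has determinant $\alpha_K(u)$. The map $\nabla h_K$ is a right inverse to the Gauss map on $S_+$, and its differential at $u$, viewed as a linear map $T_u S^{n-1}=u^\perp\to T_x\partial K=u^\perp$, equals $H$. On the other hand, by Lemma~\ref{lemma:Gauss Lipschitz}(c), the Jacobian of the Gauss map at $x$ equals $\kappa_K(x)$.

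The desired identity now follows from applying the chain rule to $\nu_K\circ\nabla h_K=\mathrm{id}_{S^{n-1}}$ on $S_+$: taking determinants on $u^\perp$ yields $\kappa_K(x)\,\alpha_K(u)=1$. The main obstacle is the measure-theoretic bookkeeping in the first paragraph: passing Alexandrov's a.e.\ statement on $S^{n-1}$ back to an a.e.\ statement on $\nu_K((\partial K)_+)$ and simultaneously ensuring that $x=\nabla h_K(u)$ is a normal boundary point and that the Gauss map is differentiable at $x$ requires exhausting $(\partial K)_+=\bigcup_{r>0}(\partial K)_r$ and using the Lipschitz property of $\nu_K|_{(\partial K)_r}$ from Lemma~\ref{lemma:Gauss Lipschitz}(b) to transport null sets through the bijection. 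Once a good $u$ is fixed, the identity itself is essentially the inverse function theorem on $u^\perp$.
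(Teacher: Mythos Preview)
The paper does not supply its own proof of this lemma; it simply cites \cite[Lemma~2.6]{Hug:Contributions}. So there is no argument in the paper to compare against, and your proposal should be read as an independent attempt to recover Hug's result.

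Your overall strategy is the right one and matches the standard approach: use Alexandrov's theorem on $h_K$, invoke Lemma~\ref{lemma:K+} to identify $\alpha_K(u)>0$ with $\nabla h_K(u)\in(\partial K)_+$, and then read off the identity $\kappa_K(x)\alpha_K(u)=1$ from the chain rule applied to the pair of inverse maps $\nu_K$ and $\nabla h_K$ on the tangent space $u^\perp$. The identification of $\det H$ with $\alpha_K(u)$ via the $0$-homogeneity of $\nabla h_K$ is correct, as is the use of Lemma~\ref{lemma:Gauss Lipschitz}(c) for the Jacobian of $\nu_K$.

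The one place where your write-up is still a sketch rather than a proof is exactly the point you flag yourself: the measure-theoretic alignment. You need, for a.e.\ $u\in\nu_K((\partial K)_+)$, that \emph{simultaneously} (i) $h_K$ is twice differentiable at $u$, (ii) $x=\nabla h_K(u)$ is a normal boundary point, and (iii) $\nu_K$ is differentiable at $x$. Each holds a.e.\ with respect to the appropriate measure, but the chain rule for Lipschitz maps does not hold pointwise a.e.\ without further care, and you must check that the exceptional sets on $\partial K$ are carried to $\mathcal H^{n-1}$-null sets on the sphere. The Lipschitz bijections $\nu_K|_{(\partial K)_r}$ give you this (Lipschitz images of null sets are null), but you should state explicitly that you exhaust $(\partial K)_+$ by the sets $(\partial K)_r$, discard the null set on each $(\partial K)_r$ where any of (ii) or (iii) fails, push forward via $\nu_K$, and then intersect with the full-measure set of Alexandrov points of $h_K$. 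Once this bookkeeping is written out, the argument is complete.
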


We denote by $K^*$ the polar body of $K$.

\begin{theorem}[{\cite[Theorem 2.2]{Hug:CurvRel}}] \label{thm:curvRel} Let $K\subset \RR^n$ be a convex body containing the origin in its interior. For a.e.\ $x\in \partial K$ the following properties hold:
	\begin{enuma} 
		\item $x$ is a normal boundary point of $K$;
		\item the support function $h_{K^*}$ is second order differentiable at $x$;
		\item $ \kappa_K(x) = \langle x/|x|, \nu_K(x)\rangle^{n+1} \alpha_{K^*}( x/|x|).$
\end{enuma}
\end{theorem}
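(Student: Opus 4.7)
The approach combines Alexandrov's theorem on twice differentiability of convex functions with Lemmas~\ref{lemma:Gauss Lipschitz}, \ref{lemma:K+}, and \ref{lemma:alphakappa} applied to both $K$ and its polar body $K^*$, together with a transport of almost-everywhere properties between $\partial K$ and $\partial K^*$ via the polar map. For (a), around any boundary point in $(\partial K)_+$ the surface $\partial K$ is locally the graph of a convex function over the tangent hyperplane, and Alexandrov's theorem applied to this local graph gives twice differentiability almost everywhere, which is by definition the condition for the corresponding boundary point to be normal. Combined with Lemma~\ref{lemma:Gauss Lipschitz}(a), which already discards the $\mathcal{H}^{n-1}$-negligible set $\partial K\setminus (\partial K)_+$, this yields (a).

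For (b), the sublinear function $h_{K^*}$ on $V$ is twice differentiable Lebesgue-almost everywhere by Alexandrov. Because $h_{K^*}$ is positively $1$-homogeneous, twice differentiability at a point $y\neq 0$ is equivalent to twice differentiability at $y/|y|$, so the exceptional set is a cone whose intersection with $S^{n-1}$ has $\mathcal{H}^{n-1}$-measure zero. Since $K$ has non-empty interior and contains the origin in its interior, the radial map $r\colon S^{n-1}\to \partial K$, $r(u)=\rho_K(u)\,u$, is bi-Lipschitz and therefore preserves $\mathcal{H}^{n-1}$-null sets; transporting the exceptional set from $S^{n-1}$ to $\partial K$ via $r$ gives (b).

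For (c), the polar correspondence sends $x\in \partial K$ with outer unit normal $u=\nu_K(x)$ to $\xi(x) := u/h_K(u)\in \partial K^*$, and the outer normal direction at $\xi(x)$ on $\partial K^*$ is precisely $x/|x|$. Applying Lemma~\ref{lemma:alphakappa} to the polar body $K^*$ at the unit vector $x/|x|$ then yields, for almost every $x\in \partial K$, the identity $\alpha_{K^*}(x/|x|)\,\kappa_{K^*}(\xi(x))=1$. Formula (c) is thereby reduced to the classical Gauss curvature duality
\begin{equation*}
\kappa_K(x)\,\kappa_{K^*}(\xi(x)) = \left(\frac{\langle x,\nu_K(x)\rangle}{|x|}\right)^{n+1},
\end{equation*}
which I would verify by computing the Jacobian of the composition of the Gauss map of $K$ with the polar map $x\mapsto \xi(x)$ and comparing it with the Jacobian of the Gauss map of $K^*$, using the identity $\rho_K=1/h_{K^*}$.

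The main obstacle will be the simultaneous alignment of the three almost-everywhere statements on a common full-measure subset of $\partial K$, so that the polar map correctly transports the exceptional sets associated to $K$ and to $K^*$. The plan is to work on the exhaustion $(\partial K)_r$, $r>0$, where the Gauss map is Lipschitz by Lemma~\ref{lemma:Gauss Lipschitz}(b) and the relevant change-of-variables computations behave well with respect to $\mathcal{H}^{n-1}$, and only at the end let $r\to 0$.
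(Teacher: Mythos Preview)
The paper does not contain a proof of this theorem: it is cited as \cite[Theorem 2.2]{Hug:CurvRel} and used as a black box. There is therefore no ``paper's own proof'' to compare against.

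That said, your outline is in the right spirit and parts (a) and (b) are essentially correct and standard. For (c), however, your reduction to the ``Gauss curvature duality''
\[
\kappa_K(x)\,\kappa_{K^*}(\xi(x)) = \left(\frac{\langle x,\nu_K(x)\rangle}{|x|}\right)^{n+1}
\]
is not a genuine simplification: this identity, for \emph{general} convex bodies at \emph{almost every} boundary point, is itself at the level of difficulty of the theorem you are trying to prove. Verifying it by ``computing Jacobians'' presupposes that the relevant maps (the Gauss map of $K$, the polar map, and the Gauss map of $K^*$) are simultaneously differentiable with the right invertibility properties on a set of full measure, and that the exceptional sets on $\partial K$ and $\partial K^*$ correspond under the polar map. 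You correctly identify this alignment as the main obstacle, but your plan to ``work on $(\partial K)_r$ and let $r\to 0$'' does not by itself resolve it: the polar image of $(\partial K)_r$ need not be contained in any $(\partial K^*)_{r'}$, so Lipschitz control on one side does not automatically transfer to the other. Hug's actual proof in \cite{Hug:CurvRel} proceeds more directly by relating the second-order Taylor expansions of $h_{K^*}$ at $x/|x|$ and of the local boundary representation of $K$ at $x$, rather than passing through $\kappa_{K^*}$.
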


\subsection{Gauss curvature and affine geometry}\label{sec:GaussAffine}

Let us recall how affine constructions give rise to expressions involving the Gauss curvature of convex bodies. Since we will only need Corollary~\ref{cor:kappaAlpha} in the following,  readers not interested in these constructions may skip them.

Let $V$ be a finite-dimensional real vector space, let $M$ be a smooth manifold, and let $f\colon M\to V$ be a smooth immersion. The second fundamental form, defined in classical differential geometry using a euclidean inner product on $V$, is in fact an affine concept.  As explained for example in \cite{Gromov:SignMeaning}, let $x\in M$, $U= df_x(T_xM)$ and let $\pi\colon V\to V/U$ denote the canonical projection. Since $x$ is a critical point of $\pi\circ f$, its Hessian  
$$h = \nabla^2 (\pi \circ f)\in (T_x^*M)^{\otimes 2}\otimes (V/U)$$ at $x$ does not depend on the choice of connection on $M$. Choosing a euclidean inner product on $V$ to identify $V/U=U^\perp$, $h$ becomes the usual second fundamental form. 

If $M$ is a hypersurface, then the determinant of $h$ is an element of 
$$  (\largewedge^{n-1} T^*_x M)^{\otimes 2} \otimes (V/U)^{\otimes (n-1)}.$$
Recall that if $0 \to U\to V\to W\to 0$ is exact and $\dim U=m$, $\dim V=n$, then there is a canonical isomorphism
$$ \largewedge^n V^*  \otimes \largewedge^m  U \simeq \largewedge^{n-m} W^*.$$ 
Applied to $0\to T_xM \to V\to V/U\to 0$, we find that 
\begin{equation}\label{eq:deth}\det h \in   (\largewedge^{n} V^*)^{\otimes 2} \otimes (V/U)^{\otimes (n+1)}.\end{equation}

Let $or(V)$ denote the $1$-dimensional vector space of functions 
$$ f\colon \{ (v_1,\ldots, v_n)\in V^n\colon  v_1,\ldots, v_n \text{ linearly independent}\}\to \RR$$ with the property
$$ f(Tv_1,\ldots, T v_n)= \operatorname{sign}( \det T) f(v_1, \ldots, v_n)$$ 
for all $T\in \GL(V)$. Observe that there exist canonical isomorphisms 
$$ or(V)\otimes or(V)\simeq \RR$$
and 
$$ D(V) \simeq or (V) \otimes \largewedge^n V^*,$$
where $D(V)$ denotes the space of densities on $V$,  see e.g.\ \cite{Alesker:Fourier}. Hence $(\largewedge^{n} V^*)^{\otimes 2}$ and $D(V)^{\otimes 2}$ are canonically isomorphic. 
Thus, if we fix a density $\vol$ on $V$, we see that $\det h$ of \eqref{eq:deth} can be defined as an element of $ (V/U)^{\otimes (n+1)}$.

Let us specialize the above now to $M=\partial K$, where $K\subset V$ is a convex body with smooth boundary containing the origin in the interior. In this case there  always exists a canonical element in $V/U$, namely $\pi(x)$, and we may identify $ (V/U)^{\otimes (n+1)}$ with $\RR$.  After this identification, $\det h$ becomes a number that we denote by $\wt \kappa_K(x)$. After a choice of inner product such that  $\vol$ coincides with the euclidean density, we can express this quantity in terms of the support function and the usual Gauss curvature as follows:
$$\wt \kappa_K(x) = \langle x, \nu_K(x)\rangle^{-(n+1)} \kappa_K(x).$$
By construction, $\wt\kappa_K$ is invariant under  volume-preserving linear transformations of $K$. 

Also for general convex bodies $K$ containing the origin in the interior there exists an affine invariant construction of $\wt \kappa$. Indeed, instead of $\Delta(K,x,\delta)$, let  for every outward-pointing conormal $\xi$ at $x$ now  $\Delta=\Delta(K,x,\xi,\delta)$ be the number so that the volume of the slab 
$$\{y\in K\colon \langle x,\xi\rangle -\Delta \leq \langle y,\xi\rangle \leq \langle x,\xi\rangle\}$$
is $\delta$. Passing to the limit as before, multiplying by $c_n$, and raising the result to the $(n+1)$-st power, we obtain a number $\kappa_K(x,\xi)$. Clearly, $\kappa_K(x,t \xi) =  t^{n+1} \kappa_K(x, \xi)$ for all $t>0$. Thus, 
$$ \wt \kappa_K (x)= \langle x, \xi\rangle^{-(n+1)} \kappa_K(x,\xi)$$
is well-defined for a.e.\ $x\in \partial K$ and by  construction invariant under  volume-preserving linear transformations.

Let us next define an affine version of $\alpha_K$. As in the discussion above, we view the determinant of a symmetric bilinear form $q\colon V^*\times V^*\to \RR$ and more generally the mixed discriminant of $q_1,\ldots,q_n$  as elements of $(\largewedge^n V)^{\otimes 2}$. 
At points $\xi\neq 0$ where $h_K$ is twice differentiable, the Hessian $\nabla^2 h_K(\xi)$ is always a degenerate bilinear form. Let $x\in K$ be the unique element in $K$ with $\langle x,\xi\rangle = h_K(\xi)$. Define the number
$$ \wt \alpha_K(\xi)= h_K(\xi)^{n-1} D(x\otimes x, \nabla^2 h_K(\xi),\ldots,  \nabla^2 h_K(\xi))$$
after identifying $(\largewedge^n V)^{\otimes 2}\simeq \RR$ using the fixed density $\vol$. 
Thus defined, the function $\wt \alpha\colon V^*\setminus\{0\}\to \RR$ is $0$-homogeneous and clearly invariant under volume-preserving linear transformations of $K$. 
After a choice of inner product such that $\vol$  coincides with  the euclidean density, we can express this affine invariant as
$$ \wt\alpha_K(u) = h_K(u)^{n+1} \cdot \alpha_K(u), \quad u\in S^{n-1}.$$

Let us close this section with the following useful fact. 
\begin{corollary}\label{cor:kappaAlpha}
	Let $K\subset \RR^n$ be a convex body containing the origin in its interior. Then  
	$$ \wt \kappa_K(x) = \wt \alpha_{K^*}(x)$$
	for a.e.\ $x\in \partial K$.
\end{corollary}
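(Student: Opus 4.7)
Both quantities $\wt{\kappa}_K(x)$ and $\wt{\alpha}_{K^*}(x)$ were constructed to be invariant under volume-preserving linear transformations of $K$, so it suffices to fix an inner product on $V$ for which $\vol$ coincides with the euclidean volume and verify the identity in this setting. After this choice, the displayed euclidean formulas of the preceding subsection become available:
$$\wt{\kappa}_K(x) = \langle x, \nu_K(x)\rangle^{-(n+1)} \kappa_K(x), \qquad \wt{\alpha}_{K^*}(u) = h_{K^*}(u)^{n+1}\, \alpha_{K^*}(u)\quad (u\in S^{n-1}).$$

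For a.e.\ $x\in\partial K$, Theorem~\ref{thm:curvRel}(c) applies and yields
$$\kappa_K(x) = \bigl\langle x/|x|,\, \nu_K(x)\bigr\rangle^{n+1} \alpha_{K^*}(x/|x|).$$
Substituting and simplifying the factors of $\langle x,\nu_K(x)\rangle$, I obtain
$$\wt{\kappa}_K(x) = |x|^{-(n+1)} \alpha_{K^*}(x/|x|).$$

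Next, I would rewrite $\wt{\alpha}_{K^*}(x)$. Since $\wt{\alpha}$ is $0$-homogeneous, $\wt{\alpha}_{K^*}(x) = \wt{\alpha}_{K^*}(x/|x|)$, which by the euclidean formula equals $h_{K^*}(x/|x|)^{n+1}\,\alpha_{K^*}(x/|x|)$. The one non-trivial input is the polarity identity $h_{K^*} = \|\cdot\|_K$ (support function of $K^*$ equals the Minkowski gauge of $K$), which gives $h_{K^*}(x) = \|x\|_K = 1$ for every $x\in\partial K$; hence by $1$-homogeneity $h_{K^*}(x/|x|) = 1/|x|$. Plugging this in matches the expression previously obtained for $\wt{\kappa}_K(x)$, and the proof is complete.

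I don't anticipate a genuine obstacle: the argument is a bookkeeping exercise in the powers of $|x|$ together with the two essential inputs (Theorem~\ref{thm:curvRel}(c) and the normalization $h_{K^*}(x)=1$ on $\partial K$). The only subtlety is ensuring that the exceptional null set on $\partial K$ where Theorem~\ref{thm:curvRel} fails and the null set where $h_{K^*}$ fails to be twice differentiable are both discarded, which is automatic since both are statements "for a.e.\ $x\in\partial K$."
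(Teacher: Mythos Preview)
Your proof is correct and follows essentially the same route as the paper: apply Theorem~\ref{thm:curvRel}(c) and then use the polarity identity $h_{K^*}(x/|x|)=1/|x|$ for $x\in\partial K$ (the paper phrases this as $|x|=\rho_K(x/|x|)=h_{K^*}(x/|x|)^{-1}$). The only difference is that you spell out the bookkeeping with the powers of $|x|$ and $\langle x,\nu_K(x)\rangle$ explicitly, whereas the paper leaves this to the reader.
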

\begin{proof}This follows from  Theorem~\ref{thm:curvRel} using the fact that $|x|= \rho_K(x/|x|) = h_{K^*}(x/|x|)^{-1}$ holds for every $x\in \partial K$,  where $\rho_K$ denotes the radial function of $K$. 
\end{proof}

\subsection{Conormal cycle}

Let $\PP_+(V^*)$ denote  the set of rays in $V^*$ emanating from the origin. We can think of a ray as an equivalence class $[\xi]$ for the equivalence relation in $V^*\setminus\{0\}$ defined by $\xi\sim \eta$, if there exists $\lambda>0$ such that $\xi= \lambda \eta$. 

Sufficiently tame closed subsets $A$  of $V$ and   in particular convex bodies  define after a choice of orientation on $V$  an integral current in $\PP_V=V\times \PP_+(V^*)$, the conormal cycle $\nc(A)$. 
As a set, the conormal cycle of convex bodies consists of all pairs $(x,[\xi])$ such that 
$x\in K$ and 
$$ \langle \xi,y-x\rangle \leq 0$$
for all $y\in K$. 

We collect the properties of the conormal cycle relevant to this paper in the following proposition and refer the reader for more information to  \cite{Fu:IGRegularity}.

\begin{proposition}[{\cite{Alesker:VMfdsIII}}]\label{prop:nc} Let $\omega\in or(V)\otimes \Omega^{n-1}(\PP_V)$ be a smooth differential $(n-1)$-form with values in $or(V)$ and let $K$ denote a convex body in $V$.
	The conormal cycle of a convex body has the following properties.
	\begin{enuma}

	\item  The function 
	$$ \phi(K)= \int_{\nc(K)} \omega$$ is a  continuous valuation  and is independent of the choice of orientation of $V$ used to define the conormal cycle.
	\item For fixed $K$, the function
		$$ C_K(U)= \int_{\nc(K)\cap \pi_1^{-1}(U)} \omega,$$
		where $\pi_1\colon \PP_V\to V$ is the projection to the first factor and  $U\subset V$ is a Borel set, is a finite Borel measure. If $K_j\to K$, then $C_{K_j}\to C_K$ weakly. 
	\item As currents, 
	 $$\nc(T K) = \operatorname{sign}(\det T) \cdot T_{*} \nc(K)$$ for every $T\in \GL(V)$, where $T(x,[\xi])= (Tx, [T^{-*} \xi])$.

	\end{enuma}

\end{proposition}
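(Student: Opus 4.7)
The plan is to derive all three properties from the two fundamental facts about the conormal cycle of a convex body: namely, that $\nc(K)$ is a compactly supported integral current depending continuously on $K$ (in the flat norm topology) and that it satisfies the additivity relation
\begin{equation*}
  \nc(K\cup L) + \nc(K\cap L) = \nc(K) + \nc(L) \quad \text{whenever } K\cup L \text{ is convex.}
\end{equation*}
Both are well-established properties of the normal/conormal cycle on convex bodies (or more generally on sets of positive reach) and may be cited from the references already in the paper.

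For part (a), the valuation property is immediate by integrating $\omega$ against the additivity relation. Continuity follows because integration against a smooth differential form is continuous with respect to the flat topology on currents restricted to a fixed compact set, and the conormal cycles of a convergent sequence $K_j \to K$ are all supported in a fixed compact subset of $\PP_V$. For orientation independence, I would observe that changing the orientation of $V$ flips the sign of $\nc(K)$ as an integer-valued current, but since $\omega$ takes values in $or(V)$, its evaluation against a tangent $n$-plane picks up the compensating sign; thus the product $\langle \nc(K), \omega\rangle$ is intrinsic. For part (b), the key point is that the operation of restriction of a current to a Borel set $\pi_1^{-1}(U)$ is well-defined, and its total variation is bounded by the mass of $\nc(K)$, giving a finite signed Borel measure $C_K$. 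The weak convergence $C_{K_j} \to C_K$ again reduces to the flat convergence $\nc(K_j) \to \nc(K)$ together with the fact that for each continuous compactly supported test function $f$ on $V$, the form $(\pi_1^* f)\, \omega$ is continuous and one can approximate by smooth forms to which flat convergence applies.

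For part (c), the pushforward identity follows from a direct set-theoretic computation: a pair $(y, [\eta]) \in \PP_V$ lies in $\nc(TK)$ if and only if $y \in TK$ and $\langle \eta, z - y\rangle \le 0$ for all $z \in TK$; setting $y = Tx$ and $\eta = T^{-*}\xi$ reduces this to the defining inequalities of $\nc(K)$. Hence the map $\bar T(x,[\xi]) = (Tx,[T^{-*}\xi])$ is a diffeomorphism of $\PP_V$ sending $\nc(K)$ bijectively onto $\nc(TK)$ as sets. The sign $\operatorname{sign}(\det T)$ arises because the orientation of $V$ used to orient the underlying integral currents is transported by $T$ with Jacobian $\det T$, so orientations of $\nc(TK)$ and $\bar T_* \nc(K)$ agree up to that sign.

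The main subtlety — and the only point that is not routine — is justifying (b) rigorously: the conormal cycle is not a smooth cycle, so the restriction to $\pi_1^{-1}(U)$ requires the slicing/restriction theory of integer currents, and the weak convergence of the measures $C_{K_j}$ is slightly stronger than flat convergence of the currents would give for an arbitrary Borel $U$. I would handle this by first establishing convergence of $C_{K_j}(f) = \langle \nc(K_j), (\pi_1^* f)\omega\rangle$ for $f \in C_c(V)$ via standard approximation by smooth compactly supported forms and uniform mass bounds on $\nc(K_j)$, which suffices for weak convergence of the measures.
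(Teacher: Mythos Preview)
The paper does not prove this proposition at all: it is stated with the citation \cite{Alesker:VMfdsIII} and no proof is given. So there is no ``paper's own proof'' to compare against; the authors simply import these facts from Alesker's work on valuations on manifolds.

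Your sketch is a reasonable outline of how one would establish the proposition from the standard machinery (additivity and flat continuity of the conormal cycle, restriction/slicing theory for integral currents, and the naturality of $\nc$ under diffeomorphisms). The points you flag as routine---(a) and (c)---are indeed routine given those inputs, and your identification of (b) as the only delicate step is accurate. One small caution on (b): to get that $C_K$ is a non-negative measure (rather than merely a signed one of finite total variation) you need more than a mass bound on $\nc(K)$; in Alesker's treatment this comes from the structure of $\nc(K)$ as an integral current carried by a Lipschitz submanifold, so that $C_K$ is actually integration of a density over $\partial K$ (or over the fibers), which is manifestly non-negative for the forms that arise. Your argument as written would only yield a signed measure. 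Otherwise the proposal is sound, but in the context of this paper no proof is expected---a citation suffices.
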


The following lemma describes the conormal cycle of the polar body of $K$ in terms of the conormal cycle of $K$. 

\begin{lemma}\label{lem:polar}
	Consider the open subset  $\PP_V^+=\{(x,[\xi] )\in \PP_V\colon \langle x,\xi\rangle >0\}$ and let $F_V\colon \PP_V^+ \to \PP_{V^*}^+$ be 
	defined by $$F_V(x,[\xi])= (\frac{\xi}{\langle x,\xi \rangle } , [x]).$$ Then $F_V$ is a diffeomorphism and  $F_{V^*}\circ F_V=\id$ after the canonical identification of $\PP_{V^{**}}^+$ and $\PP_V^+$.		
	For all convex bodies $K\in \calK_0(V)$	 
	\begin{equation}\label{eq:ncPolar}(F_V)_*\nc(K)= \nc(K^*)\end{equation}
as currents.

\end{lemma}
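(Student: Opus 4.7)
The proof proceeds in three stages. First, I would verify directly that $F_V$ is well-defined on $\PP_V^+$ with image in $\PP_{V^*}^+$ and that $F_{V^*}\circ F_V=\id$. Well-definedness follows because $\xi/\langle x,\xi\rangle$ is invariant under positive rescaling of $\xi$; the image lies in $\PP_{V^*}^+$ since $\langle \xi/\langle x,\xi\rangle,x\rangle = 1>0$ under the canonical $V^{**}\simeq V$. Setting $(y,[\eta])=F_V(x,[\xi])$ and choosing the representative $\eta=x$, one computes $\langle y,\eta\rangle=1$, so $F_{V^*}(y,[\eta])=(\eta,[y])=(x,[\xi])$, making $F_V$ a diffeomorphism with inverse $F_{V^*}$.

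Second, I would observe that $\supp\nc(K)\subset\PP_V^+$ for $K\in\calK_0(V)$, which is needed for the pushforward to make sense. Indeed, if $(x,[\xi])\in\nc(K)$, then $x\in\partial K$, $\xi$ is an outer conormal, and $\langle x,\xi\rangle=h_K(\xi)>0$ because $0$ is interior to $K$. Moreover, a representative $(x,[\xi])\in\nc(K)$ normalized so that $\langle x,\xi\rangle=1$ satisfies $\xi\in\partial K^*$ with $x$ an outer conormal (since then $\langle y,\xi\rangle\le 1$ for all $y\in K$ with equality at $y=x$), so $F_V$ maps $\supp\nc(K)$ bijectively onto $\supp\nc(K^*)$.

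For the identity of currents, my strategy is to first establish it for convex bodies with smooth boundary and everywhere strictly positive Gauss curvature, then extend by continuity. I would use that $K\mapsto\nc(K)$ is continuous with respect to Hausdorff convergence (the underlying continuity at the current level is what gives Proposition~\ref{prop:nc}(b), and can be applied to smooth forms to deduce an appropriate continuity for the pushforward) together with the Hausdorff-continuity of polarity on $\calK_0(V)$. For a smooth strictly convex $K$, both $\nc(K)$ and $\nc(K^*)$ are smoothly embedded closed hypersurfaces: $\partial K\ni x\mapsto(x,[\nu_K(x)])$ and $\partial K^*\ni\xi\mapsto(\xi,[\nu_{K^*}(\xi)])$. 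The classical identification $x\mapsto\nu_K(x)/h_K(\nu_K(x))$ is a diffeomorphism $\partial K\to\partial K^*$, and $F_V$ is readily seen to intertwine these two parametrizations, giving equality as unoriented smooth cycles.

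The main obstacle is matching orientations. Both $\nc(K)$ and $\nc(K^*)$ are canonically oriented modulo the sign encoded by orientations of $V$ and $V^*$ (Proposition~\ref{prop:nc}(a,c)), so the identity of currents reduces to a single sign check. I would carry it out in the model case of a euclidean ball $B$, where $B^*=B$ and the parametrization $u\mapsto(u,[u])$ is fixed by $F_V$ under the chosen identifications; once the sign is correct there, continuity in $K$ propagates the identity to all of $\calK_0(V)$.
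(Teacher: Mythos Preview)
Your proposal is correct and follows essentially the same approach as the paper: verify the analytic properties of $F_V$ directly, establish the identity of currents first for convex bodies with smooth and strictly positively curved boundary, and then pass to the general case by approximation and the continuity of the conormal cycle. Your version is considerably more detailed than the paper's terse proof---in particular, your explicit treatment of the orientation issue via a single model check (the euclidean ball) and a continuity argument is a nice way to fill in what the paper leaves implicit in the phrase ``can directly be verified.''
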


\begin{proof}
	The analytic properties of $F_V$ are straightforward to verify. If  the convex body $K\in \calK_0(V)$ has a smooth and strictly positively curved boundary, \eqref{eq:ncPolar} can directly verified. The general case follows by approximation and continuity of the conormal cycle.
\end{proof}

\section{Valuations}

Let $V$ be a finite-dimensional real vector space of dimension $n$ and let  $\calK(V)$ denote the set of convex bodies in $V$, i.e., non-empty convex compact subset of $V$. A valuation is a function $\phi\colon \calK(V)\to A$ with values in an abelian semigroup $A$ such that 
$$ \phi(K\cup L) + \phi(K\cap L)= \phi(K)+ \phi(L)$$
holds whenever $K\cup L$ is convex. In this article, the semigroup will be either the complex numbers with the usual addition or $\calK(W)$, where $W$ is another finite-dimensional real vector space and the  addition on $\calK(W)$ is the Minkowski addition of convex bodies. Valuations of the latter type are called Minkowski valuations.  

Identifying $V\simeq \RR^n$ the  set $\calK(V)$ can be given the  topology induced by the Hausdorff metric. Note that this topology does not depend on the identification $V\simeq \RR^n$. Continuity of  valuations with values in $\CC$ or $\calK(W)$ is always understood with respect to this topology. 

A valuation is translation-invariant if $\phi(K+v) = \phi(K)$ holds for all convex bodies $K$ and $v\in V$.

\subsection{Complex-valued valuations}
The theory of translation-invariant and continuous valuations on convex bodies with values in the complex numbers  is  a rich and highly developed subject. We collect here facts from this theory that will be relevant to us. For more information we refer the reader to  Schneider's book \cite[Chapter 6]{Schneider:BM} and the lecture notes by Alesker~\cite{Alesker:Kent}.
	
First of all, let $\Val(V)$ denote the space of complex-valued, translation-invariant, continuous valuations and let $\Val_i(V)$ denote the subspace of $i$-homogeneous valuations, i.e., those valuations satisfying $\phi(tK)= t^i \phi(K)$ for all $t>0$ and all $K$.  McMullen's decomposition theorem is the statement that every $\phi\in \Val(V)$ can be written uniquely as a sum $ \phi= \phi_0+ \cdots + \phi_n$
of homogeneous valuations $\phi_i\in\Val_i(V)$. 

A  density on $V$ defines by restriction to  convex bodies an element of $\Val_n(V)$. Conversely,  a theorem of Hadwiger shows that every element of $\Val_n(V)$ is  
 of this form.

Since every $0$-homogeneous valuation $\phi$  must be constant, the space $\Val_0(V)$ is one-dimensional. 

McMullen's decomposition theorem can be refined by the splitting into even and odd valuations $\Val_i(V)= \Val^+_i(V)\oplus \Val^-_i(V)$. A valuation is called even if $\phi(-K)= \phi(K)$ and odd if $\phi(-K)= -\phi(K)$ holds for all convex bodies $K$. 

The general linear group acts on valuations by
$$ (T\phi)(K)= \phi(T^{-1}K), \quad T\in \GL(V).$$
Clearly, the subspaces $\Val_i^+(V)$ and $\Val_i^-(V)$ are invariant under this action. A deep theorem of Alesker underlying large parts of modern valuation theory is:

\begin{theorem}[{\cite{Alesker:Irreducibility}}]
Under the action of $\GL(V)$ the spaces $\Val_i^+(V)$ and $\Val_i^-(V)$ are irreducible.
\end{theorem}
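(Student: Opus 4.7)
The plan is to follow Alesker's original strategy. First, a reduction: by McMullen's decomposition, the even/odd splitting, and the fact that both are $\GL(V)$-equivariant, any closed $\GL(V)$-invariant subspace of $\Val(V)$ splits along the $(i,\epsilon)$-grading. Hence it suffices to show that each $\Val_i^\epsilon(V)$ has no proper non-trivial closed $\GL(V)$-invariant subspace. The cases $i\in\{0,n\}$ are immediate since the corresponding spaces are one-dimensional (even) or zero (odd), so only $1\leq i\leq n-1$ requires work. Fix a non-zero closed $\GL(V)$-invariant subspace $W\subset \Val_i^\epsilon(V)$; the goal is $W=\Val_i^\epsilon(V)$.

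For the even case, fix an auxiliary euclidean structure on $V$ and use the Klain embedding $\mathrm{Kl}\colon \Val_i^+(V)\to C(\mathrm{Gr}_i(V))$ defined by noting that the restriction of $\phi\in\Val_i^+(V)$ to an $i$-plane $E$ is an $i$-homogeneous, translation-invariant, continuous valuation of top degree on $E$, hence a multiple of $i$-dimensional Lebesgue measure; the scalar is $\mathrm{Kl}_\phi(E)$. By the Klain--Schneider theorem this map is a continuous injection, and by construction it is $\GL(V)$-equivariant for the natural action on $\mathrm{Gr}_i(V)$. Thus $\mathrm{Kl}(W)$ is a non-zero closed $\GL(V)$-invariant subspace of $\mathrm{Kl}(\Val_i^+(V))$, and the central task becomes showing that the $\GL(V)$-translates of any non-zero Klain function are dense in $\mathrm{Kl}(\Val_i^+(V))$. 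I would combine the transitivity of $\GL(V)$ on $\mathrm{Gr}_i(V)$ with harmonic analysis on the Grassmannian: decompose the space of smooth functions under a maximal compact subgroup $\mathrm{O}(V)$ into isotypic components, use the description of $\mathrm{Kl}(\Val_i^+(V))$ as a particular $\GL(V)$-submodule, and show that non-compact elements of $\GL(V)$ mix the $\mathrm{O}(V)$-types so thoroughly that any non-zero closed $\GL(V)$-invariant subspace must be everything.

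For the odd case, replace the Klain map by Schneider's embedding into a space of signed measures (or currents) on a suitable sphere bundle over $\mathrm{Gr}_{i+1}(V)$, and then run the analogous density argument. Because this embedding is only injective after passing to an appropriate quotient, one must verify that the kernel is $\GL(V)$-stable so that the argument descends. The main obstacle in both cases is the same density statement on the Grassmannian side: characterizing the image of the Klain (respectively Schneider) map precisely enough as a $\GL(V)$-subrepresentation of $C^\infty$ of a homogeneous space, and then establishing that any non-zero closed $\GL(V)$-invariant subspace of it is dense. This is the genuinely deep input and is where representation-theoretic machinery beyond the elementary reductions above is unavoidable; everything else in the sketch is formal or analytic bookkeeping around this core fact.
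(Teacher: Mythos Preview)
The paper does not prove this theorem; it merely states it with a citation to Alesker's original paper and uses it as a black box. There is nothing to compare your proposal against.

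That said, a brief comment on your sketch is warranted. Your reductions (to $1\le i\le n-1$, even/odd splitting, equivariance of the Klain and Schneider embeddings) are correct and standard. However, the ``central task'' you identify---showing that any non-zero closed $\GL(V)$-invariant subspace of the image of the Klain/Schneider map is dense---is essentially a restatement of the irreducibility claim, not a proof of it. Your suggestion to decompose into $\mathrm{O}(V)$-isotypics and argue that non-compact elements ``mix the types thoroughly'' is not how Alesker's proof proceeds and would not by itself yield the result: many reducible smooth $\GL(V)$-representations have this mixing property at the level of $K$-types. Alesker's actual argument uses substantially heavier machinery from the representation theory of real reductive groups (Casselman--Wallach globalization, the structure of admissible representations, and an identification of the relevant $\GL(V)$-module with a specific induced representation), together with a Beilinson--Bernstein-type classification to conclude irreducibility. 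Your final paragraph correctly flags that the density step is the deep input, but the sketch does not supply it.
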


Here irreducible means that there exist no non-trivial closed invariant subspaces and the topology on $\Val(V)$ is the topology of uniform convergence on compact subsets of $\calK(V)$. 

Even valuations are easier to handle due to the following fact. Let $\phi\in \Val^+(V)$ be homogeneous of degree $i\in \{1,\ldots, n-1\}$. The Klain section $\Klain_\phi$ of $\phi$ is a section of the line  bundle over $\Grass_i(V)$,  the Grassmannian of $i$-dimensional linear subspaces in $V$,  with fiber $\Val_i(E)$ defined by 
$$ \Klain_\phi(E)= \phi|_E \in \Val_i(E), \quad E\in \Grass_i(V).$$
By Hadwiger's characterization of volume mentioned above  $\Val_i(E)=D(E)$ and hence a choice of euclidean inner product trivializes the line bundle. Hence after such a choice, we can speak of the Klain function  of $\phi$. The following fundamental fact was proved by Klain.

\begin{theorem}[{\cite{Klain:Even}}] \label{thm:Klain}
	Let $\phi \in \Val_i^+(V)$. If $\Klain_\phi =0$, then $\phi=0$. 
\end{theorem}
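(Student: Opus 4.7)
The plan is to argue by induction on the dimension $n$ of $V$, reducing the general statement to the special classification of simple even valuations.

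The starting observation is that the assumption $\Klain_\phi = 0$ is inherited by restrictions to linear subspaces. Namely, given a linear subspace $E \subset V$ of dimension strictly between $i$ and $n$, the restriction $\phi|_E$ lies in $\Val_i^+(E)$, and its Klain section over $\Grass_i(E)$ is nothing but the restriction of $\Klain_\phi$ to the subgrassmannian $\Grass_i(E) \subset \Grass_i(V)$. Hence $\Klain_{\phi|_E} = 0$ as well.

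The degenerate cases of the induction must be handled first. If $i = n$ there is nothing to prove: the Grassmannian $\Grass_n(V)$ is a single point and $\Klain_\phi(V) \in D(V) = \Val_n(V)$ coincides with $\phi$ itself (after the identification furnished by Hadwiger's theorem on $n$-homogeneous valuations). So assume $1 \leq i \leq n-1$. In the base case $n = i+1$, every hyperplane $E$ has dimension $i$, so the assumption says $\phi$ vanishes on every $K$ contained in some hyperplane; this is precisely the statement that $\phi$ is a simple valuation. In the inductive step ($n \geq i+2$), applying the inductive hypothesis to each hyperplane $H \subset V$ yields $\phi|_H = 0$, and again $\phi$ is simple.

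At this point the problem is reduced to the following classical statement, which is the technical heart of Klain's argument and which I would invoke as a black box: an even, translation-invariant, continuous, simple valuation on $V$ which is homogeneous of degree $i$ with $1 \leq i \leq n-1$ must vanish. (Equivalently: in these degrees, the only simple even valuation is the zero valuation, since the characteristic nonzero example, volume, lives in degree $n$.) Once this is granted, the simplicity of our $\phi$ forces $\phi = 0$, completing the induction.

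The main obstacle is precisely this last ingredient. Its proof does not fit into the present framework; it proceeds by showing that a simple even valuation is determined by its values on parallelepipeds via a Cauchy--Steinhaus type formula, exploiting evenness to cancel contributions from opposite faces, and then using homogeneity to conclude that the degree must equal $n$. The reduction carried out above is robust and essentially formal, so the only nontrivial analytic input is this classification of simple even valuations, which I would cite from Klain's paper rather than reprove.
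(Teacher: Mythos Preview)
The paper does not give its own proof of this theorem; it is quoted as a background result with a citation to Klain's original paper and used as a black box (e.g., in Proposition~\ref{prop:KlainMink} and Proposition~\ref{prop:split}). So there is no in-paper argument to compare against.

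That said, your plan is correct and is essentially how the result is actually proved. The reduction you describe---restricting to hyperplanes, using induction on $n$ to conclude that $\phi$ vanishes on all lower-dimensional convex bodies, and hence that $\phi$ is simple---is exactly the standard route. The ``black box'' you invoke at the end, that a continuous, even, translation-invariant, simple valuation is a constant multiple of volume (and therefore vanishes if it is $i$-homogeneous with $i<n$), is precisely the content of Klain's paper \cite{Klain:Even}; the injectivity of the Klain embedding is really a corollary of that volume characterization, so you have correctly identified where the analytic work lies. One small stylistic remark: your base case and inductive step both terminate in ``$\phi$ is simple, now apply the black box,'' so the induction is really proving only the intermediate claim that $\Klain_\phi=0$ forces $\phi$ to be simple; it might be cleaner to phrase it that way and then apply the simple-valuation classification once at the end.
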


For the proof of Theorem~\ref{thm:mainA} we will need the following consequence of a more precise result due to  Alesker and Bernstein.

\begin{theorem}[{\cite{AleskerBernstein:Range}}] \label{thm:imageKlain}Let $n\geq 4$.  For $k\in \{2,\ldots, n-2\}$ the image of the Klain embedding $\Klain\colon \Val_{i}^+(\RR^n)\to C(\Grass_k(\RR^n))$ is not dense.
\end{theorem}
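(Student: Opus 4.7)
The plan is to derive the statement from the range characterization of the cosine transform proved by Alesker and Bernstein in \cite{AleskerBernstein:Range}. The Klain embedding intertwines the natural $\mathrm{O}(n)$-actions on $\Val_k^+(\RR^n)$ and on $C(\Grass_k(\RR^n))$, and restricted to the dense subspace of smooth even valuations, its image is precisely the image of the cosine transform $T_k\colon C^\infty(\Grass_k(\RR^n))\to C^\infty(\Grass_k(\RR^n))$ (via Alesker's Crofton representation of smooth even valuations, under which the Klain function of the valuation associated to a density $f$ is $T_k f$). The main theorem of \cite{AleskerBernstein:Range} gives the explicit $\mathrm{O}(n)$-isotypic decomposition of $\mathrm{Image}(T_k)$, indexed by Young diagrams satisfying a specific combinatorial condition. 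For $2\leq k\leq n-2$ this is a strict subset of the $\mathrm{O}(n)$-types occurring in $C^\infty(\Grass_k(\RR^n))$, so $\mathrm{Image}(T_k)$ is contained in a proper closed $\mathrm{O}(n)$-invariant subspace $\mathcal R\subset C^\infty(\Grass_k(\RR^n))$.

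From this I would argue by a representation-theoretic separation. Pick a nonzero smooth function $f$ on $\Grass_k(\RR^n)$ lying in an $\mathrm{O}(n)$-isotypic component not contained in $\mathcal R$; such an $f$ exists by the strict inclusion above. Since the Grassmannian is compact, the inclusion $C(\Grass_k(\RR^n))\hookrightarrow L^2(\Grass_k(\RR^n))$ is continuous, and by Schur orthogonality $f$ is $L^2$-orthogonal to every element of $\mathcal R$. A hypothetical uniform approximation of $f$ by elements of $\mathcal R$ would yield an $L^2$ approximation, forcing $\|f\|_{L^2}=0$, a contradiction. By continuity of the Klain embedding and density of smooth valuations in $\Val_k^+(\RR^n)$, the image of $\Klain$ on the full space $\Val_k^+(\RR^n)$ lies in the sup-norm closure of $\mathcal R$, and is therefore not dense in $C(\Grass_k(\RR^n))$.

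The principal obstacle is to translate the result of \cite{AleskerBernstein:Range}, customarily formulated in a $C^\infty$ or Sobolev framework, into the sup-norm setting of the present statement, and to extract from their explicit combinatorial classification the bare qualitative fact that some $\mathrm{O}(n)$-isotypic component is absent from the range precisely when $2\leq k\leq n-2$. Once this strict inclusion is granted, the isotypic orthogonality argument above handles the topological transition cleanly. The hypothesis $2\leq k\leq n-2$ is essential: for $k=1$ and $k=n-1$ the cosine transform is surjective up to a one-dimensional defect, which is why density of the Klain image is available there and corresponds classically to the difference and projection body constructions underlying Theorem~\ref{thm:Ludwig}.
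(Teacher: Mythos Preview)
The paper does not give its own proof of this theorem; it is quoted as a black box from \cite{AleskerBernstein:Range} and used only as input to the proof of Theorem~\ref{thm:mainA}. Your outline is a correct derivation of the stated consequence from the Alesker--Bernstein range characterization of the cosine transform, and this is exactly the content the citation is meant to convey: smooth even valuations are dense in $\Val_k^+(\RR^n)$, their Klain functions are cosine transforms of their Crofton densities, and for $2\le k\le n-2$ the Alesker--Bernstein decomposition shows that certain $\mathrm{O}(n)$-types in $C^\infty(\Grass_k(\RR^n))$ are absent from the range of $T_k$; the $L^2$-orthogonality argument you give then transfers non-density to the sup-norm closure.

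One small inaccuracy in your final paragraph: the statement that for $k=1$ and $k=n-1$ the cosine transform is ``surjective up to a one-dimensional defect'' is not correct. On even functions on the sphere the cosine transform is a multiplier operator with all multipliers nonzero, hence injective with dense image, but it is not surjective on $C^\infty$ and there is no finite-dimensional obstruction. What matters for the paper (and what is used in the proof of Proposition~\ref{prop:dense}) is only the density of the image in those boundary cases, which is indeed the classical injectivity of the spherical cosine transform. This slip does not affect your main argument.
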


\subsection{Minkowski valuations}

Given two finite-dimensional vector spaces $V,W$ we denote by $\MVal(V,W)$ the set of translation-invariant and continuous  Minkowski valuations $\calK(V)\to \calK(W)$. Note that this set is not a vector space, but merely a convex cone. 

A Minkowski valuation is called even if $\Phi(-K)= \Phi(K)$ holds for all convex bodies $K$ in $V$; $\Phi$ is called homogeneous of degree $\alpha$ if $\Phi(tK) = t^\alpha \Phi(K)$ holds for all $t>0$ and  all  $K$. If $\Phi$ is continuous and translation-invariant, then by McMullen's decomposition theorem, $\alpha$ is necessarily an integer between $0$ and $n=\dim V$.  We denote by $\MVal_i(V)$ the subset of $i$-homogeneous Minkowski valuations. 

Since $K \mapsto h_{\Phi K}(\eta)$ is a real-valued, continuous valuation for fixed $\eta \in W^*,$ Hadwiger's characterization of $n$-homogeneous valuations yields the following statement about $n$-homogeneous Minkowski valuations. 
\begin{lemma}\label{lemma:nHomMink}  Let $\vol$ denote a positive  
	density  on $V$. Given $\Phi\in \MVal_n(V,W)$, there
	 exists a convex body $L$ in $W$ such that 
	$$ \Phi(K)= \vol(K) L$$
	holds for all $K\in \calK(V)$. 
\end{lemma}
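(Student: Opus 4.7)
The plan is to reduce the statement to the one-dimensional case by evaluating support functions in each direction $\eta\in W^*$. Fix $\eta$ and define
\begin{equation*}
\phi_\eta(K) = h_{\Phi K}(\eta).
\end{equation*}
Since $h_{A+B}=h_A+h_B$ and $\Phi$ is a Minkowski valuation, $\phi_\eta$ is a real-valued valuation; continuity and translation-invariance of $\phi_\eta$ are inherited from $\Phi$. From $h_{tA}=t h_A$ for $t\geq 0$ and from the $n$-homogeneity of $\Phi$, I get $\phi_\eta(tK) = t^n \phi_\eta(K)$, so $\phi_\eta\in \Val_n(V)$.

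By Hadwiger's characterization of $n$-homogeneous translation-invariant continuous valuations recalled in the excerpt (i.e.\ $\Val_n(V)=D(V)$), after fixing the density $\vol$ there is a scalar $c(\eta)$ such that
\begin{equation*}
h_{\Phi K}(\eta) = c(\eta)\,\vol(K) \qquad \text{for all } K\in\calK(V).
\end{equation*}
I would then pin down $c$ by choosing once and for all some $K_0\in\calK(V)$ with $\vol(K_0)=1$; evaluating the identity above at $K_0$ yields $c(\eta) = h_{\Phi K_0}(\eta)$.

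Setting $L := \Phi K_0 \in \calK(W)$, the function $\eta\mapsto c(\eta)=h_L(\eta)$ is sublinear, so it is the support function of the convex body $L$. Consequently
\begin{equation*}
h_{\Phi K}(\eta) = \vol(K)\, h_L(\eta) = h_{\vol(K) L}(\eta)
\end{equation*}
for every $\eta\in W^*$, and since a convex body in $W$ is uniquely determined by its support function, $\Phi K = \vol(K) L$. The only point that needs a brief comment is the degenerate case $\vol(K)=0$: there the identity forces $h_{\Phi K}\equiv 0$, i.e.\ $\Phi K=\{0\}=0\cdot L$, so the formula persists. I do not anticipate a genuine obstacle here; the content of the lemma is essentially Hadwiger's theorem, applied one direction $\eta$ at a time, together with the observation that the resulting family of scalars is automatically a support function because each $h_{\Phi K}$ is.
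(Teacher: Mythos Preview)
Your proof is correct and follows exactly the approach the paper indicates: the paper does not give a detailed proof of this lemma, but introduces it with the sentence ``Since $K \mapsto h_{\Phi K}(\eta)$ is a real-valued, continuous valuation for fixed $\eta \in W^*,$ Hadwiger's characterization of $n$-homogeneous valuations yields the following statement.'' Your argument fleshes out precisely this, including the clean identification $L=\Phi K_0$ for a body of unit volume.
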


We denote by $\MVal^+_i(V,W)$ the subset of even Minkowski valuations. 
The notion of Klain section can be extended to Minkowski valuations as follows.
Given $\Phi\in \MVal_i^+(V,W)$ we define the Klain section of $\Phi$ as the map $\Klain_\Phi \colon \Grass_i(V)\to \bigsqcup_{E\in \Grass_i(V)} \MVal^+_i(E,W)$, where $\bigsqcup$ stands for disjoint union and   
$$ \Klain_\Phi(E) = \Phi|_E\in \MVal_i^+(E,W).$$
In view of Lemma~\ref{lemma:nHomMink}, given a choice of positive Lebesgue measure $\vol_E$ in each subspace $E$, we may think of the Klain section as a map 
$  \Grass_i(V) \to \calK(W)$.

As before, by considering the real-valued valuations $K\mapsto h_{\Phi K}(\eta)$, $\eta\in W^*$, Theorem~\ref{thm:Klain} can be  immediately generalized to Minkowski valuations.
\begin{proposition}\label{prop:KlainMink}
	Let $\Phi,\Psi\in \MVal_i^+(V,W)$. If $ \Klain_\Phi=\Klain_\Psi$, then $\Phi=\Psi$.
\end{proposition}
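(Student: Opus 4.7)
My plan is to reduce the problem to the scalar case (Theorem~\ref{thm:Klain}) by evaluating the support functions of $\Phi K$ and $\Psi K$ at an arbitrary linear functional $\eta \in W^*$, thereby separating the vector-valued statement into a family of real-valued ones. Since a convex body in $W$ is determined by its support function, it suffices to show that $h_{\Phi K}(\eta) = h_{\Psi K}(\eta)$ for every $K \in \calK(V)$ and every $\eta \in W^*$.

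So fix $\eta \in W^*$ and define $\phi_\eta, \psi_\eta \colon \calK(V) \to \RR$ by $\phi_\eta(K) = h_{\Phi K}(\eta)$ and $\psi_\eta(K) = h_{\Psi K}(\eta)$. Since $h_{A+B}(\eta) = h_A(\eta) + h_B(\eta)$, the Minkowski valuation property of $\Phi$ (resp.\ $\Psi$) translates into the valuation property of $\phi_\eta$ (resp.\ $\psi_\eta$) with values in $\RR$. Translation invariance, continuity, $i$-homogeneity, and evenness are all inherited from the corresponding properties of $\Phi$ and $\Psi$ in an immediate way, using that $h_{\Phi(tK)}(\eta) = t^i h_{\Phi K}(\eta)$ when $\Phi$ is $i$-homogeneous and $h_{\Phi(-K)}(\eta) = h_{\Phi K}(\eta)$ when $\Phi$ is even. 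Hence $\phi_\eta, \psi_\eta \in \Val_i^+(V)$.

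Next I would unpack the hypothesis $\Klain_\Phi = \Klain_\Psi$ in terms of Klain functions. Using Lemma~\ref{lemma:nHomMink} and the identification mentioned after it, for every $E \in \Grass_i(V)$ the hypothesis gives a convex body $L_E \in \calK(W)$ with $\Phi(K) = \vol_E(K) L_E = \Psi(K)$ for every $K \subset E$. Evaluating support functions at $\eta$ yields $\phi_\eta(K) = \vol_E(K)\, h_{L_E}(\eta) = \psi_\eta(K)$ for all such $K$, i.e.\ $\phi_\eta|_E = \psi_\eta|_E$ as elements of $\Val_i(E)$. This precisely says that the scalar Klain functions of $\phi_\eta$ and $\psi_\eta$ coincide. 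By Theorem~\ref{thm:Klain} applied to $\phi_\eta - \psi_\eta \in \Val_i^+(V)$, we conclude $\phi_\eta = \psi_\eta$, and since $\eta \in W^*$ was arbitrary, $h_{\Phi K} = h_{\Psi K}$ on $W^*$ for every $K$, giving $\Phi = \Psi$.

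There is no real obstacle here; the proof is essentially a translation of Klain's injectivity theorem via support functions. The only small point requiring care is the transition from the Minkowski-valued Klain section $E \mapsto \Phi|_E \in \MVal_i^+(E,W)$ to the scalar Klain function of $\phi_\eta$, which rests on the $n$-homogeneity identification $\Val_i(E) \simeq D(E)$ from Hadwiger's theorem.
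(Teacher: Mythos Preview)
Your proof is correct and is exactly the approach the paper takes: the paper simply remarks that by considering the real-valued valuations $K\mapsto h_{\Phi K}(\eta)$ for $\eta\in W^*$, Theorem~\ref{thm:Klain} immediately yields the Minkowski version. You have faithfully unpacked this one-line reduction, and the details you supply (verifying $\phi_\eta,\psi_\eta\in\Val_i^+(V)$ and matching their Klain functions via Lemma~\ref{lemma:nHomMink}) are all sound.
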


\section{Representation theory of the general linear group} \label{sec:repTheory}

The proof of Theorem~\ref{thm:mainA} will also rely on several facts from the representation theory of the general linear group over the real numbers. The  results we need hold in fact more generally for fields of  characteristic zero. Since the restriction to real numbers does not lead to any simplification or greater transparency,  we present these results for a general  field  $\FF$ of characteristic zero.  Our references for this material are the book by Fulton and Harris  \cite{FultonHarris} and the excellent lecture notes by Kraft and Procesi \cite{KraftProcesi}.

\subsection{Rational representations}

 Let $V,W$ be finite-dimensional vector spaces over $\FF$ and let $n=\dim(V).$ We write $\mathrm{End}(W)$ for the space of endomorphisms on $W$. We denote by $\GL(V)\subset \End(V)$ the general linear group and we write $\GL_n(\FF)$ for the group of invertible $n\times n$ matrices with coefficients in $\FF$. 
 
 A function $f \colon \mathrm{GL}(V) \to \FF$ is called polynomial if there is a polynomial  $\tilde{f} \colon  \mathrm{End}(V)\to \FF$ such that $f = \tilde{f}|_{\mathrm{GL}(V)}$;  $f$ is called regular if $\det^r  \cdot f$, where $\det \colon \mathrm{End}(V)\to \FF$ denotes the determinant, is
 polynomial for some $r \in \NN.$

\begin{definition}	A representation $\rho \colon \mathrm{GL}(V) \to \mathrm{GL}(W)$ is called polynomial (resp. rational) if for one---and hence every---basis of $W$ the matrix coefficients $\rho_{ij}$ are polynomial (resp. regular) functions on $\mathrm{GL}(V).$ 
\end{definition}

\begin{example}
 	The standard representation $\rho \colon \mathrm{GL}(V) \to \mathrm{GL}(V)$ is clearly polynomial. Its dual  representation $V^*$ however is rational. Moreover, $\det^r \colon \GL(V)\to \GL_1(\FF)$ is rational for every $r\in \ZZ$. 
\end{example}
\begin{example}
	If $W_1,W_2$ are polynomial (resp.\ rational) representations of $\mathrm{GL}(V),$ then $W_1 \oplus W_2$ and $W_1 \otimes W_2$ are polynomial (resp.\ rational) representations of $\mathrm{GL}(V).$ 
	\end{example}
\begin{example}
 If $W$ is a polynomial (resp.\ rational) representation of $\mathrm{GL}(V),$ then clearly any subrepresentation $U$ is a polynomial (resp.\ rational) representation of $\mathrm{GL}(V).$
\end{example}

Note that there exist natural representations that are not rational.

\begin{example} For $F=\RR$ the  representation $|\det|\colon \GL(V)\to \GL_1(\RR)$ is not rational.
\end{example}

 Here are two important properties of rational representations of the general linear group.
 
 \begin{proposition}[{\cite[Corollary 5.3.2]{KraftProcesi}}]\label{prop:TensorPow}
 	Every irreducible polynomial representation of $\GL(V)$ is isomorphic to a subrepresentation of a unique  tensor power $V^{\otimes m}$. 
 \end{proposition}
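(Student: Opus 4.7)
The plan is to reduce to the homogeneous case by diagonalizing the action of the center, handle uniqueness by tracking how scalars act, and then invoke Schur--Weyl duality (or its underlying density argument) for existence.

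First, I would establish a degree decomposition. Writing $\rho\colon\GL(V)\to\GL(W)$ for the representation, the restriction of $\rho$ to the center $\FF^{*}\cdot\mathrm{id}_V$ is given by matrix coefficients $\rho_{ij}(\lambda\,\mathrm{id}_V)$ that are polynomials in $\lambda$, so this action is locally finite and (since $\mathrm{char}\,\FF=0$) simultaneously diagonalizable. This yields a decomposition
\[
W \;=\; \bigoplus_{m\ge 0} W_m, \qquad W_m=\{w\in W : \rho(\lambda\,\mathrm{id}_V)w=\lambda^{m}w \text{ for all }\lambda\in\FF^{*}\}.
\]
Because $\FF^{*}\cdot\mathrm{id}_V$ is central, each $W_m$ is $\GL(V)$-invariant, and by construction the restriction of $\rho$ to $W_m$ is again polynomial, of matrix coefficients homogeneous of degree $m$ in the entries. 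If $W$ is irreducible there is a unique $m$ with $W=W_m$.

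Next comes uniqueness. Since $\lambda\,\mathrm{id}_V$ acts on $V^{\otimes m'}$ by the scalar $\lambda^{m'}$, any non-zero subrepresentation of $V^{\otimes m'}$ lies in its degree-$m'$ component. An irreducible polynomial representation homogeneous of degree $m$ therefore cannot embed in $V^{\otimes m'}$ for $m'\neq m$, which gives the uniqueness of the tensor power.

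For existence I would invoke Schur--Weyl duality, viewing $V^{\otimes m}$ as a $(\GL(V),S_m)$-bimodule, where $S_m$ permutes the tensor factors. The duality gives a multiplicity-free decomposition
\[
V^{\otimes m} \;\cong\; \bigoplus_{\lambda\vdash m,\ \ell(\lambda)\le n} S_{\lambda}V\otimes U_{\lambda},
\]
with pairwise non-isomorphic irreducible polynomial $\GL(V)$-modules $S_{\lambda}V$ (the Schur functors) and the irreducible $S_m$-modules $U_{\lambda}$. It remains to match: every irreducible polynomial representation $W$ of degree $m$ is isomorphic to some $S_\lambda V$. The cleanest argument uses that the regular functions on $\GL(V)$ homogeneous of degree $m$ in the matrix entries form the $\GL(V)\times\GL(V)$-submodule corresponding to $V^{\otimes m}\otimes(V^{\otimes m})^{*}$ of $\FF[\GL(V)]$; the matrix coefficients of $W$ span a non-zero subbimodule isomorphic to $W\otimes W^{*}$, so $W$ must appear as a $\GL(V)$-isotypic component of $V^{\otimes m}$.

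The routine steps are the central decomposition and the uniqueness, which use only that polynomial matrix coefficients are finite sums of homogeneous pieces. The real content of the proposition is Schur--Weyl duality itself, or equivalently the fact that the $S_\lambda V$ exhaust the irreducible polynomial representations of $\GL(V)$ of degree $m$; this is where I would defer to \cite{KraftProcesi}.
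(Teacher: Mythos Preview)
The paper does not supply a proof of this proposition: it is stated with a bare citation to \cite[Corollary~5.3.2]{KraftProcesi} and used as a black box thereafter. So there is no argument in the paper to compare against, and your sketch stands on its own.

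Your outline is the standard route and is essentially correct. The decomposition by the action of the center and the resulting uniqueness of $m$ are routine, as you say. For existence, the matrix-coefficient argument is the right idea, but the sentence ``the regular functions on $\GL(V)$ homogeneous of degree $m$ form the $\GL(V)\times\GL(V)$-submodule corresponding to $V^{\otimes m}\otimes (V^{\otimes m})^*$'' is slightly off: those functions are $\Sym^m(\End(V)^*)\cong \Sym^m(V\otimes V^*)$, a \emph{quotient} of $V^{\otimes m}\otimes (V^*)^{\otimes m}$, not equal to it. This does not affect the conclusion---by complete reducibility $W$ still embeds in a direct sum of copies of $V^{\otimes m}$ and hence in $V^{\otimes m}$ itself---but it is worth stating precisely. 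Alternatively, one can bypass Schur--Weyl entirely and argue directly that the map $w\mapsto (g\mapsto \rho(g)w)$ embeds $W$ into the degree-$m$ polynomial maps $\GL(V)\to W$, which sit inside $(V^{\otimes m})^{\dim W}$ as a $\GL(V)$-module; this is closer to how Kraft--Procesi actually phrase it.
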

A representation $W$ is called completely reducible if it is a sum of irreducible subrepresentation of $W$.

\begin{proposition}[{\cite[Corollary 5.3.1]{KraftProcesi}}] \label{prop:comp_red}
	Every rational representation of $\GL(V)$ is completely reducible.
\end{proposition}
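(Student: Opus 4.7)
The plan is to invoke Weyl's unitarian trick combined with the Zariski density of the maximal compact subgroup in the complex general linear group. Since a rational representation $\rho$ is defined by finitely many regular functions, its data lie in a finitely generated extension of $\QQ$, which embeds into $\CC$. Thus I would first reduce to $\FF = \CC$, extending scalars and later descending the isotypic decomposition by taking Galois-invariant pieces.

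Working over $\CC$, I would fix a Hermitian inner product on $V$ and let $K = U(V) \subset \GL(V)$ denote the corresponding unitary group. The composition $\rho|_K \colon K \to \GL(W)$ is a continuous representation of a compact group on a finite-dimensional complex vector space, and so a standard Haar-measure averaging argument produces a $K$-invariant Hermitian inner product on $W$; taking orthogonal complements with respect to this inner product yields complete reducibility of $W$ as a $K$-representation.

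The crucial final step is to promote $K$-invariance to $\GL(V)$-invariance. Here I would use that $K = U(n)$ is Zariski-dense in $\GL_n(\CC)$: any regular function on $\GL_n(\CC)$ vanishing on $K$ must vanish identically, because $K$ is a totally real submanifold of real dimension $n^2$ in a complex variety of the same complex dimension. Given a $K$-invariant subspace $U \subset W$, the set $\{g \in \GL(V) : \rho(g) U \subset U\}$ is Zariski-closed (cut out by regular functions built from the matrix coefficients of $\rho$) and contains $K$, hence it coincides with $\GL(V)$.

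The main obstacle I expect is the descent from $\CC$ back to an arbitrary $\FF$ of characteristic zero, which, while standard, requires some care about fields of definition. A cleaner, purely algebraic alternative that bypasses this issue entirely is to differentiate $\rho$ to a Lie algebra representation $d\rho \colon \mathfrak{gl}(V) \to \mathfrak{gl}(W)$, apply Weyl's theorem on complete reducibility of finite-dimensional representations of the semisimple Lie algebra $\mathfrak{sl}(V)$ in characteristic zero, and then observe that the center $\FF \cdot I_V \subset \mathfrak{gl}(V)$ acts by a scalar on each $\mathfrak{sl}(V)$-irreducible summand by Schur's lemma, so that these summands are genuine $\mathfrak{gl}(V)$-subrepresentations. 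The final identification of $\mathfrak{gl}(V)$-invariant with $\GL(V)$-invariant subspaces, valid for rational representations, then yields the claim.
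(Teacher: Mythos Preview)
The paper does not prove this proposition at all: it is stated with a citation to \cite[Corollary 5.3.1]{KraftProcesi} and then used as a black box, so there is no ``paper's own proof'' to compare against.

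Your two sketched approaches are both standard and essentially correct. A couple of small remarks. In the unitarian-trick route, the descent from $\CC$ back to an arbitrary characteristic-zero field $\FF$ is cleaner if phrased as ``semisimplicity of a module is preserved and reflected under faithfully flat base change'' rather than via Galois-invariant isotypic pieces, since the intermediate field need not be Galois over $\FF$. In the Lie-algebra route, Schur's lemma over a non-algebraically-closed $\FF$ only tells you that $I_V$ acts on an irreducible $\mathfrak{sl}(V)$-summand through the division algebra $\End_{\mathfrak{sl}(V)}(U)$, not literally by a scalar; this is still enough to conclude $I_V$ preserves $U$, which is all you need. Alternatively, first decompose $W$ under the rational action of the central torus $\FF^\times \subset \GL(V)$ into its integer-weight pieces---these are automatically $\GL(V)$-stable---and then apply Weyl's theorem to $\mathfrak{sl}(V)$ on each piece; this sidesteps the Schur issue entirely.
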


\subsection{Weight spaces}
\label{sec: weight spaces}
As before let $V$ denote a vector space over $\FF$ and let $n = \dim(V).$ In this subsection we fix a basis $b_1 , \dots , b_n$ of $V.$ After this choice we can identify $V$ with $\FF^n.$ Denote by 
$$ T^n := \left\{\begin{pmatrix}
	t_1 & & \\
	& \ddots & \\
	& & t_n
\end{pmatrix} \mid t_i \in \FF\setminus\{0\} \right\} \subset \mathrm{GL}_n(\FF) $$
the $n$-dimensional torus. Further, let $\varepsilon_i \colon T^n\to \GL_1(\FF)$ be the element defined by 
$$\varepsilon_i\left( \begin{pmatrix}
	t_1 & & \\
	& \ddots & \\
	& & t_n
\end{pmatrix} \right) = t_i. $$
The $1$-dimensional rational representations of $T^n$ are all of the form
$$ (p_1\varepsilon_1 + \dots + p_n \varepsilon_n)\left( \begin{pmatrix}
	t_1 & & \\
	& \ddots & \\
	& & t_n
\end{pmatrix} 
\right) = t_1^{p_1}\cdots t_n^{p_n}$$
for $p_i \in \ZZ$. 
These representations constitute  the character group of $T^n$  which we denote by $\mathcal X(T^n).$

The weight space decomposition of a  rational representation $W$ is
$$ W = \bigoplus\limits_{\lambda \in \mathcal X(T^n)} W_\lambda, $$
where
$$ W_\lambda := \{w \in W \mid \rho(t)w = \lambda(t) w, \forall t \in T^n\}. $$
If $W_\lambda \neq 0$, we call $\lambda$ a weight and $W_\lambda $ the corresponding weight space. If $w \in W_\lambda$ is non-zero we say $w$ is a weight vector for $\lambda.$

Denote by $U_n$ the subgroup of  unipotent upper triangular  matrices
$$ \begin{pmatrix}
	1 & \ast & \ast\\
	& \ddots & \ast \\
	&  &1
\end{pmatrix}. $$
The group $U_n$ is generated by the matrices 
$$ u_{ij}(s) = \id + s E_{ij}, \quad j>i, $$
where  $s\in \FF$ and  $E_{ij}$ is the matrix with $1$ at position $(i,j)$ and zeros otherwise. The following lemma describes the action of $U_n$ on weight vectors.

\begin{lemma}[{\cite[§5.7]{KraftProcesi}}]
	\label{lemma: Un action on weight vectors}
	Let $\lambda$ be a weight of $W$ and $w \in W_\lambda$ a weight vector. There are elements $w_k \in W_{\lambda+k(\varepsilon_i - \varepsilon_j)} $ for $k\in \NN,$ where $w_0  = w,$ such that
	$$ \rho(u_{ij}(s))w = \sum\limits_{k\geq 0} s^k w_k, \quad s\in \FF. $$
\end{lemma}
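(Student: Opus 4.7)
My plan is to exploit rationality of $\rho$ together with the conjugation behaviour of $u_{ij}(s)$ under the torus $T^n$. The candidate elements $w_k$ will arise as coefficients in a Taylor expansion in $s$, and the weight assertion will fall out of comparing $s^k$-coefficients after conjugating by a diagonal element.

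First I would verify that $s\mapsto \rho(u_{ij}(s))w$ is polynomial in $s$ (not merely regular). The matrix coefficients $\rho_{kl}$ are regular on $\GL(V)$, so $\det^{r}\rho_{kl}$ is a polynomial function on $\End(V)$ for some $r\in\NN$. Since $\det u_{ij}(s)=1$ for every $s\in\FF$, the composition $s\mapsto \rho_{kl}(u_{ij}(s))$ is a polynomial in $s$. Hence there is a finite expansion
\[\rho(u_{ij}(s))w \;=\; \sum_{k\geq 0} s^k w_k, \qquad w_k\in W,\]
and setting $s=0$ gives $w_0=w$.

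The second step is to identify the weights of the $w_k$. A direct calculation shows $t E_{ij} t^{-1} = (t_i t_j^{-1}) E_{ij}$ for $t\in T^n$, and therefore $t\, u_{ij}(s)\, t^{-1} = u_{ij}(t_i t_j^{-1} s)$. Applying $\rho(t)$ to the expansion from the left, using this conjugation identity on the right, and $\rho(t)w=\lambda(t)w$, I obtain
\[\sum_{k\geq 0} s^k \rho(t) w_k \;=\; \rho(t\,u_{ij}(s)\,t^{-1})\rho(t)w \;=\; \lambda(t)\sum_{k\geq 0} (t_i t_j^{-1})^k s^k w_k.\]
Comparing coefficients of $s^k$ (valid because both sides are polynomials in $s$) yields
\[\rho(t) w_k \;=\; \lambda(t)\,(t_i t_j^{-1})^k\, w_k \;=\; \bigl(\lambda + k(\varepsilon_i - \varepsilon_j)\bigr)(t)\, w_k\]
for every $t\in T^n$, which is precisely the assertion $w_k \in W_{\lambda + k(\varepsilon_i-\varepsilon_j)}$.

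The only subtle point is the polynomiality in $s$: without the observation that $\det u_{ij}(s)=1$, rationality of $\rho$ would a priori only furnish a rational function in $s$, and the coefficient comparison would not be meaningful. Once this is in hand, the argument is a short and essentially mechanical computation, and I do not anticipate any further obstacle.
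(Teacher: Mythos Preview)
Your argument is correct. The paper does not give its own proof of this lemma; it simply cites \cite[\S5.7]{KraftProcesi}. Your approach---polynomiality of $s\mapsto\rho(u_{ij}(s))w$ via $\det u_{ij}(s)=1$, followed by the torus conjugation $t\,u_{ij}(s)\,t^{-1}=u_{ij}(t_i t_j^{-1}s)$ and coefficient comparison---is exactly the standard argument one finds in Kraft--Procesi, so there is nothing further to compare.
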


The weights of the form $\sum_{i<j} n_{ij} (\varepsilon_i - \varepsilon_j )$
 where $n_{ij}\geq  0$ are
called non-negative weights. 
We define a partial ordering on the weights by setting
$$\lambda \succeq \mu \text{ if and only if }\lambda - \mu \text{ is non-negative}.$$
Further we write $\lambda \succ \mu$ if $\lambda \succeq \mu$ and $\mu \neq \lambda$.
A weight $\lambda=\sum_{i=1}^n p_i \varepsilon_i$ with $p_1\geq p_2\geq \cdots \geq p_n$ is called dominant.

\begin{theorem}[Theorem of the highest weight]\label{thm:hwt}
	Let $W$ be an irreducible rational representation of $\GL_n(F)$. 
	\begin{enuma} \item The space 
	$$W^{U_n} = \{w \in W \mid uw = w \text{ for all } u\in U_n\}$$
	is a $1$-dimensional weight space $W_\lambda$ and all other weights $\mu$ satisfy $\mu \prec \lambda$. 
	The weight $\lambda$ is called the highest weight of $W$. 
	\item A weight $\lambda=\sum_{i=1}^n p_i \varepsilon_i$ is the highest weight of an irreducible $\GL_n(\FF)$ represenation if and only if $\lambda$ is  dominant. Moreover, $W$ is polynomial if and only if $p_n\geq 0$. 
	\item Two irreducible rational representations of $\GL_n(F)$ are isomorphic if and only if they have the same highest weight. 
	\end{enuma}

\end{theorem}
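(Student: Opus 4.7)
The plan is to follow the classical development of the theorem of the highest weight, adapted to rational representations of $\GL_n(\FF)$ over a field of characteristic zero.

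For part (a), I would first show $W^{U_n}\neq 0$ by a Lie--Kolchin/Engel argument: the unipotent group $U_n$ is connected and nilpotent, acts by unipotent operators on $W$, and therefore has a common non-zero fixed vector. Since $T^n$ normalizes $U_n$ (as $t\,u_{ij}(s)\,t^{-1}=u_{ij}(t_it_j^{-1}s)$), the subspace $W^{U_n}$ is $T^n$-stable and decomposes into weight spaces. Pick a weight vector $v\in W^{U_n}$ of weight $\lambda$. The Gauss/Bruhat factorization $\GL_n=U_n^-T^nU_n$ on the big open cell together with the irreducibility of $W$ gives $W=\mathrm{span}(U_n^-T^n\cdot v)$. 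Applying Lemma~\ref{lemma: Un action on weight vectors} and its obvious analogue for the lower unipotent $u_{ji}(s)$, $j>i$ (which shifts weights by $-(\varepsilon_i-\varepsilon_j)$), every weight of $W$ has the form $\lambda-\sum_{i<j}n_{ij}(\varepsilon_i-\varepsilon_j)$ with $n_{ij}\in\NN$, hence $\preceq\lambda$. The same argument applied to any other $\lambda'$ with $W^{U_n}_{\lambda'}\neq 0$ forces $\lambda'=\lambda$, so $W^{U_n}\subseteq W_\lambda$; the reverse inclusion is immediate since for $v\in W_\lambda$ the expansion $u_{ij}(s)v=v+\sum_{k\geq 1}s^kw_k$ would require weight vectors in strictly higher weight spaces, which do not exist. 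For the $1$-dimensionality of $W_\lambda$, I use $W=\mathrm{span}(U_n^-\cdot v)$ for any non-zero $v\in W_\lambda$: since $U_n^-$ strictly lowers weights except in degree zero, the intersection $W_\lambda\cap\mathrm{span}(U_n^-\cdot v)$ equals $\FF v$.

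For part (b), the necessity of dominance comes from the permutation matrix $w_i\in\GL_n$ swapping $b_i$ with $b_{i+1}$: applied to a highest weight vector, it produces a vector of weight $s_i\lambda=\lambda-(p_i-p_{i+1})(\varepsilon_i-\varepsilon_{i+1})$, and the constraint $s_i\lambda\preceq\lambda$ from part (a) forces $p_i\geq p_{i+1}$. For sufficiency, given dominant $\lambda$, I would consider
\[
\bigotimes_{k=1}^{n-1}(\wedge^k V)^{\otimes(p_k-p_{k+1})}\otimes(\wedge^n V)^{\otimes p_n},
\]
with the last factor interpreted as $(\wedge^n V^*)^{\otimes(-p_n)}$ when $p_n<0$. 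The pure tensor of highest weight vectors $b_1\wedge\cdots\wedge b_k$ is $U_n$-fixed of weight $\lambda$; Proposition~\ref{prop:comp_red} then extracts an irreducible summand containing this vector and hence of highest weight $\lambda$. For the polynomial criterion, Proposition~\ref{prop:TensorPow} embeds any polynomial irreducible $W$ in $V^{\otimes m}$, whose weights all have non-negative coordinates, forcing $p_n\geq 0$; conversely, for $p_n\geq 0$ the construction above uses only exterior powers of $V$ and is polynomial.

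For part (c), let $W_1,W_2$ be irreducible of the same highest weight $\lambda$ with highest weight vectors $v_1,v_2$. Both are cyclic $\GL_n$-modules generated by a $B$-eigenvector (with $B=T^nU_n$) of the same character $\lambda$. The standard universality argument---either by realizing both as the unique irreducible quotient of the induced module $\mathrm{Ind}_B^{\GL_n}\FF_\lambda$ (Borel--Weil), or by observing that the annihilator ideal of a highest weight vector in $\FF[\GL_n]$ is determined by the weight and the universal relations $uv=v$ ($u\in U_n$) and $tv=\lambda(t)v$ ($t\in T^n$)---yields a surjective $\GL_n$-equivariant map $W_1\to W_2$, which is an isomorphism by Schur's lemma. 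The converse direction (isomorphic reps have equal highest weights) is immediate. The most delicate step I expect is the $1$-dimensionality of $W_\lambda$ in part (a), which underpins both the uniqueness assertion in (c) and the identification of irreducibles by a single dominant weight. The remaining ingredients---Lie--Kolchin, Lemma~\ref{lemma: Un action on weight vectors}, the Gauss/Bruhat decomposition, and Propositions~\ref{prop:TensorPow} and~\ref{prop:comp_red}---are classical and plug in without substantial difficulty.
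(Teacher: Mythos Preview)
The paper does not prove this theorem: it is stated as classical background in Section~\ref{sec: weight spaces}, with the books \cite{FultonHarris} and \cite{KraftProcesi} cited as general references, and no proof is given. So there is no ``paper's own proof'' to compare against.

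Your outline is a correct sketch of the standard argument. Two remarks on points that deserve a line of justification if you flesh this out. In part~(a), the passage from ``the big cell $U_n^-T^nU_n$ is Zariski dense'' to ``$W=\mathrm{span}(U_n^-T^n\cdot v)$'' uses that the matrix coefficients of a rational representation are regular functions, so a relation holding on a dense open set holds everywhere; this is true but should be said. In part~(c), your appeal to Borel--Weil or to annihilator ideals is heavier machinery than needed. A self-contained finish in the spirit of your part~(a): in $W_1\oplus W_2$ let $W'$ be the submodule generated by $v_1+v_2$; by your big-cell argument $W'=\mathrm{span}\bigl(U_n^-\cdot(v_1+v_2)\bigr)$, so $W'_\lambda=\FF\,(v_1+v_2)$ is one-dimensional. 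The projection $W'\to W_1$ is nonzero (it sends $v_1+v_2$ to $v_1$), hence surjective since $W_1$ is irreducible. Its kernel is a submodule of $\{0\}\oplus W_2$, hence $0$ or $\{0\}\oplus W_2$; the latter would force $v_1,v_2\in W'$ separately, contradicting $\dim W'_\lambda=1$. Thus $W'\cong W_1$ and symmetrically $W'\cong W_2$.
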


\begin{example}
	\label{ex: highest weight}
	For the irreducible $\mathrm{GL}_n(F)$ representation 
	$$ W = (\largewedge^n F^n)^{\otimes m} \otimes \largewedge^k F^n $$
	the vector 
	$$w = (e_1 \wedge \dots \wedge e_n)^{\otimes m} \otimes (e_1 \wedge \dots \wedge e_k)$$
	is a highest weight vector. The highest weight is given by
	$$ (m+1)(\varepsilon_1 + \dots + \varepsilon_k) + m (\varepsilon_{k+1} + \dots + \varepsilon_n). $$
\end{example}

Suppose we choose another basis $b'_1,\ldots, b_n'$ of $V$ to define a torus $T'\subset \GL(V)$ and characters $\epsilon'_i\colon T'\to \GL_1(\FF)$. 
Let $P\in \GL(V)$  be defined by $Pb_i' = b_i$ for $i=1,\ldots, n$. 
If $w$ is a weight vector for $T^n$ of weight $\sum_{i=1}^n p_i\epsilon_i$, then $\rho(P^{-1}) w$ is a weight vector for $T'$ of weight $\sum_{i=1}^n p_i\epsilon_i'$. Thus the partition $(p_1,\ldots, p_n)$ corresponding to the highest weight of $W$ is independent of  the choice of basis of $V$. 
\subsection{Decomposition of tensor products}

If $\rho$ is an  irreducible rational representation of $\mathrm{GL}_n(F)$, its character is the regular function 
$ \chi_\rho\colon T_n \to \FF$, 
$$ \chi_\rho(x_1,\ldots, x_n)= \tr \rho\begin{pmatrix}
	x_1 & & \\
	& \ddots & \\
	& & x_n
\end{pmatrix}.$$
A crucial property of characters is that two representations $\rho$ and $\rho'$ are isomorphic if and only if $\chi_\rho= \chi_{\rho'}$. 

 A partition is a finite sequence $\lambda=(p_1,\ldots, p_n)$ of non-negative integers. The length of $\lambda$ is the largest index $i$ such that  $p_i\neq 0$.  The number $|\lambda|= p_1+\cdots + p_n$ is called the degree of $\lambda$.

\begin{theorem} Let $L_\lambda(n)$ denote the irreducible polynomial representation with highest weight $\lambda= \sum_{i=1}^n p_i \varepsilon_i$. The character of $L_\lambda(n)$ is the Schur polynomial $s_\lambda(x_1,\ldots, x_n)$ for the partition $\lambda=(p_1,\ldots, p_n)$. 
\end{theorem}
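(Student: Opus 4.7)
The strategy is to establish the Weyl character formula for $\GL_n(\FF)$, which identifies $\chi_{L_\lambda(n)}$ with the bialternant
$$ \frac{\det(x_j^{p_i+n-i})_{1\leq i,j\leq n}}{\det(x_j^{n-i})_{1\leq i,j\leq n}}, $$
and then observe that this is precisely the classical Jacobi definition of the Schur polynomial $s_\lambda$.  The denominator is the Vandermonde product $\prod_{i<j}(x_i-x_j)$.

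First I would note that $\chi_\rho$ is necessarily a symmetric polynomial in $x_1,\ldots,x_n$: a permutation matrix conjugates $\operatorname{diag}(x_1,\ldots,x_n)$ into $\operatorname{diag}(x_{\sigma(1)},\ldots,x_{\sigma(n)})$, and the trace is a class function.  Since the highest weight theorem identifies the irreducible polynomial representation $L_\lambda(n)$ uniquely by its highest weight, the task reduces to producing a representation with highest weight $\sum p_i \varepsilon_i$ whose character is known to equal $s_\lambda$.

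The most natural route in this algebraic setting is through Schur--Weyl duality.  Setting $m=|\lambda|$, one decomposes
$$V^{\otimes m} = \bigoplus_{\mu\vdash m} L_\mu(n)\otimes M^\mu$$
as a $(\GL_n(\FF)\times S_m)$-module, where $M^\mu$ is the Specht module.  Evaluating the character on the diagonal torus on both sides and invoking the Frobenius character formula on $S_m$ together with orthogonality of symmetric-group characters extracts $\chi_{L_\lambda(n)} = s_\lambda$.  To match the highest weight label, one verifies directly that the image under the Young symmetrizer of $e_1^{\otimes p_1}\otimes e_2^{\otimes p_2}\otimes\cdots$ in the Schur module $\mathbb{S}^\lambda V$ is a weight vector for $T^n$ of weight $\sum p_i\varepsilon_i$ and is annihilated by $U_n$, generalizing Example~\ref{ex: highest weight}.

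\textbf{Main obstacle.} The principal difficulty is the Weyl character formula itself, which requires substantial classical input: either Schur--Weyl duality together with the Frobenius character formula, or, equivalently, Weyl's integration formula and character orthogonality on the compact form $U(n)$.  In the context of this paper one would most likely just cite \cite{FultonHarris} or \cite{KraftProcesi}; once the bialternant formula is in hand, identification with $s_\lambda$ is immediate from the definition of the Schur polynomial.
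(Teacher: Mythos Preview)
The paper does not supply a proof of this theorem at all: it is stated without proof as standard background in Section~\ref{sec:repTheory}, whose references are \cite{FultonHarris} and \cite{KraftProcesi}. So there is nothing in the paper to compare your argument against.

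Your outline is a correct sketch of the classical derivation (Schur--Weyl duality plus the Frobenius formula, or equivalently the Weyl character formula in bialternant form), and you yourself anticipate in the final paragraph that the paper would simply cite a reference rather than reprove this. That is exactly what happens. If anything, your proposal goes well beyond what the paper does; for the purposes of this paper the result is treated as a black box and used only to justify the Littlewood--Richardson expansion~\eqref{eq:schurProduct} and Lemma~\ref{lemma:decompGL}.
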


The Schur polynomials of partitions of length $\leq n$  are a $\ZZ$ basis of the space of symmetric polynomials $\ZZ[x_1,\ldots, x_n]_{\mathrm{sym}}$. As a consequence, the product of two Schur polynomials for partitions $\lambda,\mu$ of length $\leq n$ is expressible as a unique linear combination of Schur polynomials,
\begin{equation}\label{eq:schurProduct}s_\lambda(x_1,\ldots, x_n)\cdot s_\mu(x_1,\ldots,x_n)= \sum_{\nu} N_{\nu}^{\lambda\mu} s_\nu(x_1,\ldots,x_n),\end{equation}
 where the sum is over all partitions of length $\leq n$. The non-negative integers $N_{\nu}^{\lambda\mu}$ are the Littlewood-Richardson coefficients. There is a purely combinatorial recipe for computing these numbers, see, e.g., \cite[p. 455]{FultonHarris}.

\begin{lemma}
	\label{lemma:decompGL}
	The highest weights appearing  in the isotypic  decomposition of $\Sym^pV \otimes (\Sym^q V)^*\otimes \det^q$ are of the form
$$ (p+q-i)\varepsilon_1 + q (\varepsilon_2 + \dots + \varepsilon_{n-1}) + i \varepsilon_n, \quad i=0,\dots , \min\{p,q\}.$$
	Moreover, each such highest weight appears with multiplicity one. 
\end{lemma}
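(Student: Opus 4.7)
The plan is to compute the character of $\Sym^p V \otimes (\Sym^q V)^* \otimes \det^q$ and decompose it via the Pieri rule.

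The first step is to identify $(\Sym^q V)^* \otimes \det^q$ as an irreducible polynomial representation with an explicit partition. Since $\Sym^q V \cong L_{(q,0,\ldots,0)}(n)$ and the dual of an irreducible $\GL_n$-representation with highest weight $(p_1,\ldots,p_n)$ has highest weight $(-p_n,\ldots,-p_1)$, the dual $(\Sym^q V)^*$ has highest weight $-q\varepsilon_n$. Tensoring with $\det^q$, whose highest weight is $q(\varepsilon_1 + \cdots + \varepsilon_n)$, shifts the weights additively, and we obtain the irreducible representation $L_\mu(n)$ with partition $\mu = (q,q,\ldots,q,0)$, i.e.\ $n-1$ entries equal to $q$ followed by a $0$.

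Next I will invoke the Pieri rule, which is the special case of \eqref{eq:schurProduct} when one factor is $s_{(p)}$:
$$s_{(p)} \cdot s_\mu = \sum_\lambda s_\lambda,$$
where $\lambda$ ranges over partitions with $|\lambda| = |\mu| + p$ such that $\lambda/\mu$ is a horizontal strip, equivalently $\lambda_i \geq \mu_i \geq \lambda_{i+1}$ for all $i$, and each $\lambda$ appears with multiplicity one. Since the character determines the isomorphism class of a rational representation, this directly yields the isotypic decomposition of $\Sym^p V \otimes L_\mu(n)$.

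For our specific $\mu = (q,q,\ldots,q,0)$ the interlacing conditions force $\lambda_j = q$ for $j = 2,\ldots,n-1$, leaving only the two degrees of freedom $\lambda_1 \geq q$ and $0 \leq \lambda_n \leq q$. Writing $i := \lambda_n$, the degree constraint $|\lambda| = (n-1)q + p$ forces $\lambda_1 = p + q - i$, and the condition $\lambda_1 \geq q$ is equivalent to $i \leq p$. Hence the admissible partitions are precisely $(p+q-i,\, q,\ldots,q,\, i)$ for $i = 0, 1, \ldots, \min\{p,q\}$, each with multiplicity one, and translating to highest weights yields the claimed formula. Once the identification of $(\Sym^q V)^* \otimes \det^q$ with $L_\mu(n)$ is in place, the remainder is a routine enumeration of horizontal strips; I do not expect any substantial obstacle.
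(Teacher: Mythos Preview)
Your proof is correct and follows essentially the same approach as the paper: identify the highest weight of $(\Sym^q V)^*\otimes\det^q$ as $\mu=q\sum_{i=1}^{n-1}\varepsilon_i$, then apply the Pieri formula to the product $s_{(p)}\cdot s_\mu$. Your explicit enumeration of the horizontal strips is more detailed than what the paper records, but the underlying argument is identical.
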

\begin{proof}
Recall that the character of the tensor product of two representations $W_1, W_2$ equals the product of their characters $\chi_{W_1}\cdot \chi_{W_2}$. Since $\Sym^pV$ has highest weight $\lambda= p \varepsilon_1$, $(\Sym^q V)^*$ has highest weight $-q\varepsilon_n$,  and 
$(\Sym^qV)^*\otimes \det^q$ has highest weight $\mu= q \sum_{i=1}^{n-1}\varepsilon_i$, the lemma follows from 
\eqref{eq:schurProduct} and Pieri formula, see \cite[Appendix A]{FultonHarris}.
\end{proof}

\subsection{Representations of $\mathrm{SL}(V)$}
\label{sec:SL}

Let $V,W$  be finite-dimensional vector spaces over $F$, a field of characteristic zero. 

\begin{definition}
A representation $\rho \colon \SL(V) \to \mathrm{GL}(W)$ is called  polynomial if the matrix coefficients $\rho_{ij}$ with respect to one---and hence every---basis of $W$ are  restrictions of polynomial functions on $\End(V)$ to $\SL(V)$.
\end{definition}

Taken together, the following two propositions give us a complete description of the polynomial representations of $\SL(V)$ in terms of representations of $\GL(V)$. 

\begin{proposition}[{\cite[Proposition 5.4.2]{KraftProcesi}}]\label{prop:polySL} Every polynomial representation $\rho$ of $\SL(V)$ is the restriction of a polynomial representation $\wt \rho$ of $\GL(V)$.  Moreover, $\rho$ is irreducible if and only if $\wt \rho$ is irreducible.
\end{proposition}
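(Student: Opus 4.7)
The strategy is to reduce to the irreducible case using Weyl's complete reducibility theorem, and then to build the extension using the highest-weight classification developed earlier in the section.

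First, Weyl's theorem---applicable because $\SL(V)$ is semisimple and $\operatorname{char}\FF = 0$---decomposes $W = W_1 \oplus \cdots \oplus W_r$ into $\SL(V)$-irreducible polynomial subrepresentations. Since a direct sum of polynomial representations is polynomial, it suffices to extend each summand individually, so I may assume from now on that $\rho\colon \SL(V) \to \GL(W)$ is itself irreducible.

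Second, recall that $\GL(V)$ is generated by $\SL(V)$ together with the central subgroup $Z = \FF^* \cdot \id_V$, and that $Z\cap \SL(V)= \mu_n \cdot \id_V$. Equivalently, on the character lattice of the diagonal torus, the $\SL(V)$-weights are $\mathcal X(T^n)/\ZZ(\varepsilon_1 + \cdots + \varepsilon_n)$, the class $\varepsilon_1+\cdots+\varepsilon_n$ corresponding to the determinant character. The irreducible $\SL(V)$-representation $\rho$ has an $\SL(V)$-dominant highest weight, which lifts to some $\GL(V)$-dominant weight $(p_1,\dots,p_n)$. Adding a sufficiently large multiple of $(1,\dots,1)$ (equivalently, tensoring with a power of $\det$), we may arrange $p_n\geq 0$, so that the lifted weight is the highest weight of an irreducible \emph{polynomial} $\GL(V)$-representation $\wt W$ by Theorem~\ref{thm:hwt}. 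The restriction $\wt W|_{\SL(V)}$ is then an irreducible polynomial $\SL(V)$-representation with the same $\SL(V)$-highest weight as $\rho$, so by the classification the two are isomorphic, producing the desired extension $\wt\rho$.

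Third, for the irreducibility equivalence: if $\rho$ is irreducible, then any $\wt\rho$-invariant subspace of $W$ is a fortiori $\rho$-invariant and so equals $0$ or $W$. Conversely, if $\wt\rho$ is irreducible, Schur's lemma applied to the central subgroup $Z$ forces $\wt\rho|_Z$ to act by a scalar character; hence every $\rho$-invariant subspace is automatically $Z$-invariant and therefore $\GL(V) = Z\cdot \SL(V)$-invariant, giving $0$ or $W$.

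\textbf{Main obstacle.} The technical heart of the argument is the $\SL(V)$-analog of the highest-weight classification underlying Step 2: one must verify that the lifted $\GL(V)$-representation really restricts to the given $\rho$ rather than to some twist of it, and that the choice of lift of the $\SL(V)$-highest weight can be made compatibly with polynomiality. Over a non-algebraically closed $\FF$ one must also be careful to distinguish absolutely irreducible from merely irreducible $\SL(V)$-representations, but this is settled by passing to the algebraic closure and then descending using that polynomial matrix coefficients are defined over $\FF$.
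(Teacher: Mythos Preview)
The paper does not supply its own proof of this proposition; it is quoted verbatim from Kraft--Procesi with a citation and no argument. So there is no ``paper's proof'' to compare your attempt against.

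That said, your argument has two genuine gaps.

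\textbf{Circularity in Step 2.} You invoke the highest-weight classification for irreducible $\SL(V)$-representations: that an irreducible polynomial $\SL(V)$-module has a well-defined $\SL(V)$-highest weight, and that this weight determines the module up to isomorphism. But in the paper's logical order---and in Kraft--Procesi's development---this classification is \emph{deduced from} the present proposition together with the subsequent statement about determinant twists. You flag this yourself under ``Main obstacle'' but do not resolve it; as written, Step~2 assumes what the proposition is there to provide. A non-circular route would work directly with coordinate rings: lift the polynomial matrix coefficients from $\FF[\SL(V)] = \FF[\End(V)]/(\det-1)$ to $\FF[\End(V)]$, homogenize by a suitable power of $\det$, and verify the multiplicativity on $\GL(V)$ by a density argument---no $\SL(V)$ highest-weight theory needed.

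\textbf{Error in Step 3.} The assertion $\GL(V) = Z\cdot \SL(V)$ is false over a general field $\FF$ of characteristic zero: it requires every element of $\FF^\times$ to have an $n$th root (take $\FF=\QQ$, $n=2$). What is true is that $Z\cdot \SL(V)$ is Zariski dense in $\GL(V)$, and that for a polynomial representation the stabilizer of any subspace is Zariski closed; together these give the conclusion. This is an easy repair, but the line as written is wrong over the fields the proposition is stated for.
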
 

Let $\rho, \rho'$ be irreducible rational representations of $\GL(V)$ and suppose that $\rho|_{\SL(V)} = \rho'|_{\SL(V)}$. Then $\rho = \det^{ r} \otimes \rho'$ for some $r\in \ZZ$,  see \cite[Exercise 5.3]{KraftProcesi}. This fact translates into highest weights as follows. 

\begin{proposition}
Let $\rho, \rho'$ be irreducible rational representations of $\GL(V)$ with highest weights $\lambda=\sum_{i=1}^n p_i \varepsilon_i$ and $\lambda'=\sum_{i=1}^n p_i' \varepsilon_i$.  Suppose that $\rho|_{\SL(V)} = \rho'|_{\SL(V)}$. Then $p_i-p_n= p'_i - p_n'$ for $i=1,\ldots, n-1$. 
\end{proposition}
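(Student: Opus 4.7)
The plan is to derive this directly from the fact cited immediately before the proposition: if $\rho$ and $\rho'$ are irreducible rational representations of $\GL(V)$ whose restrictions to $\SL(V)$ agree, then $\rho \simeq \det^{r}\otimes \rho'$ for some $r\in\ZZ$. Once this is in hand, the proposition reduces to computing how tensoring with $\det^r$ shifts the highest weight.

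First I would record that the determinant representation $\det\colon \GL(V)\to \GL_1(\FF)$ has highest weight $\varepsilon_1+\cdots+\varepsilon_n$, since on the torus $T^n$ we have $\det(\operatorname{diag}(t_1,\ldots,t_n)) = t_1\cdots t_n$. Consequently $\det^{r}$ has highest weight $r(\varepsilon_1+\cdots+\varepsilon_n)$. Since the character of a tensor product is the product of the characters, the highest weight of $\det^r\otimes \rho'$ is $\lambda' + r(\varepsilon_1+\cdots+\varepsilon_n) = \sum_{i=1}^n (p_i'+r)\varepsilon_i$.

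Next, by the uniqueness part of the theorem of the highest weight (Theorem~\ref{thm:hwt}(c)), an irreducible rational representation of $\GL(V)$ is determined up to isomorphism by its highest weight. Since $\rho \simeq \det^r\otimes \rho'$ and both have highest weights $\sum p_i\varepsilon_i$ and $\sum(p_i'+r)\varepsilon_i$ respectively, we obtain $p_i = p_i'+r$ for every $i=1,\ldots,n$. Subtracting the identity for $i=n$ from the identity for any $i<n$ cancels the constant $r$ and yields $p_i-p_n = p_i'-p_n'$ for $i=1,\ldots,n-1$, as required.

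There is no serious obstacle: the whole argument is just an unpacking of the cited classification of irreducible rational $\GL(V)$-representations whose restrictions to $\SL(V)$ coincide, combined with the elementary observation that tensoring with a one-dimensional representation shifts all weights uniformly. The only point deserving a remark is why the fact from \cite[Exercise 5.3]{KraftProcesi} applies here, but this is covered verbatim in the statement preceding the proposition.
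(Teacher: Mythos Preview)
Your proof is correct and is exactly the argument the paper intends: the proposition is stated without proof, immediately after the sentence ``Then $\rho = \det^{r}\otimes \rho'$ for some $r\in\ZZ$ \ldots\ This fact translates into highest weights as follows,'' so you have simply spelled out the straightforward translation the authors leave to the reader.
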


The proposition implies that there is a bijection between (the isomorphism classes of) irreducible representations of $\SL(V)$ and partitions $(p_1,\ldots, p_{n-1})$ of length at most $n-1$. We refer to the latter also as highest weights. 

Let us finally restrict the field to the case relevant to this paper, $\FF= \RR$. 
In this case, the group $\SL(V)$ is also a Lie group. Therefore it makes sense to call a representation $\rho\colon \SL(V)\to \GL(W)$ a Lie group representation if the matrix coefficients $\rho_{ij}$ are smooth functions on $\SL(V)$. Every polynomial representation is clearly smooth. The converse is also true. 

\begin{proposition}
Every finite-dimensional Lie group representation of $\SL(V)$ is polynomial.
\end{proposition}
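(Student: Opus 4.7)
The plan is to reduce the statement to the classification of finite-dimensional representations of the complex semisimple Lie algebra $\mathfrak{sl}(V_{\mathbb{C}})$ and invoke the theorem of the highest weight (Theorem \ref{thm:hwt}) combined with Proposition \ref{prop:polySL}.

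First, I would differentiate $\rho$ at the identity and $\mathbb{C}$-linearly extend on both sides to obtain a complex Lie algebra representation $d\rho_{\mathbb{C}}\colon \mathfrak{sl}(V_{\mathbb{C}})\to\mathfrak{gl}(W_{\mathbb{C}})$. Because $\mathfrak{sl}(V_{\mathbb{C}})$ is semisimple, Weyl's complete reducibility theorem yields a decomposition $W_{\mathbb{C}}=\bigoplus_j U_j$ into irreducible $\mathfrak{sl}(V_{\mathbb{C}})$-submodules. Since $\SL(V_{\mathbb{C}})$ is simply connected, each $U_j$ integrates to an irreducible holomorphic $\SL(V_{\mathbb{C}})$-representation; by Proposition \ref{prop:polySL} together with Theorem \ref{thm:hwt}, this representation agrees with the restriction of an irreducible polynomial $\GL(V_{\mathbb{C}})$-representation $L_{\lambda_j}$ with dominant highest weight $\lambda_j$. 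Assembling the summands produces a polynomial representation $\tilde\rho_{\mathbb{C}}\colon \SL(V_{\mathbb{C}})\to\GL(W_{\mathbb{C}})$ whose differential at the identity equals $d\rho_{\mathbb{C}}$.

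Finally, restricting to the real form $\SL(V)\subset\SL(V_{\mathbb{C}})$, both $\tilde\rho_{\mathbb{C}}|_{\SL(V)}$ and the $\mathbb{C}$-linear extension of $\rho$ to $W_{\mathbb{C}}$ are smooth homomorphisms with identical differentials at the identity, so by connectedness of $\SL(V)$ they coincide. In a basis of $W_{\mathbb{C}}$ extending a real basis of $W$, the matrix coefficients of $\tilde\rho_{\mathbb{C}}$ are complex polynomials in the entries of $g\in\End_{\mathbb{C}}(V_{\mathbb{C}})$; their restriction to $\End(V)$ yields real polynomials that coincide with the matrix coefficients of $\rho$, establishing polynomiality. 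The main obstacle is the Lie-algebra-to-Lie-group integration step in the second paragraph: it is essential that $\SL(V_{\mathbb{C}})$ is simply connected, so that every irreducible Lie-algebra summand genuinely arises from a group representation and can be identified via Theorem \ref{thm:hwt} and Proposition \ref{prop:polySL}; once this is in place the remaining comparison with $\rho$ is routine Lie-theoretic bookkeeping.
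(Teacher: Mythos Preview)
Your argument is correct and shares the paper's overall architecture: differentiate, complexify to a representation of $\mathfrak{sl}_n(\mathbb{C})$, invoke its structure theory, and restrict back to $\SL(V)$. The paper, however, takes a shorter path at the central step. Rather than decomposing $W_{\mathbb{C}}$ into irreducibles via Weyl's theorem, integrating each summand through the simple connectedness of $\SL(V_{\mathbb{C}})$, and identifying each with a polynomial representation via highest weight theory, the paper simply cites the fact (Fulton--Harris) that every finite-dimensional $\mathfrak{sl}_n(\mathbb{C})$-module embeds into a direct sum of tensor powers $(V_{\mathbb{C}})^{\otimes m}$; restricting to $\mathfrak{sl}_n(\mathbb{R})$ and to real scalars exhibits $\rho$ as a subrepresentation of a sum of copies of $V^{\otimes m}$, which is manifestly polynomial. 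Your route is more explicit but leans on slightly more machinery, and your invocation of Theorem~\ref{thm:hwt} and Proposition~\ref{prop:polySL} is a little loose: those results are stated for rational/polynomial representations of $\GL_n(F)$, not for holomorphic or Lie-algebra representations, so what you actually need at that point is the Lie-algebra version of the highest weight theorem. The paper's tensor-power embedding sidesteps this bookkeeping entirely.
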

\begin{proof}
This  follows from the representation theory of the complex Lie algebra $\sl_n(\CC)$.  Indeed, let $\rho\colon \SL_n(\RR)\to \GL(W)$ be a Lie group representation and let $d\rho\colon \sl_n(\RR)\to \End(W)$ denote its Lie algebra representation. The complexification of the latter is a Lie algebra representation $d\rho_\CC \colon \sl_n(\CC)\to \End(W_\CC)$, where $W_\CC= W\otimes_\RR \CC$. It is well known that every finite-dimensional representation of $\sl_n(\CC)$ is contained in a sum of  tensor powers $(V_\CC)^{\otimes m}\simeq (V^{\otimes m})_\CC$ of the complexification of the standard representation $V= \RR^n $, see \cite[p. 221]{FultonHarris}.   Restricting back to $\sl_n(\RR)$ and the scalar field $\RR$, it follows that $\rho$ is isomorphic to a subrepresentation of a sum of   $V^{\otimes m}\oplus V^{\otimes m}$ and hence polynomial.
\end{proof}

The following  fact will also be relevant for us. 

\begin{lemma}\label{lemma:imageSL}
	Let $\rho\colon \SL(V)\to \GL(W)$ be a polynomial representation. Then the image of $\rho$ is contained in $\SL(W)$. 
\end{lemma}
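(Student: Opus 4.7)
The plan is to prove that the composition $\chi = \det\circ\rho \colon \SL(V) \to F^{\times}$ is identically $1$, which is equivalent to the claim $\rho(\SL(V)) \subset \SL(W)$. The key observation is that $\chi$ is itself a one-dimensional polynomial representation of $\SL(V)$: this follows immediately from the fact that $\rho$ has polynomial matrix entries and $\det$ is a polynomial function on $\End(W)$, so their composition is a polynomial function on $\SL(V)$ taking values in $F^{\times}$.

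The main step is then to show that any one-dimensional polynomial character of $\SL(V)$ is trivial. Applying Proposition~\ref{prop:polySL}, I would extend $\chi$ to a one-dimensional polynomial representation $\widetilde{\chi} \colon \GL(V) \to \GL_1(F)$ of the full general linear group. By the theorem of the highest weight, $\widetilde{\chi}$ is classified by a dominant weight $\sum_{i=1}^n p_i \varepsilon_i$ with $p_1 \geq \cdots \geq p_n \geq 0$, and its restriction to the diagonal torus is the monomial $t_1^{p_1}\cdots t_n^{p_n}$. Because a one-dimensional representation equals its own character, it is invariant under conjugation; and since permutation matrices act on the diagonal torus by permuting the entries, this monomial must be a symmetric function of $t_1,\ldots,t_n$. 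This forces $p_1 = \cdots = p_n =: k$, and Example~\ref{ex: highest weight} identifies the corresponding irreducible polynomial representation as $\det^k$. Hence $\widetilde{\chi} = \det^k$ for some $k \in \NN$.

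Restricting back to $\SL(V)$, where $\det \equiv 1$, then gives $\chi \equiv 1$, i.e., $\det\rho(g) = 1$ for every $g \in \SL(V)$, which is precisely the claim. The only step that requires an argument beyond the results collected in Section~\ref{sec:repTheory} is the classification of one-dimensional polynomial characters of $\GL(V)$ as non-negative powers of the determinant, which is what the weight-symmetry argument above supplies; the remainder is a direct chain of formal reductions.
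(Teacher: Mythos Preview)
Your argument is correct. You reduce to showing that the one-dimensional polynomial character $\chi=\det\circ\rho$ of $\SL(V)$ is trivial, extend it via Proposition~\ref{prop:polySL} to a one-dimensional polynomial character $\widetilde\chi$ of $\GL(V)$, and then classify such characters as non-negative powers of $\det$ by the symmetry argument with permutation matrices. Each step is sound; in particular, for a one-dimensional representation the weight decomposition has a single term, so the restriction to the torus really is the monomial $t_1^{p_1}\cdots t_n^{p_n}$, and conjugation by permutation matrices forces the exponents to agree.

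The paper takes a different and much shorter route: it passes to the Lie algebra and uses that $\mathfrak{sl}(V)$ is perfect, i.e.\ $\mathfrak{sl}(V)=[\mathfrak{sl}(V),\mathfrak{sl}(V)]$. Since $d\rho$ preserves brackets and the trace of any commutator vanishes, $d\rho(\mathfrak{sl}(V))\subset\mathfrak{sl}(W)$, and then $\rho(e^X)=e^{d\rho(X)}$ finishes the argument. This is a two-line Lie-theoretic proof tailored to the case $F=\RR$ (which is all the paper needs), whereas your approach is purely algebraic, relies on the highest-weight machinery of Section~\ref{sec:repTheory}, and works uniformly over any field of characteristic zero. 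So you trade brevity for generality; either proof is perfectly acceptable here.
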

\begin{proof}
	Let $[X,Y]=XY-YX$ denote the Lie bracket in $\End(V)$ and its Lie subalgebras. Since $\sl(V)=[\sl(V),\sl(V)]$, $d\rho$ respects the Lie bracket,  and 
	$\tr[X,Y]=\tr(XY-YX)=0$, it follows that that the image of $d\rho$ is contained in $\sl(W)$. Using $\rho(e^X)= e^{d\rho(X)}$, the claim follows.  
\end{proof}

\section{Reduction to homogeneous  even  valuations}

For the rest of the paper let $V$ be a finite-dimensional real vector space of dimension at least $2$. Let $\rho\colon \SL(V)\to \GL(W)$ be a polynomial representation of $\SL(V)$. To distinguish below more precisely between different types of invariance, a  map $\calK(V)\to \calK(W)$  is called $\SL(V)$-invariant if $\rho(T)\Phi(T^{-1} K)=  \Phi K$ holds for all $K$ and $T\in \SL(V)$.  

Recall that  a Minkowski valuation $\Phi\colon \calK(V)\to \calK(W)$ is said to be non-trivial, if there exists  $K$  such that $\Phi(K)\neq \{0\}$. Otherwise we call $\Phi$ trivial.  
The goal of this section is to show that if there exists at all a non-trivial $\SL(V)$-invariant Minkowski valuation $\Phi\in \MVal(V,W)$, then there exists also a non-trivial Minkowski valuation with particularly desirable properties.

\begin{proposition}\label{prop:nicer}
		Let $W$ be a non-trivial irreducible representation of $\SL(V)$. If there exists at least one  non-trivial  $\SL(V)$-invariant Minkowski valuation in $\MVal(V,W)$, then there exists also such a valuation that is in addition homogeneous, even and takes values in the  subset of origin-symmetric convex bodies. 
	
\end{proposition}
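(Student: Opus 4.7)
The plan is to produce the required valuation in two steps: first symmetrize $\Phi$ at the level of $\calK(W)$, then extract a homogeneous component by the Minkowski-valuation analog of McMullen's decomposition. The principal obstacle is the degenerate case in which the symmetrization collapses to a single point for every convex body, which I handle using Alesker's irreducibility theorem combined with the irreducibility hypothesis on $W$.

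I would start by defining
\[
\Psi(K) := \Phi(K) + \Phi(-K) + (-\Phi(K)) + (-\Phi(-K)),
\]
where $-L$ denotes the reflection of $L \in \calK(W)$ in the origin. Each of the four summands is readily verified to be a continuous, translation-invariant Minkowski valuation, hence so is $\Psi$. Since $\rho(T)$ is linear it commutes with both Minkowski addition and with $-\id_W$, and therefore the $\SL(V)$-invariance of $\Phi$ transfers to $\Psi$; by construction $\Psi$ is even and origin-symmetric-valued. Applying the standard extension of McMullen's decomposition to Minkowski valuations then yields $\Psi = \Psi_0 + \cdots + \Psi_n$ with $\Psi_i \in \MVal_i(V,W)$, and by comparing polynomial coefficients in $r$ in the identity $h_{\Psi(rK)}(\eta) = \sum_i r^i h_{\Psi_i K}(\eta)$ one checks that each $\Psi_i$ also inherits $\SL(V)$-invariance, evenness, and origin-symmetry of values. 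If some $\Psi_i$ is non-trivial, we are done.

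The remaining case is $\Psi \equiv \{0\}$. Since the Minkowski sum of two convex bodies is a single point if and only if both are, this forces $\Phi(K) = \{p(K)\}$ for every $K$, where $p \colon \calK(V) \to W$ is a non-trivial, $\SL(V)$-equivariant, continuous, translation-invariant $W$-valued valuation. I would decompose $p = p_0 + \cdots + p_n$ by McMullen's theorem and, for each $1 \leq i \leq n-1$, consider the continuous $\SL(V)$-equivariant linear map $W^* \to \Val_i(V)$ given by $\eta \mapsto \langle p_i(\cdot), \eta\rangle$. Projecting onto $\Val_i^\pm(V)$, the images are finite-dimensional (hence closed) $\SL(V)$-invariant subspaces. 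By Alesker's theorem $\Val_i^\pm(V)$ is topologically $\GL(V)$-irreducible; since scalar dilations act by scalars on $\Val_i^\pm(V)$, every closed $\SL(V)$-invariant subspace is also $\GL(V)$-invariant, so $\Val_i^\pm(V)$ is $\SL(V)$-irreducible as well. Being infinite-dimensional, it admits no non-zero finite-dimensional closed invariant subspaces, so $p_i = 0$ for each $1 \leq i \leq n-1$. Therefore $p = p_0 + p_n$ with $p_0 \in W$ constant and $p_n(K) = \vol(K) w$ for some $w \in W$, and the $\SL(V)$-invariance of $p$ places both $p_0$ and $w$ in $W^{\SL(V)}$. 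Non-triviality of $p$ then gives $W^{\SL(V)} \neq 0$, and irreducibility of $W$ forces $W^{\SL(V)} = W$, so $W \cong \RR$ is the trivial representation. In that case $K \mapsto \vol(K) \cdot [-1, 1]$ is itself a non-trivial, $n$-homogeneous, even, origin-symmetric-valued, $\SL(V)$-invariant, continuous, translation-invariant Minkowski valuation, finishing the proof.
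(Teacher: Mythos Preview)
Your overall strategy---symmetrize, then pass to a homogeneous component, and handle the degenerate point-valued case via Alesker irreducibility---is essentially the paper's argument, but you carry out the first two steps in the opposite order, and this creates a genuine gap.

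The assertion ``applying the standard extension of McMullen's decomposition to Minkowski valuations then yields $\Psi = \Psi_0 + \cdots + \Psi_n$ with $\Psi_i \in \MVal_i(V,W)$'' is not justified: there is no such standard extension. For a Minkowski valuation $\Psi$, the homogeneous components $\psi_{\xi,i}$ of the scalar valuations $h_{\Psi K}(\xi)$ need not be support functions in $\xi$ for intermediate degrees~$i$ (the paper itself is careful about this; see the proof of Proposition~\ref{prop:split}, where the intermediate components are treated only as real-valued valuations). What \emph{is} true is that the extremal non-vanishing component is a support function, because
\[
\psi_{\xi,i_0}(K)=\lim_{t\to 0^+} t^{-i_0} h_{\Psi(tK)}(\xi)
\]
is a pointwise limit of support functions. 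This is precisely the device the paper uses: it first extracts one homogeneous Minkowski valuation $\Phi$ from the original valuation via this limit, and \emph{then} symmetrizes to $\wt\Phi(K)=\Phi(K)+\Phi(-K)+(-(\Phi(K)+\Phi(-K)))$. Your argument is easily repaired the same way: after forming your $\Psi$, take only its lowest non-vanishing homogeneous component rather than asserting a full Minkowski-valued McMullen decomposition.

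Two smaller points. First, your passage from $\SL(V)$-invariant subspaces of $\Val_i^\pm(V)$ to $\GL(V)$-invariant ones via ``scalar dilations act by scalars'' gives only $\GL^+(V)$-invariance when $n$ is even; the paper closes this by enlarging the finite-dimensional span to include the valuations $K\mapsto\langle \phi(TK),\xi\rangle$ for one fixed $T$ with $\det T=-1$. Second, your final sentence constructing a valuation for $W\cong\RR$ is unnecessary: the hypothesis is that $W$ is a \emph{non-trivial} irreducible representation, so reaching $W\cong\RR$ is already the desired contradiction, and the point-valued case cannot occur.
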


We will need the following lemma.

\begin{lemma}\label{lemma:vecValued}
	Let $W$ be a non-trivial irreducible representation of $\SL(V)$. If  $\phi\colon \calK(V)\to W$ is a translation-invariant, continuous and $\SL(V)$-equivariant valuation, then $\phi=0$. 
\end{lemma}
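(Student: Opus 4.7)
The plan is to split $\phi$ via McMullen's decomposition and to handle the extremal and intermediate degrees of homogeneity by separate arguments.

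For each $\eta\in W^*$ the composition $\phi_\eta:=\langle\eta,\phi(\cdot)\rangle$ lies in $\Val(V)$, so McMullen's decomposition yields a canonical $W$-valued splitting $\phi=\phi_0+\phi_1+\cdots+\phi_n$ with $\phi_i$ of degree $i$. Since the $\SL(V)$-action on $\calK(V)$ commutes with the dilations $K\mapsto tK$, each $\phi_i$ inherits the $\SL(V)$-equivariance, and it suffices to prove $\phi_i=0$ for each $i$.

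For $i=0$ and $i=n$ I would reduce to the fact that $W$ admits no non-zero $\SL(V)$-invariant vector, which follows from the assumption that $W$ is irreducible and non-trivial. If $i=0$, then $\phi_0$ is constant with value $w\in W$ and equivariance gives $\rho(T)w=w$ for all $T\in\SL(V)$, whence $w=0$. If $i=n$, the vector-valued form of Hadwiger's characterization $\Val_n(V)=D(V)$, obtained by composing with functionals on $W$, yields $\phi_n(K)=\vol(K)\,w$ for some $w\in W$; the $\SL(V)$-invariance of $\vol$ together with the equivariance of $\phi_n$ then forces $w=0$.

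For $1\le i\le n-1$ I plan to use Alesker's irreducibility theorem. Consider the finite-rank linear map
\[ \Psi \colon W^*\to\Val_i(V), \qquad \eta\mapsto \phi_{i,\eta}, \]
which is $\SL(V)$-equivariant with respect to the natural actions on source and target. Splitting into even and odd parts gives $\SL(V)$-equivariant maps $\Psi^\pm\colon W^*\to \Val_i^\pm(V)$, whose images are finite-dimensional and therefore closed. Because positive scalar matrices $t\cdot\id$ act on $\Val_i^\pm(V)$ by the scalar $t^{-i}$ and $-\id$ acts by $\pm 1$, every closed $\SL(V)$-invariant subspace of $\Val_i^\pm(V)$ is automatically $\GL(V)$-invariant. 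Alesker's theorem then forces the image of each $\Psi^\pm$ to be either $\{0\}$ or all of $\Val_i^\pm(V)$; since the latter is infinite-dimensional for $1\le i\le n-1$, the image must be $\{0\}$, and hence $\phi_i=0$.

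The main obstacle I anticipate lies in the intermediate-degree step: upgrading $\SL(V)$-invariance of the finite-dimensional image of $\Psi^\pm$ to $\GL(V)$-invariance---which reduces to the observation that the center of $\GL(V)$ acts by scalars on $\Val_i^\pm(V)$---so that Alesker's irreducibility theorem can legitimately be applied to rule out a non-zero image.
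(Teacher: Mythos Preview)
Your overall strategy matches the paper's: decompose into homogeneous pieces, dispose of degrees $0$ and $n$ by the absence of non-zero fixed vectors in a non-trivial irreducible representation, and handle $1\le i\le n-1$ via Alesker's irreducibility theorem applied to a finite-dimensional invariant subspace. The first two steps are fine.

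The gap is exactly in the step you flag as the ``main obstacle'', and your proposed resolution does not work when $n$ is even. You argue that positive scalars and $-\id$ act by scalars on $\Val_i^\pm(V)$, and implicitly that $\SL(V)$ together with the center of $\GL(V)$ generates $\GL(V)$. But the center of $\GL(V)$ consists of the scalar matrices $\lambda\cdot\id$, and $\det(\lambda\cdot\id)=\lambda^n>0$ for all $\lambda\neq 0$ when $n$ is even. Hence for even $n$ the group generated by $\SL(V)$ and the center is only $\GL^+(V)$, and your claim that every closed $\SL(V)$-invariant subspace of $\Val_i^\pm(V)$ is automatically $\GL(V)$-invariant is not justified by this argument.

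The paper handles this by explicitly enlarging the subspace: fix any $T\in\GL(V)$ with $\det T=-1$ and take the span of the valuations $K\mapsto\langle\phi_i(K),\xi\rangle$ together with $K\mapsto\langle\phi_i(TK),\xi\rangle$ for $\xi\in W^*$. This space is still finite-dimensional, is visibly $\GL(V)$-invariant (since $T$ normalizes $\GL^+(V)$ and $T^2\in\GL^+(V)$), and Alesker's theorem then forces it to be zero. Equivalently, in your notation, you could replace the image $U$ of $\Psi^\pm$ by $U+T\cdot U$, which is $\GL(V)$-invariant and remains finite-dimensional; the argument then goes through unchanged.
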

\begin{proof}
Consider the decomposition $\phi=\phi_0+ \cdots + \phi_n$ into homogeneous components. Since $\phi$ is $\SL(V)$-equivariant, every homogeneous component has this property as well. Without loss of generality we may therefore  assume that $\phi$ is homogeneous. Let us fix a density $\vol$ on $V$. 

If $\phi$ is homogenous of degree $n$, then there exists by Hadwiger's theorem a vector $w\in W$ such that $$\phi(K) =\vol(K) w, \quad K\in \calK(V).$$ 
If $\phi \neq 0$, then the $\SL(V)$-equivariance of $\phi$ and the irreducibility of $W$ imply that $W$ is spanned by $w$. This contradicts the assumption that $W$ is non-trivial. 

A similar, but even simpler argument shows that  $\phi=0$ in the case that $\phi$ is homogeneous of degree $0$. 

We  assume from now on that $\phi$ is homogeneous of degree $0<k<n$. Splitting $\phi= \phi^+ + \phi^-$ into an even and an odd part we may further assume that $\phi$ is either even or odd.  Fix some $T\in \GL(V)$ with $\det T= -1$ and consider the finite-dimensional subspace $U$ of $\Val_k(V)$ spanned by the valuations $ \langle \phi(K),\xi\rangle$ and $ \langle \phi(T(K)),\xi\rangle$  for $\xi\in W^*$.  Observe that $U$ is a $\GL(V)$-invariant subspace.
Since $\Val_k(V)$ is infinite-dimensional for $0<k<n$, Alesker's irreducibility theorem implies $U=\{0\}$. 
\end{proof}

\begin{proof}[Proof of Proposition~\ref{prop:nicer}] Let $\Psi\in \MVal(V,W)$. For each $\xi\in W^*$ consider the valuation $\psi_\xi\colon \calK(V)\to \RR$ defined by 
	$ \psi_\xi(K) = h_{\Psi K} (\xi)$. Let 
	$$ \psi_\xi = \psi_{\xi,0} + \cdots + \psi_{\xi,n}$$
	be the decomposition into homogeneous components. Let $i_0$ be the smallest integer $i$  with the property that $\psi_{\xi,i}$ is for some $\xi$ not identically zero. 
	Obviously,
	$$ \psi_{\xi,i_0}(K)  =  \lim_{t\to\infty}  t^{-i_0}h_{\Psi(tK)}(\xi)$$
	holds for every convex body $K$ and all $\xi\in W^*$. As a pointwise limit of support functions, the function $\xi \mapsto \psi_{\xi,i_0}(K)$ is the support function of a unique convex body in $W$ that we denote by $\Phi(K)$. It is clear from this definition that  $\Phi\colon \calK(V)\to \calK(W)$ is non-trivial,  translation-invariant, continuous, $\SL(V)$-invariant, and homogeneous of degree $i_0$. 
	The Minkowski valuation $ \wt \Phi \colon \calK(V)\to \calK(W)$
	$$ \wt \Phi(K)=  \Phi(K)+  \Phi(-K) + (- ( \Phi(K)+ \Phi(-K)))$$
	is not only translation-invariant, continuous, $\SL(V)$-invariant, and homogeneous, but also even and satisfies $-\wt \Phi(K)= \wt \Phi(K)$. 
	If $ \wt \Phi$ was  trivial, then $ \Phi $ would be vector-valued, i.e.\ $\Phi K = \{ \phi (K)\}$ for some valuation $\phi\colon \calK(V)\to W$. Since $\phi$ satisfies the assumption of Lemma~\ref{lemma:vecValued}, this  implies that already $\Phi$ was trivial, a contradiction.

\end{proof}

\section{Proof of Theorem~\ref{thm:mainA}}

As before let $V$ denote a real vector space of dimension $n\geq 2$ and let $\rho\colon \SL(V)\to \GL(W)$ be a  non-trivial irreducible polynomial $\mathrm{SL}(V)$-representation. As we saw in Proposition~\ref{prop:polySL}, the representation $\rho$ is the restriction of an irreducible polynomial $\mathrm{GL}(V)$-representation. We have also seen  (Proposition~\ref{prop:TensorPow}) that $W$ is a subrepresentation of some tensor power of $V,$ say $W \subset V^{\otimes d}$.

 In this section, we assume $\Phi  \in \MVal_k^+(V,W)$ to be non-trivial, $\mathrm{SL}(V)$ equivariant and to  satisfy $\Phi = -\Phi$. We have already seen in Proposition~\ref{prop:nicer} that if there exists a non-trivial and $\SL(V)$-invariant $\Phi\in\MVal(V,W)$, then there exists also a Minkowski  valuation with these additional properties.   
  The goal of this section is to prove that these assumptions already imply  $W \simeq \largewedge^k V.$ We achieve this by determining the Klain section of $\Phi$ and we conclude using Theorem~\ref{thm:imageKlain}. 
 
 Let $\GL^+(V)\subset \GL(V)$ denote the subgroup of automorphisms  with positive determinant.

\begin{lemma}
	\label{lemma: weak GL+ equivariant}
	For all $T \in \mathrm{GL}^+(V)$ we have $$\Phi(T K) =\det(T)^{-m} \rho(T) \Phi(K),$$ 
	where
	$$m:= \frac{d-k}{n}.$$ 
\end{lemma}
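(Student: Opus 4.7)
The plan is to upgrade the $\SL(V)$-equivariance of $\Phi$ to $\GL^+(V)$-equivariance by using $k$-homogeneity to absorb the scalar factor in the decomposition $T = \lambda\, S$ with $\lambda>0$ and $S\in\SL(V)$. For this I first need to extend $\rho$ to $\GL(V)$ in a way compatible with the fixed embedding $W\subset V^{\otimes d}$.

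For the extension, I will observe that $W$ is automatically $\GL(V)$-invariant inside $V^{\otimes d}$. Indeed, $V^{\otimes d}$ is a polynomial $\GL(V)$-representation, and by Proposition~\ref{prop:polySL} each of its $\GL(V)$-irreducible summands remains irreducible as an $\SL(V)$-representation, so the $\GL(V)$- and $\SL(V)$-isotypic decompositions of $V^{\otimes d}$ coincide. In particular any $\SL(V)$-subrepresentation is $\GL(V)$-invariant. Denoting the resulting extension again by $\rho\colon \GL(V)\to\GL(W)$, the scalar matrix $\mu\cdot I_V$ acts on $V^{\otimes d}$, and hence on $W$, by $\mu^d$; i.e.\ $\rho(\mu\cdot I_V) = \mu^d\cdot I_W$ for all $\mu\in\RR\setminus\{0\}$.

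For the main computation, given $T\in\GL^+(V)$, set $\lambda = \det(T)^{1/n}>0$ and $S = \lambda^{-1} T \in\SL(V)$, so that $T = \lambda\cdot I_V\cdot S$. Then $k$-homogeneity of $\Phi$ gives $\Phi(T K) = \lambda^k\Phi(SK)$, while $\SL(V)$-equivariance (applied in the form $\Phi(SK) = \rho(S)\Phi(K)$) combined with the scaling identity $\rho(S) = \rho(\lambda^{-1}I_V)\rho(T) = \lambda^{-d}\rho(T)$ yields
$$\Phi(TK) \;=\; \lambda^{k-d}\,\rho(T)\Phi(K) \;=\; \det(T)^{-(d-k)/n}\,\rho(T)\Phi(K),$$
which is the claimed identity with $m=(d-k)/n$.

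There is no genuine obstacle here; the argument is bookkeeping of scalars, and neither the Minkowski valuation structure nor continuity of $\Phi$ is used. The one point that deserves care is the exponent with which scalars act on $W$, since this is precisely what determines $m$: that this exponent equals $d$, and not $d+nr$ for some integer twist, is forced by the fixed embedding $W\subset V^{\otimes d}$ carried over from Proposition~\ref{prop:TensorPow}.
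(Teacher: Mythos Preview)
Your proof is correct and follows essentially the same route as the paper: decompose $T\in\GL^+(V)$ as a positive scalar times an element of $\SL(V)$, apply $k$-homogeneity to the scalar factor, apply $\SL(V)$-equivariance, and use that scalars $\mu$ act on $W\subset V^{\otimes d}$ by $\mu^d$. The paper compresses your extension argument into a single remark (at the start of the section it already records that $\rho$ is the restriction of a polynomial $\GL(V)$-representation, and the proof simply says ``we used the assumption $W\subset V^{\otimes d}$''), so your longer justification via isotypic components is unnecessary---indeed, any $\SL(V)$-invariant subspace of $V^{\otimes d}$ is automatically $\GL(V)$-invariant because scalar matrices act as scalars on all of $V^{\otimes d}$ and hence preserve every subspace.
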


\begin{proof}
	Let $T \in  \mathrm{GL}^+(V).$ Then $(\det T)^{-\frac{1}{n}} T \in \mathrm{SL}(V)$ and by homogeneity 
	$$ \Phi(T K) = (\det T)^{\frac{k}{n}} \cdot \Phi\left((\det T)^{-\frac{1}{n}} T K\right).$$ By $\mathrm{SL}(V)$-invariance  the right-hand side is equal to 
	$$(\det T)^{\frac{k}{n}} \cdot \rho\left((\det T)^{-\frac{1}{n}} T \right) \Phi( K)  = \det(T)^{-m} \rho(T) \Phi(K).$$ For the last equality, we used the assumption $W\subset V^{\otimes d}.$ 
\end{proof}

By $\SL(V)$-invariance and Proposition~\ref{prop:KlainMink}, our Minkowski valuation $\Phi$ is determined by the value of its Klain section on a single $k$-dimensional linear subspace $E$. 
It is at this point  convenient to fix a euclidean structure on $V\simeq \RR^n$. This choice normalizes the Lebesgue measure on $V$ and all of its subspaces.  
Fix  $E=\RR^k\times \{0\}\subset \RR^n$ and let $L_0$ be the convex body in $W$ with the property that 
$$ \Klain_\Phi(E)(K)=  \vol_k(K) L_0 
$$ for all $K\in \calK(E)$.

\begin{lemma}
	\label{lemma:invariant formula}
	Let  $T \in \GL_n^+(\RR)$ be  such that $TE=E.$ Then we have 
	$$ |\det (T|_{E})| L_0 = \det(T)^{-m} \rho(T) L_0. $$
\end{lemma}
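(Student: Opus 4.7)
The plan is to apply both Lemma~\ref{lemma: weak GL+ equivariant} and the defining property of the Klain section to the same convex body $K\subset E$, then compare the two expressions. Since $T$ stabilizes $E$, both evaluations stay inside $E$, which makes the argument essentially a one-line computation.

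More precisely, I would proceed as follows. Fix any convex body $K\in \calK(E)$ with $\vol_k(K)>0$, say the unit ball in $E$. By the definition of $L_0$ as the image of the Klain section at $E$,
\[
\Phi(K) \;=\; \vol_k(K)\, L_0.
\]
Since $TE=E$, the convex body $TK$ also lies in $\calK(E)$, so applying the same formula gives
\[
\Phi(TK) \;=\; \vol_k(TK)\, L_0 \;=\; \bigl|\det(T|_E)\bigr|\,\vol_k(K)\, L_0,
\]
where we used that the restriction $T|_E\colon E\to E$ scales the $k$-dimensional Lebesgue measure by $|\det(T|_E)|$. On the other hand, Lemma~\ref{lemma: weak GL+ equivariant} applied to $T\in\GL^+_n(\RR)$ yields
\[
\Phi(TK) \;=\; \det(T)^{-m}\, \rho(T)\, \Phi(K) \;=\; \det(T)^{-m}\,\vol_k(K)\, \rho(T)\, L_0.
\]
Equating the two expressions for $\Phi(TK)$ and cancelling the positive factor $\vol_k(K)$ produces
\[
\bigl|\det(T|_E)\bigr|\, L_0 \;=\; \det(T)^{-m}\, \rho(T)\, L_0,
\]
which is the claimed identity.

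There is essentially no obstacle here: the statement is a direct consistency check between the two ways of transforming $\Phi(K)$ for $K\subset E$ under an element $T$ that preserves $E$. The only small point to watch is that the identity $\Phi(K)=\vol_k(K)L_0$ genuinely holds for every $K\in\calK(E)$ (not just those of unit volume), which follows because $\Klain_\Phi(E)\in\MVal_k^+(E,W)$ is $k$-homogeneous and Lemma~\ref{lemma:nHomMink} forces it to be $\vol_k(\cdot)\,L_0$; this is precisely how $L_0$ was defined.
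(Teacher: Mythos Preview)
Your proof is correct and follows essentially the same approach as the paper: both compute $\Phi(TK)$ in two ways—via the Klain section definition of $L_0$ and via Lemma~\ref{lemma: weak GL+ equivariant}—and equate the results. The only cosmetic difference is that the paper normalizes $\vol_k(K)=1$ from the outset, whereas you keep $\vol_k(K)$ arbitrary and cancel it at the end.
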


\begin{proof}
	Let $K \subset E$ be a convex body with $\vol_k(K)=1.$ By the definition of $L_0$ we have 
	$$ \Phi(T K) = \vol_k(T K) \cdot L_0 = |\det(T|_{E })|  \cdot L_0. $$
	On the other hand, using Lemma  \ref{lemma: weak GL+ equivariant} we obtain
	$$ \Phi(T K) = \det(T)^{-m}\rho(T) \Phi(K) = \det(T)^{-m}  \rho(T) L_0  $$
	and hence the claim.
\end{proof}

Let  $e_1, \dots , e_n$ denote the standard basis of $\RR^n.$ The previous lemma implies that $L_0$ is invariant under  the subgroup $U_n\subset \GL_n(\RR)$ of unipotent upper triangular matrices. The next goal is to show that $L_0$ is contained in the highest weight space with respect to the standard basis.

\begin{lemma}
	\label{lemma: trivial action of Un on the Klain body}
	If $w \in L_0$, then $U_n$ acts trivially on $w$.
\end{lemma}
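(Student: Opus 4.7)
The plan is to first translate the set-invariance statement $\rho(T) L_0 = L_0$ for $T \in U_n$ from Lemma~\ref{lemma:invariant formula}, and then upgrade set-invariance to pointwise fixity by exploiting compactness of $L_0$ together with the polynomial nature of the $U_n$-action.

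For the first step, observe that every $T \in U_n$ preserves the standard flag, so in particular $TE = E$. Moreover $T$ and $T|_E$ are both upper triangular with ones on the diagonal, so $\det T = 1$ and $|\det (T|_E)| = 1$. Feeding this into Lemma~\ref{lemma:invariant formula} yields
$$ L_0 = \rho(T) L_0 \qquad \text{for every } T \in U_n. $$

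For the second step, fix $w \in L_0$ and a generator $u_{ij}(s)$ of $U_n$ with $i<j$. Decompose $w = \sum_\lambda w^{(\lambda)}$ into weight vectors for the torus $T^n$. By Lemma~\ref{lemma: Un action on weight vectors}, applied to each summand $w^{(\lambda)}$, the curve
$$ s \mapsto \rho(u_{ij}(s)) w = \sum_\lambda \rho(u_{ij}(s))  w^{(\lambda)} $$
is a polynomial map $\RR \to W$. On the other hand, by the first step,
$$ \rho(u_{ij}(s)) w \in \rho(u_{ij}(s)) L_0 = L_0 \qquad \text{for all } s\in\RR, $$
so this polynomial curve takes values in the compact set $L_0$. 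Any bounded polynomial map $\RR \to W$ is constant, so $\rho(u_{ij}(s)) w = w$ for every $s \in \RR$. Since the elements $u_{ij}(s)$, $i<j$, generate $U_n$, we conclude $\rho(u) w = w$ for all $u \in U_n$, as claimed.

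The only real obstacle is recognizing that $L_0$ being $U_n$-invariant as a set does \emph{not} a priori force pointwise fixity; the key input that makes pointwise fixity hold is that $U_n$ is unipotent, so one-parameter subgroups act by polynomial (rather than merely smooth) curves, and polynomial curves in a bounded set are constant. Boundedness comes for free from $L_0$ being a convex body.
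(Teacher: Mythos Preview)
Your proof is correct and follows essentially the same route as the paper's: establish $U_n$-invariance of $L_0$ via Lemma~\ref{lemma:invariant formula}, then use that $s\mapsto \rho(u_{ij}(s))w$ is a polynomial curve (via the weight decomposition and Lemma~\ref{lemma: Un action on weight vectors}) confined to the compact set $L_0$, hence constant. The paper phrases the last step as ``letting $s\to\infty$ forces the higher-order coefficients to vanish,'' which is the same boundedness argument you give.
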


\begin{proof}
	Since $W$ is a $\GL_n(\RR)$-representation we have a decomposition into weight spaces
	$$ W = \bigoplus_{\lambda \in \Lambda} W_\lambda,$$
	where $\Lambda$ denotes the set of weights of $W.$ Now let 
	$$w = \sum\limits_{\lambda \in \Lambda} w_\lambda\in L_0$$
	with $w_\lambda \in W_\lambda.$ The group $U_n$ is generated by the elements $u_{ij}(s) = \id + s E_{ij}$ for $j>i.$ Hence it suffices to show that $\rho(u_{ij}(s))$ acts trivially on $w.$ By Lemma~\ref{lemma: Un action on weight vectors} there are $w_{\lambda,l} \in W_{\lambda + l(\varepsilon_i - \varepsilon_j)}$ for $l\geq 0$ such that $w_{\lambda,0} = w_\lambda$ and 
	$$ \rho(u_{ij}(s))w_\lambda = \sum\limits_{l\geq 0} s^lw_{\lambda,l}. $$
	Note that this is a finite sum since $W$ has finite dimension. Hence 
	$$ \rho(u_{ij}(s)) w = \sum\limits_{\lambda \in \Lambda} \sum\limits_{l\geq 0}  s^lw_{\lambda,l}  = \sum\limits_{\lambda \in \Lambda} w_{\lambda,0} + s \sum\limits_{\lambda \in \Lambda} w_{\lambda,1 } + \cdots $$
	Since $L _0$ is invariant under $U_n$, we have $\rho(u_{ij}(s)) w \in L_0$ for all $s \in \RR.$ But $L_0$ is compact and therefore letting $s \to \infty$ we see that $\sum\limits_{\lambda \in \Lambda} w_{\lambda, i}$ has to be zero for $i >0$. Hence
	$$\rho(u_{ij}(s)) w = \sum\limits_{\lambda \in \Lambda} w_{\lambda,0} = \sum\limits_{\lambda \in \Lambda} w_\lambda = w. $$
\end{proof}

\begin{corollary}
	\label{corollary: klain body is a line segment}
	There is a highest weight vector $w \in W$ such that $L_0=[-w,w].$ 
\end{corollary}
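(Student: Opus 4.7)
The plan is to combine two properties of $L_0$: (i) its origin-symmetry, coming from the assumption $\Phi=-\Phi$, and (ii) the pointwise $U_n$-invariance established in Lemma~\ref{lemma: trivial action of Un on the Klain body}. By the theorem of the highest weight, these together force $L_0$ to lie in a one-dimensional subspace, from which the conclusion will follow by elementary considerations.

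First, I would record that $L_0=-L_0$. Indeed, the assumption $\Phi=-\Phi$ and the definition of $L_0$ via $\Klain_\Phi(E)(K)=\vol_k(K) L_0$ for $K\in \calK(E)$ immediately imply $\vol_k(K) L_0=-\vol_k(K) L_0$ for every convex body $K\subset E$ with positive $k$-volume, hence $L_0=-L_0$.

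Next, by Lemma~\ref{lemma: trivial action of Un on the Klain body} every point $w\in L_0$ is fixed by every element of $U_n$, so $L_0\subset W^{U_n}$. Since $W$ is an irreducible rational $\GL(V)$-representation (the irreducible $\SL(V)$-representation $\rho$ extends to such a representation by Proposition~\ref{prop:polySL}), part (a) of Theorem~\ref{thm:hwt} tells us that $W^{U_n}$ is one-dimensional and coincides with the highest weight space $W_\lambda$. Fix any nonzero highest weight vector $w_0\in W_\lambda$. Then $L_0\subset \RR w_0$, so as a convex body (a compact convex subset) of a one-dimensional space we can write $L_0=[aw_0,bw_0]$ for some $a\leq b$. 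The symmetry $L_0=-L_0$ forces $a=-b$, so $L_0=[-tw_0,tw_0]$ for some $t\geq 0$.

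It remains to exclude $t=0$. If $L_0=\{0\}$, then the Klain section of $\Phi$ vanishes on $E$, and by the $\SL(V)$-invariance of $\Phi$ and the transitivity of $\SL(V)$ on $\Grass_k(V)$ it vanishes on every $k$-dimensional subspace. By Proposition~\ref{prop:KlainMink} this would imply $\Phi\equiv 0$, contradicting the non-triviality of $\Phi$. Hence $t>0$, and setting $w:=tw_0$, which is again a highest weight vector, we obtain $L_0=[-w,w]$, as desired.

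Among these steps the only genuine content is the combination of $U_n$-invariance with the one-dimensionality of $W^{U_n}$; the remaining items (origin symmetry, one-dimensional convex bodies, non-triviality excluding $t=0$) are routine. Nothing here should present a real obstacle.
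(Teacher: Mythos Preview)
Your proof is correct and follows essentially the same route as the paper: use Lemma~\ref{lemma: trivial action of Un on the Klain body} to place $L_0$ inside $W^{U_n}$, invoke Theorem~\ref{thm:hwt}(a) to see this space is the one-dimensional highest weight space, and then use $\Phi=-\Phi$ to conclude $L_0=[-w,w]$. Your additional step excluding $t=0$ via non-triviality and Proposition~\ref{prop:KlainMink} is a welcome detail that the paper leaves implicit.
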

\begin{proof}
	We have just seen that  $L_0 \subset W^{U_n}$.  But  $W^{U_n}$ equals by Theorem~\ref{thm:hwt} the highest weight space, which is always of dimension $1.$ Since $\Phi = -\Phi$ we have $L_0 = [-w,w]$ for a highest weight vector $w.$
\end{proof}

It remains to determine the highest weight of the representation $W.$ 

\begin{proposition}
	\label{proposition: W = lambdakV}
	The highest weight of $W$ is 
	$$\lambda = (m+1)(\varepsilon_1 + \dots + \varepsilon_k) +m(\varepsilon_{k+1} + \dots + \varepsilon_n).$$ Consequently, $W\simeq \largewedge^k V$ as $\mathrm{SL}_n(\RR)$-representations.
\end{proposition}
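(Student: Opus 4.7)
The plan is to read off the weight of the highest weight vector $w$ (given by Corollary~\ref{corollary: klain body is a line segment}) by feeding a sufficiently rich family of elements $T$ into the identity of Lemma~\ref{lemma:invariant formula}. The natural candidates are the diagonal matrices with strictly positive entries, since these lie in $\GL_n^+(\RR)$, preserve $E = \RR^k \times \{0\}$, and act on weight vectors by an explicit monomial character of the torus $T^n$.

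More precisely, I would write $L_0 = [-w,w]$ for a highest weight vector $w$ of weight $\lambda = p_1 \varepsilon_1 + \cdots + p_n\varepsilon_n$, with $p_1\ge \cdots \ge p_n$ (dominance). For any $T = \mathrm{diag}(t_1,\ldots,t_n)$ with all $t_i > 0$, I have $\rho(T)w = t_1^{p_1}\cdots t_n^{p_n} w$ (and this scalar is positive, so $\rho(T)[-w,w] = t_1^{p_1}\cdots t_n^{p_n} [-w,w]$), while $|\det(T|_E)| = t_1\cdots t_k$ and $\det(T)^m = (t_1\cdots t_n)^m$. Lemma~\ref{lemma:invariant formula} then forces the monomial identity
\[ t_1^{p_1}\cdots t_n^{p_n} \;=\; t_1\cdots t_k \cdot (t_1\cdots t_n)^m \]
for all $t_1,\ldots,t_n > 0$. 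Matching exponents of each $t_i$ separately gives $p_i = m+1$ for $1\le i \le k$ and $p_i = m$ for $k+1 \le i \le n$, which is the claimed highest weight. The ordering $p_1 \ge \cdots \ge p_n$ is automatically satisfied, giving a consistency check.

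For the final identification as $\SL(V)$-representations, I would recall from Example~\ref{ex: highest weight} that $(m+1)(\varepsilon_1+\cdots+\varepsilon_k) + m(\varepsilon_{k+1}+\cdots+\varepsilon_n)$ is the highest weight of the irreducible polynomial $\GL(V)$-representation $\largewedge^k V \otimes (\largewedge^n V)^{\otimes m}$. Since $\largewedge^n V$ restricts to the trivial character of $\SL(V)$, the proposition on irreducible $\SL(V)$-representations versus highest weights in Section~\ref{sec:SL} yields $W \simeq \largewedge^k V$ as $\SL(V)$-representations.

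I do not anticipate a serious obstacle here: the lemma gives the constraint in the precise form needed, the diagonal subgroup of the stabilizer of $E$ is already enough to detect the full weight, and the reduction from $\GL(V)$- to $\SL(V)$-representations is standard. The only mildly delicate point is the sign: one has to note that, because all $t_i$ are positive, $\det T > 0$ and the scalar $t_1^{p_1}\cdots t_n^{p_n}$ is positive, so the identity of centrally symmetric segments in Lemma~\ref{lemma:invariant formula} genuinely translates into an equality of positive scalars — no absolute values obstruct reading off the exponents.
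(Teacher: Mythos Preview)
Your proof is correct and follows essentially the same route as the paper: plug positive diagonal matrices into Lemma~\ref{lemma:invariant formula}, compare with the torus action on the highest weight vector to read off the exponents, and then invoke Example~\ref{ex: highest weight} together with the $\GL(V)$-to-$\SL(V)$ reduction to identify $W$ with $\largewedge^k V$. The only cosmetic difference is that you match exponents of each $t_i$ explicitly, whereas the paper writes down the resulting scalar action and quotes the theorem of the highest weight directly.
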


\begin{proof}
	We compute the action of the $n$-dimensional torus $T^n$ on the highest weight vector $w$ to find the highest weight of $W.$ Let 
	$$ T = \begin{pmatrix}
		t_1 & & \\
		& \ddots & \\
		& & t_n
	\end{pmatrix} \in T^n. $$
 The highest weight $p_1\varepsilon_1+ \cdots + p_n\varepsilon_n$ of $W$  satisfies
 $$ \rho(T) w = t_1^{p_1} \cdots t_n^{p_n}  w, \quad T\in T_n. $$
At the same time, by Lemma \ref{lemma:invariant formula}, we have 
	$$ t_1 \cdots t_k [-w,w] = (t_1 \cdots  t_n)^{-m} \rho(T)[-w,w],$$
	provided all $t_i$ are positive. Hence 
$$		\rho(T)w =  (t_1 \cdots t_k)^{m+1}\cdot (t_{k+1} \cdots t_n)^mw$$
for all $T$.
Thus 	Example \ref{ex: highest weight}  and the theorem of the highest weight (Theorem~\ref{thm:hwt}) imply
	$$ W \simeq (\largewedge^n \RR^n)^{\otimes m} \otimes \largewedge^k\RR^n $$
	as $\GL_n(\RR)$-representations. Since the factor $(\largewedge^n \RR^n)^{\otimes m}$ is just multiplication with a power of the determinant, $W\simeq \largewedge^k\RR^n$ as  $\mathrm{SL}_n(\RR)$-representations. 
\end{proof}

From now on we assume that $W=\largewedge^k \RR^n$  for some $k\in \{1,\ldots, n-1\}$.
Given a  $k$-vector $v\in \largewedge^k \RR^n$ we define a function $f_v$ on  $\Grass_k(\RR^n)$  as follows:
$$f_v(U)= |\langle u_1\wedge \cdots \wedge u_k, v\rangle |,\quad U\in \Grass_k(\RR^n),$$ 
where $u_1,\ldots, u_k$ is an orthonormal basis of $U$ and $\langle \,\cdot\,,\,\cdot\,\rangle$ denotes the usual inner product on $\largewedge^k \RR^n$. Note that this is well-defined, i.e., is independent of the choice of orthonormal basis of $U$.
For  $v\in \largewedge^k \RR^n$, let $h_v$ denote the Klain function of $K\mapsto h_{\Phi K}(v)$. 

\begin{lemma} \label{lemma:h=f}There exists a positive constant $c$ such that 
	$ h_v= c f_v$ for all $k$-vectors $v\in \largewedge^k \RR^n$.
\end{lemma}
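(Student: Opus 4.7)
The plan is to exploit the $\SL(V)$-invariance of $\Phi$ to reduce everything to the Klain body $L_0$ at $E$, which Corollary~\ref{corollary: klain body is a line segment} has already identified as a segment $[-w,w]$ with $w$ a highest weight vector of $W=\largewedge^k\RR^n$. Because the highest weight space of $\largewedge^k\RR^n$, relative to the fixed torus and basis $e_1,\dots,e_n$, is one-dimensional and spanned by $e_1\wedge\cdots\wedge e_k$, there is a scalar $c\geq 0$ with $w=c\,(e_1\wedge\cdots\wedge e_k)$. If $c$ were $0$ then $L_0=\{0\}$, and the $\SL(V)$-transitivity on $\Grass_k(\RR^n)$ together with Proposition~\ref{prop:KlainMink} would force $\Phi=0$, contradicting the standing non-triviality hypothesis. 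Hence $c>0$, and I claim this is the desired constant.

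The second step is to transport the Klain body from $E$ to an arbitrary $U\in\Grass_k(\RR^n)$. Fixing $T\in\SL(\RR^n)$ with $TE=U$, let $J>0$ denote the Jacobian of $T|_E\colon E\to U$ with respect to the Euclidean volumes inherited from $\RR^n$. Writing $L_U\in\calK(W)$ for the Klain body at $U$, which exists by Lemma~\ref{lemma:nHomMink}, I compute $\Phi(TK)$ in two ways for $K\subset E$:
$$\Phi(TK)=\rho(T)\Phi(K)=\vol_E(K)\,\rho(T)L_0$$
by $\SL(V)$-invariance, and
$$\Phi(TK)=\vol_U(TK)\,L_U=J\,\vol_E(K)\,L_U$$
by the definition of $L_U$ and the transformation law for the induced Lebesgue measures. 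Comparing the two expressions yields $J\,L_U=\rho(T)L_0=[-\rho(T)w,\rho(T)w]$.

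Taking support values at $v\in\largewedge^k\RR^n$ then gives
$$J\,h_v(U)=|\langle\rho(T)w,v\rangle|=c\,|\langle Te_1\wedge\cdots\wedge Te_k,\,v\rangle|.$$
Since $Te_1,\dots,Te_k$ is a basis of $U$, for any orthonormal basis $u_1,\dots,u_k$ of $U$ one has $Te_1\wedge\cdots\wedge Te_k=\pm J\,(u_1\wedge\cdots\wedge u_k)$, so the right-hand side equals $cJ\,f_v(U)$. Dividing by $J$ produces $h_v(U)=c\,f_v(U)$ with the same $c$ for every $v$ and every $U$, which is the claim. The argument is a direct computation once the highest weight vector has been pinned down; the only bookkeeping point is the Jacobian factor $J$, which cancels neatly on the two sides, so I do not anticipate a real obstacle.
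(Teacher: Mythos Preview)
Your argument is correct and essentially identical to the paper's proof: both fix $T\in\SL_n(\RR)$ with $TE=U$, use Corollary~\ref{corollary: klain body is a line segment} to write $L_0=[-c\,e_1\wedge\cdots\wedge e_k,\;c\,e_1\wedge\cdots\wedge e_k]$, and reduce to the identity $|Te_1\wedge\cdots\wedge Te_k|=J=\vol_k(TK)$ to cancel the Jacobian. The only cosmetic difference is that the paper pulls $v$ back via $T^*$ to evaluate the Klain function at $E$, whereas you push $L_0$ forward via $\rho(T)$ to obtain $L_U$; these are dual formulations of the same computation.
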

\begin{proof}
Recall from Example \ref{ex: highest weight} that the highest weight space of $\largewedge^k \RR^n$ is spanned by $e_1\wedge \cdots \wedge e_k$. Let $U\subset \RR^n$ be a $k$-dimensional linear subspace and let $E \subset \RR^n$ be the subspace spanned by $e_1, \dots , e_k$. Choose $T\in \SL_n(\RR)$ such that $U= T E $. Let $K\subset E$ be  a convex body with $\vol_k(K)=1$. By the  definition of $h_v$ we have 
\begin{align*}  \vol_k(TK) h_v(U) & = h_{\Phi(TK)}(v) =  h_{\Phi K} (T^{*} v) \\
	& = h_{T^{*} v} (E) = c |\langle e_1\wedge \cdots \wedge  e_k , T^{*} v\rangle | \\
	&= c |\langle T e_1\wedge \cdots\wedge T e_k , v\rangle | 
	\end{align*}
Since $|T  e_1\wedge \cdots \wedge  Te_k| = \vol_k(TK)$, the claim follows.
\end{proof}

Let  $C(\Grass_k(\RR^n))$ denote the space of continuous functions on the Grassmannian equipped with the topology of uniform convergence. 

\begin{proposition} \label{prop:dense}Let $k\in \{1,\ldots, n-1\}$. 
	The functions $f_v$, $v\in \largewedge^k\RR^n$,    span a  dense  subspace of $C(\Grass_k(\RR^n))$.
\end{proposition}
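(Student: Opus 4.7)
The plan is to argue by duality. By the Hahn--Banach theorem together with the Riesz representation theorem, density of $\operatorname{span}\{f_v\}$ in $C(\Grass_k(\RR^n))$ is equivalent to the assertion that every signed Radon measure $\mu$ on $\Grass_k(\RR^n)$ satisfying $\int f_v \, d\mu = 0$ for all $v \in \largewedge^k \RR^n$ must vanish. The strategy is to reduce this to the classical injectivity of the spherical cosine transform on even signed measures.

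More precisely, let $S \subset \largewedge^k \RR^n$ denote the unit sphere for the fixed Euclidean inner product and set $N=\binom{n}{k}$. Assigning to $U\in \Grass_k(\RR^n)$ the element $\pm(u_1\wedge\cdots\wedge u_k)$ for any orthonormal basis of $U$ defines the Pl\"ucker map $P\colon \Grass_k(\RR^n)\to S/\{\pm 1\}$, which is a topological embedding since it is a continuous injection between a compact space and a Hausdorff space. I would then let $\tilde\mu$ denote the unique antipodally symmetric signed Radon measure on $S$ whose quotient to $S/\{\pm 1\}$ is the pushforward $P_*\mu$. Because $f_v(U)=|\langle v, P(U)\rangle|$ is insensitive to the sign ambiguity of $P(U)$, this yields
$$\int_{\Grass_k(\RR^n)} f_v\, d\mu = \int_S |\langle v,x\rangle|\, d\tilde\mu(x) \quad \text{for every } v\in \largewedge^k \RR^n,$$
and the right-hand side is precisely the spherical cosine transform of $\tilde\mu$ on $S^{N-1}$ evaluated at $v$.

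At this point I would invoke the classical fact that the spherical cosine transform is injective on even signed Radon measures; this follows from the Funk--Hecke formula, since the transform is $O(N)$-equivariant and acts on the isotypic component of spherical harmonics of degree $d$ by a scalar that vanishes only in odd degrees. The vanishing assumption thus forces $\tilde\mu=0$, and since $P$ is an embedding this in turn gives $\mu=0$. I do not foresee a real technical obstacle along this route; what is conceptually worth underlining is that the density result does not contradict Theorem~\ref{thm:imageKlain}: the span $\operatorname{span}\{f_v\}$ is strictly larger than the image of the Grassmannian cosine transform considered there, since $v$ is allowed to range over non-decomposable elements of $\largewedge^k \RR^n$, and exactly this enrichment is what buys the density.
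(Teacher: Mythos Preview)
Your proof is correct and follows essentially the same strategy as the paper: embed the Grassmannian via the Pl\"ucker map into the unit sphere of $\largewedge^k\RR^n$ and reduce to the classical injectivity of the spherical cosine transform. The only difference is that the paper argues on the primal side---extending an even continuous function from the closed Pl\"ucker image to the ambient sphere and then approximating it there by linear combinations of $|\langle\,\cdot\,,v\rangle|$---whereas you argue on the dual side via Hahn--Banach and pushforward of measures; these are standard reformulations of one another.
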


\begin{proof}
	Let us first point out that the statement of the theorem is well-known in the cases $k=1$ and $k=n-1$. In fact,  these cases are an immediate consequence of the  injectivity of the spherical cosine transform   on even functions (see, e.g., \cite{Groemer:Harmonics}).
	
	We assume therefore from now on that $1<k<n-1$. Let $\Grass_k^+(\RR^n)$ denote the Grassmannian of oriented $k$-dimensional linear subspaces. Let  $\iota \colon \Grass_k^+(\RR^n) \to \largewedge^k \RR^n$ denote the Pl\"ucker embedding, 
	$$ \iota(U) = u_1\wedge\cdots \wedge u_k, \quad U\in \Grass_k^+(\RR^n),$$
	where $u_1,\ldots, u_k$ is a positively oriented orthonormal basis of $U$. 
	
	The functions on   the usual Grassmannian are  in one-one correspondence with the even functions on the oriented Grassmannian. In fact, the latter are pullbacks of the former under the canonical projection $\pi\colon \Grass_k^+(\RR^n)\to \Grass_k(\RR^n)$. 
	Since the image of the Pl\"ucker embedding is a closed subset, every continuous function on $\Grass_k^+(\RR^n)$ is the restriction of some continuous function on $S(\largewedge^k \RR^n)$, the unit sphere in $\largewedge^k\RR^n$. The same conclusion holds, if we consider only even functions.
	
	Let $g$ be an even continuous function on $\Grass_k^+(\RR^n)$. Choose an even function $\wt g$ on the sphere $S(\largewedge^k \RR^n)$ such that $\iota^* \wt g = g$. It follows from the special case $k=1$ of Proposition~\ref{prop:dense} that 
	$\wt g$ can be approximated uniformly  by linear combinations of functions of the form 
	$$\wt f_v(u) = |\langle u, v\rangle|, \quad u\in S(\largewedge^k \RR^n),$$ 
	where $v\in \largewedge^k\RR^n$. Since $\iota^* \wt f_v =\pi^* f_v$, the claim follows.
	
\end{proof}

\begin{proof}[Proof of Theorem \ref{thm:mainA}] Assume that  the representation $W$ is irreducible and  non-trivial.
	If there exists a non-trivial and $\mathrm{SL}(V)$-invariant Minkowski valuation in $\MVal(V,W)$, then by Proposition \ref{prop:nicer}, there exists also  for some $k \in \{0,\dots , n\}$  a  non-trivial and $\SL(V)$-invariant Minkowski valuation  $\Phi \in \MVal_k^+(V,W)$ satisfying  $\Phi = -\Phi.$ 
	
	Under these assumptions, Proposition \ref{proposition: W = lambdakV} implies $W \simeq  \largewedge^k V$ for some  $k\in \{ 1,\ldots, n-1\}$.  Thus, in view of Lemma~\ref{lemma:h=f} and Proposition~\ref{prop:dense}, the existence of  $\Phi$ implies that the image of the Klain embedding $\Val_k^+(\RR^n)\to C(\Grass_k(\RR^n))$  is dense. By Theorem~\ref{thm:imageKlain} this can happen  only for  $k=1$ or $k=n-1$. 
\end{proof}

\section{The reducible case}

\label{sec:reducible}	
We can use Theorem~\ref{thm:mainA} to obtain information about Minkowski valuations with values in $\calK(W)$ where $W$ is reducible. 
We will first show that if $\Phi$ is translation-invariant  and takes values in $\calK(W)$ where $W=\RR^p \oplus V^q \oplus (V^*)^r$, then $\Phi$ splits nicely into a direct sum of Minkowski valuations. Throughout this section $V$ is again a real vector space of dimension $n\geq 2$ and $\vol$ is a fixed positive density on $V$.

\begin{proposition}\label{prop:split}
	Let $W=  \RR^p \oplus V^q \oplus (V^*)^r $ and let $\Phi\colon \calK(V)\to \calK(W)$  be a translation-invariant, $\SL(V)$-invariant, continuous Minkowski valuation. Then there exist convex bodies $L_0,L_n\subset \RR^p$ and a translation-invariant, $\SL(V)$-invariant, continuous Minkowski valuation $\Psi\colon \calK(V)\to \calK(V^q\oplus (V^*)^r)$ such that
	$$ \Phi(K)= L_0 + \Psi(K) + \vol(K) L_n$$
\end{proposition}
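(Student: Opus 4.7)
The plan is to decompose $\Phi$ into homogeneous components $\Phi_0, \dots, \Phi_n \in \MVal(V, W)$, show that $\Phi_0 \equiv L_0$ and $\Phi_n(K) = \vol(K) L_n$ lie in the trivial summand $\RR^p$, and show that $\Phi_i(K) \subset V^q \oplus (V^*)^r$ for $1 \leq i \leq n-1$. Setting $\Psi := \Phi_1 + \cdots + \Phi_{n-1}$ will then yield the decomposition $\Phi(K) = L_0 + \Psi(K) + \vol(K) L_n$ as a Minkowski sum.

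For the decomposition by degree, I would apply McMullen's theorem to the real-valued valuations $K \mapsto h_{\Phi K}(\eta)$ for each $\eta \in W^*$, obtaining homogeneous components $\phi_{\eta, i}(K)$. The technical point is that for each fixed $K$ and $i$ the map $\eta \mapsto \phi_{\eta, i}(K)$ is sublinear, so that it defines the support function of a convex body $\Phi_i(K) \subset W$; this can be established by adapting the method already used in the proof of Proposition~\ref{prop:nicer}, realizing the lowest nonvanishing component as a pointwise limit of sublinear functions of $\eta$ and iterating on the remaining degrees. Each resulting $\Phi_i \in \MVal_i(V, W)$ inherits $\SL(V)$-invariance from $\Phi$, and $\Phi = \Phi_0 + \cdots + \Phi_n$.

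Identification of the end-point components is immediate from $\SL(V)$-invariance. The constant convex body $\Phi_0 \equiv L \subset W$ satisfies $\rho(T) L = L$ for all $T \in \SL(V)$; since the non-zero $\SL(V)$-orbits on $V$ and $V^*$ are unbounded for $n \geq 2$, the projections of $L$ onto the summands $V^q$ and $(V^*)^r$ must equal $\{0\}$, so $L \subset \RR^p$, giving $L_0$. By Lemma~\ref{lemma:nHomMink}, $\Phi_n(K) = \vol(K) L_n$ for some $L_n \subset W$, and the same orbit argument applied to the $\SL(V)$-invariant body $L_n$ yields $L_n \subset \RR^p$.

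For the intermediate components, I would show $\Phi_i(K) \subset V^q \oplus (V^*)^r$ by verifying that $h_{\Phi_i K}(\eta_1, 0, 0) = 0$ for every $\eta_1 \in (\RR^p)^*$. Since $\SL(V)$ acts trivially on $\RR^p$, the real-valued valuation $K \mapsto h_{\Phi K}(\eta_1, 0, 0)$ is $\SL(V)$-invariant. The subspace of $\SL(V)$-invariant elements of $\Val(V)$ is $\GL(V)$-invariant because $\SL(V)$ is normal in $\GL(V)$; Alesker's irreducibility theorem together with the fact that $\SL(V)$ acts non-trivially on $\Val_i^\pm(V)$ for $1 \leq i \leq n-1$ then forces this invariant subspace to vanish in those degrees. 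Hence every $\SL(V)$-invariant, translation-invariant, continuous real-valued valuation is of the form $c_0 + c_n \vol$, whose $i$-homogeneous part vanishes for $1 \leq i \leq n-1$. Consequently $h_{\Phi_i K}(\eta_1, 0, 0) = 0$ for every $\eta_1$ and $K$, which is equivalent to $\Phi_i(K) \subset V^q \oplus (V^*)^r$. The main obstacle is the first step, the homogeneous decomposition of $\Phi$ as a Minkowski sum of homogeneous Minkowski valuations.
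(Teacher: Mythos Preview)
Your approach has a genuine gap at precisely the point you flag as the main obstacle: the homogeneous decomposition of $\Phi$ into a Minkowski sum $\Phi_0+\cdots+\Phi_n$ of homogeneous Minkowski valuations. The trick from Proposition~\ref{prop:nicer} extracts only an extreme homogeneous component as a pointwise limit of support functions, and hence as a support function. Iteration breaks down immediately: to reach the next degree you would need to work with $h_{\Phi K}(\eta)-h_{\Phi_{i_0}K}(\eta)$, and a difference of sublinear functions is in general not sublinear, so there is no reason for the individual maps $\eta\mapsto\psi_{\eta,i}(K)$ with $1\le i\le n-1$ to be support functions. A McMullen-type decomposition for Minkowski valuations is not known to hold.

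The paper's proof circumvents this entirely by never asserting that the intermediate $\psi_i(\,\cdot\,,\xi)$ are support functions. It first proves $\psi_i(K,(t,0))=0$ for all $t\in(\RR^p)^*$ (essentially your third step), and then shows that the \emph{sum} $\sum_{i=1}^{n-1}\psi_i(K,\xi)$ depends only on the $(V^q\oplus(V^*)^r)^*$-component of $\xi$. The argument introduces the defect $\mu_i(K,\xi)=\psi_i(K,(t,\lambda))-\psi_i(K,(0,\lambda))$, uses sublinearity of $h_{\Phi K}$ to obtain $\mu(K,\xi)\le 0$, and then runs a downward induction on degree, combining Klain's theorem with this sign to force each $\mu_i$ to vanish. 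The outcome is $\sum_i\psi_i(K,\xi)=h(\pi_2\Phi K,\xi)$, so one may simply take $\Psi=\pi_2\circ\Phi$; no splitting of the middle part into homogeneous Minkowski valuations is needed.
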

\begin{proof}
	
	There exist Minkowski valuations $\Phi_0,\Phi_n\colon\calK(V)\to \calK(W)$ of degrees $0$ and $n$  and for each degree $i\in \{1,\ldots, n-1\}$ a family of  valuations  $\psi_i(\,\cdot\,, \xi) \in \Val_i(V)$,  $\xi\in W^*$, 
	such that 
	\begin{equation}\label{eq:hom_decomp} h( \Phi K, \xi)=  h(\Phi_0K, \xi) + \sum_{i=1}^{n-1} \psi_i(K,\xi)+ h(\Phi_nK,\xi)\end{equation}
	for all $K$.

	Let $\pi$ denote a projection onto one of the summands $V$ or $V^*$ of $W$. Then $\pi \circ \Phi_i$, $i=0,n$, are trivial by Ludwig's theorem (Theorem~\ref{thm:Ludwig}). Since  there exist convex bodies $L_0$ and $L_n$ such that $\Phi_0(K)= L_0$ and $\Phi_n(K)= \vol(K) L_n$ this implies that $L_0,L_n\subset \RR^p$.	
	
	Let us from now on view $W^*$ as $\RR^p \oplus (V^q\oplus (V^*)^r)^*$ and let us write $\xi= (t,\lambda)\in W^*$ accordingly. 
	Fix $t\in \RR^p$.   
	Then $K\mapsto h(\Phi K, (t,0))$ is an $\SL(V)$-invariant, translation-invariant valuation, hence
	there  exist constants such that $h(\Phi K, (t,0))= c_0 + c_n\vol(K)$. It follows that 
	\begin{equation} \label{eq:proj_pi_1} \psi_i(K,(t,0))=0 \quad \text{for all } t\in \RR^p.\end{equation}

	Put $\mu_i(K,\xi)= \psi_i(K,(t,\lambda))- \psi_i(K,(0,\lambda))$, $\mu(K,\xi)= \sum_{i=1}^{n-1} \mu_i(K,\xi)$ and $\psi(K,\xi)= \sum_{i=1}^{n-1} \psi_i(K,\xi)$. Our goal is to show that $\mu(K,\xi)=0$ for all $K$ and $\xi$. If we can accomplish this, the proposition is proved, because then $\psi(K,\xi)=h(\pi_2 \Phi K , \xi)$, where $\pi_2$ denotes the projection $W\to V^q\oplus (V^*)^r$. 
	
	If $K$ is a   convex body contained in  a hyperplane, then $\xi \mapsto \psi_{n-1}(K,\xi)$ is the  support function of a  convex body. By equation \eqref{eq:proj_pi_1} this convex body must be contained in $V^q\oplus (V^*)^r$. It follows that $\mu_{n-1}(K,\xi)=0$ for all $\xi$. Klain's theorem (Theorem~\ref{thm:Klain}) implies that $\mu_{n-1}(\,\cdot\, ,\xi)$ is an odd valuation.
	
	Let us write 
	$$ h(\Phi K , \xi ) =  \psi(K,\xi)+ \phi(K,\xi)$$
	Recall that $\phi(K,(t,\lambda))= \phi(K,(t,0))$ and $\psi(K,(t,0))=0$. Therefore   
	$$   h(\Phi K , (t,\lambda)) \leq  h(\Phi K , (t,0))  + h(\Phi K , (0,\lambda))$$
	clearly  implies 
	$$ \psi(K,(t,\lambda))+ \phi(K,(t,0))  \leq   \phi(K,(t,0))  + \psi( K , (0,\lambda)).$$
	and thus 
	\begin{equation}\label{eq:psi_sign} \mu(K,\xi) \leq 0.\end{equation}
	Replacing $K$ by $sK$ and letting $s\to \infty$, this equation shows that  $\mu_{n-1}(K,\xi)\leq 0$ for all $K$ and $\xi$. But $\mu_{n-1}(K,\xi)$ is  an odd valuation. Therefore $\mu_{n-1}(K,\xi)=0$ for all $K$ and $\xi$. 
	
	Suppose that $\mu_i(\,\cdot\,,\xi)=0$ for all $\xi \in W^*$ and $i>j$. Let $K$ be a  convex body contained in a linear subspace of dimension $j$. Then $\psi_i(K,\xi)=0$ for all $i>j$ and thus $\xi \mapsto \psi_j(K,\xi)$ is the support function of a convex body. Again by \eqref{eq:proj_pi_1} this convex body is contained in $V^q\oplus (V^*)^r$ and  so $\mu_{j}(K,\xi)=0$ for all $\xi$. By Klain's theorem, $ \mu_j(\,\cdot\,,\xi)$ is an odd valuation. Since  $\mu_i(\,\cdot\,,\xi)=0$ for all $i>j$,  equation \eqref{eq:psi_sign} implies as before that $\mu_j(K,\xi)\leq 0$ for all $K$ and $\xi$. Therefore $\mu_i(K,\xi)=0$. We conclude that $\mu(K,\xi)=0$ for all $K$ and $\xi$. 	
\end{proof}

\begin{lemma} \label{lemma:split} Let 
	$\Phi\colon \calK(V)\to \calK(V^q\oplus (V^*)^r)$ be a translation-invariant, $\SL(V)$-invariant, continuous Minkowski valuation. There exist  translation-invariant, $\SL(V)$-invariant, continuous Minkowski valuations $\Phi_1\colon \calK(V)\to \calK(V^q)$ and $\Phi_{n-1}\colon \calK(V)\to \calK((V^*)^r)$ such that 
	$$ \Phi(K)=\Phi_1(K)+ \Phi_{n-1}(K)$$
	for all convex bodies $K\in \calK(V)$. Moreover, $\Phi_1$ and $\Phi_{n-1}$ are homogeneous of degree $1$ and $n-1$.	
\end{lemma}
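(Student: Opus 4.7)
The plan is to decompose $\Phi$ into its homogeneous components and then apply Ludwig's theorem componentwise to each direct summand of the target, using that the coordinate projections of $V^q$ and $(V^*)^r$ are $\SL(V)$-equivariant linear maps.

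First I would invoke McMullen's homogeneous decomposition for Minkowski valuations to write $\Phi = \Phi_0 + \Phi_1 + \cdots + \Phi_n$, where each $\Phi_i$ is an $i$-homogeneous, translation-invariant, continuous Minkowski valuation. Substituting $tK$ into the $\SL(V)$-invariance identity and matching powers of $t$ shows that each $\Phi_i$ inherits $\SL(V)$-invariance. The component $\Phi_0$ is a constant convex body $L_0 \subset V^q \oplus (V^*)^r$, and by Lemma~\ref{lemma:nHomMink} we have $\Phi_n K = \vol(K)L_n$ for some $L_n \subset V^q \oplus (V^*)^r$. Both $L_0$ and $L_n$ are $\SL(V)$-invariant. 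Since $\SL(V)$ acts transitively on $V\setminus\{0\}$ and on $V^*\setminus\{0\}$ when $n\geq 2$, any non-zero point of $V^q\oplus(V^*)^r$ has unbounded orbit; hence $L_0 = L_n = \{0\}$.

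For the intermediate components $\Phi_i$ with $i\in\{1,\ldots,n-1\}$, the key observation is that the projections $\pi_{V^q}\colon V^q\oplus(V^*)^r\to V^q$ and $\pi_{(V^*)^r}$, together with the further coordinate projections $p_j\colon V^q\to V$ and $q_k\colon (V^*)^r\to V^*$, are all $\SL(V)$-equivariant linear maps. Composing them with $\Phi_i$ yields translation-invariant, $\SL(V)$-invariant, continuous Minkowski valuations $\calK(V)\to\calK(V)$ and $\calK(V)\to\calK(V^*)$. By Ludwig's theorem (Theorem~\ref{thm:Ludwig}) these must equal $c_{ij}\Delta$ and $d_{ik}\Pi$ respectively. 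Because $\Delta$ is $1$-homogeneous and $\Pi$ is $(n-1)$-homogeneous, matching the degree of homogeneity forces $c_{ij}=0$ for $i\ne 1$ and $d_{ik}=0$ for $i\ne n-1$.

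To conclude, one uses the elementary fact that a convex body in $V^q$ whose projection onto every factor $V$ is $\{0\}$ is itself $\{0\}$, and analogously for $(V^*)^r$. Thus $\Phi_i = \{0\}$ for $i\notin\{1,n-1\}$; the body $\Phi_1 K$ has vanishing $\pi_{(V^*)^r}$-projection and so lies in $\calK(V^q)$; and symmetrically $\Phi_{n-1} K \in \calK((V^*)^r)$. Setting these two components as the asserted $\Phi_1$ and $\Phi_{n-1}$ gives the decomposition $\Phi = \Phi_1 + \Phi_{n-1}$. The only delicate point is the reduction to Ludwig's theorem: one must check that post-composition with an $\SL(V)$-equivariant linear map genuinely preserves translation-invariance, continuity, $\SL(V)$-invariance, and the Minkowski valuation property. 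This is a routine support-function verification, but it is the one step where the full package of hypotheses on $\Phi$ is used, so it deserves care.
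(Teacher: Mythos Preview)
Your argument has a genuine gap at the very first step. McMullen's decomposition applies to \emph{real-valued} valuations: for each $\xi\in W^*$ the map $K\mapsto h_{\Phi K}(\xi)$ decomposes as $\sum_i \psi_i(K,\xi)$ with $\psi_i(\,\cdot\,,\xi)\in\Val_i(V)$. It is \emph{not} known in general that each $\psi_i(K,\cdot)$ is again a support function, i.e.\ that each homogeneous component is itself a Minkowski valuation. Only the extreme nonzero degrees are automatically support functions, since they arise as pointwise limits $\lim_{t\to 0}t^{-i_0}h_{\Phi(tK)}(\xi)$ (and analogously at the top). So you may legitimately conclude that $\Phi_0$ and $\Phi_n$ are Minkowski valuations (and then trivial), and after that $\Phi_1$ and $\Phi_{n-1}$ become the new extreme components and hence Minkowski valuations. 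But for $i\in\{2,\ldots,n-2\}$ you cannot post-compose $\psi_i$ with projections and invoke Ludwig's theorem, because $\psi_i(K,\cdot)$ need not be sublinear and Theorem~\ref{thm:Ludwig} classifies Minkowski valuations, not arbitrary families of real-valued valuations indexed by $\xi$.

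The paper confronts exactly this difficulty. Having established that $\Phi_1$ and $\Phi_{n-1}$ are genuine Minkowski valuations landing in $V^q$ and $(V^*)^r$, it sets $\mu(K,\xi)=\sum_{i=2}^{n-2}\psi_i(K,\xi)$ and shows $\mu\equiv 0$ by exploiting the \emph{subadditivity} of $h_{\Phi K}$. Writing $\xi=\xi'+\xi''$ along the splitting $(V^q)^*\oplus V^r$ and using $h_{\Phi K}(\xi)\le h_{\Phi K}(\xi')+h_{\Phi K}(\xi'')$ together with $h_{\Phi K}(\xi')\le h_{\Phi K}(\xi)+h_{\Phi K}(-\xi'')$ yields two-sided inequalities on $\mu$; rescaling $K\mapsto tK$ and peeling off degrees inductively then forces each $\psi_i$ to vanish. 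Your outline is correct up to and including the identification of $\Phi_1$ and $\Phi_{n-1}$, but to finish you must supply an argument of this kind for the middle degrees rather than assume they are Minkowski valuations.
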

\begin{proof}
Put $W=V^q \oplus (V^*)^r$.  We have  the homogeneous decomposition \eqref{eq:hom_decomp}. Composing with projections onto the summands $V$ or $V^*$ we conclude that $\Phi_0$ and $\Phi_n$ are trivial.
	 Hence for $i=1,n-1$ there exist translation-invariant, $\SL(V)$-invariant,  continuous Minkowski valuations $\Phi_i\colon \calK(V)\to \calK(W)$, such that $\psi_i(K,\xi)= h(\Phi_iK, \xi)$. Composing with projections to the summands $V$ or $V^*$,  Ludwig's theorem (Theorem~\ref{thm:Ludwig}) implies $\Phi_1(K)\subset V^q$ and $\Phi_{n-1}(K)\subset (V^*)^r$ for all $K$. 
	 
	 Put  $\mu(K,\xi)= \sum_{i=2}^{n-2} \psi_i(K,\xi)$. The lemma is proved if we can show that $\mu(K,\xi)=0$ for all $K$ and $\xi$. 
	 Let us write $\xi=\xi'+ \xi''\in W^*= (V^q)^* \oplus V^r$.  Then 
	 $$ h(\Phi K, \xi') = h(\Phi_1K, \xi') +\mu(K,\xi')$$ 
	 and it follows that $\xi'\mapsto \psi_{n-2}(K,\xi')$ is the support function of a convex body in $V^q$. At the same time, again by Ludwig's theorem, the projection of this convex body onto each summand $V$ vanishes. Thus $\psi_{n-2}(K,\xi')=0$. Proceeding inductively, we conclude that $\mu(K,\xi')=0$ for all $\xi'$ and convex bodies $K$. An analogous argument shows that $\mu(K,\xi'')=0$. 
	 
    Writing $\xi=\xi'+ \xi''\in W^*= (V^q)^* \oplus V^r$, we clearly have
	\begin{align*} \psi_1(K, \xi')+ \mu(K,\xi)+ \psi_{n-1}(K,\xi'') & =  h(\Phi K,\xi)\\
		& \leq h(\Phi K,\xi') + h(\Phi K,\xi'')\\
		& = \psi_1(K, \xi')+ \mu(K,\xi')+ \mu(K,\xi'')+ \psi_{n-1}(K,\xi'')
	\end{align*}
	and hence $ \mu(K,\xi)\leq \mu(K,\xi')+ \mu(K,\xi'')$.
	Since $\mu(K,\xi')=\mu(K,\xi'')=0$, we obtain 
	\begin{equation}\label{eq:muleq0}\mu(K,\xi)\leq 0\end{equation}
	for all $K$ and $\xi\in W^*$. Moreover,  
	$$ h(\Phi K,\xi')\leq h(\Phi K, \xi'+ \xi'')+ h(\Phi K,-\xi'')$$ 
	implies 
	\begin{equation}\label{eq:mugeq0} 0\leq  \mu(K,\xi) +    \psi_{n-1}(K,\xi'') + \psi_{n-1}(K,-\xi'').\end{equation}
	Replacing $K$ by $tK$ for $t>0$ and letting $t\to 0$, the inequalities \eqref{eq:muleq0} and \eqref{eq:mugeq0} imply $\psi_2(K,\xi)=0$. Inductively we obtain $\mu(K,\xi)=0$, as desired.
\end{proof}

\begin{proof}[Proof of Corollary~\ref{cor:reducible}]
By Propositions~\ref{prop:comp_red} and \ref{prop:polySL}, every representation $W$ of $\SL(V)$ decomposes into irreducible representations, $W= W_1\oplus \cdots \oplus W_m$. Let $\pi_i$ denote the projection onto the $i$-th summand. By Theorem~\ref{thm:mainA}, the composition  $\pi_i\circ \Phi$ is trivial unless $W_i$ is isomorphic to $\RR$, $V$, or $V^*$. Let $U\subset W$ denote the subspace spanned by summands $W_i$ with this property. Then $U= \RR^p\oplus V^q \oplus (V^*)^r$ for certain numbers $p,q,r$ and $\Phi(K)\subset U$ for all $K$.  Applying now Proposition~\ref{prop:split} and Lemma~\ref{lemma:split} yields the desired splitting.
\end{proof}

Let us close this section with a discussion of examples of translation-invariant, $\SL(V)$-invariant, continuous Minkowski valuations taking values in $\calK(V^p)$ or $\calK((V^*)^p)$. 

Let us first recall that the dual space $\Hom(V,W)^*$ of linear maps from $V$ to $W$ is for arbitrary real finite-dimensional vector spaces $V,W$ naturally isomorphic to $\Hom(W,V)$. Indeed, the trace,
\begin{align}
	\label{eq:trace_pairing}
	\begin{split}
	\Hom(V,W)\times \Hom(W,V)&\to \RR,\\
	  (S,T)&\mapsto \tr(T\circ S),
	 \end{split}
\end{align} 
induces a non-degenerate pairing.
Moreover, let us point out that $V^p$ and $(V^*)^p$ are naturally isomorphic to $\Hom(\RR^p,V)$ and $\Hom(V,\RR^p)$, respectively. 

In the following we use the standard inner product on $\RR^p$ to identify $\RR^p$ with its dual space.
Let $Q$ be a convex body in $\RR^p$. One immediately verifies that 
\begin{equation}\label{eq:Qprojbody} h_{\Pi_Q K}(\xi)= V(K,\ldots, K, h_Q\circ \xi^*), \quad \xi\in \Hom(\RR^p,V),\end{equation}
where $V(K_1,\ldots, K_n)$ denotes the mixed volume for a fixed positive density $\vol$ on $V$ and $\xi^*\in \Hom(V^*,\RR^p)$ is the dual map to $\xi$,  is the support function of a convex body $\Pi_QK$ in $(V^*)^p \simeq \Hom(V,\RR^p)$. Moreover,  $\Pi_Q$ is a translation-invariant, $\SL(V)$-invariant, continuous Minkowski valuation. The operator $\Pi_Q K$ was introduced in \cite{Haddad_etal:LpIsoperimetric} and called the $Q$-projection body of $K$. The special case where $Q$ is the unit cube was previously studied in \cite{Haddad_etal:GenProj}. A more geometric reformulation of \eqref{eq:Qprojbody} is
\begin{equation}\label{eq:Qproj2}
	h_{\Pi_QK}(\xi)= V(K,\ldots, K, \xi(Q)), \quad \xi\in \Hom(\RR^p,V).
\end{equation} 

We can dualize \eqref{eq:Qprojbody} to obtain a family of $1$-homogeneous Minkowski valuations as follows. 
\begin{proposition}
Let $\mu$ be a Borel measure on the unit sphere $S^{p-1}\subset \RR^p$ such that 
\begin{equation}\label{eq:mu_centered} \int_{S^{p-1}} u\, d\mu(u)=0.\end{equation}
 Define for $K\in \calK(V)$
$$ h_{M_\mu K}(\xi)= \int_{S^{p-1}} h_K\circ \xi^*(u) \, d\mu(u), \quad \xi\in \Hom(V,\RR^p),$$
where $\xi^*\in \Hom(\RR^p,V^*)$ is the dual map to $\xi$.  
Then the following properties hold:
\begin{enuma}
\item $h_{M_\mu K}$ is the support function of a convex body $M_\mu K\subset V^p$. 
\item $M_\mu(K+L)= M_\mu K + M_\mu L$ for all convex bodies $K,L\subset V$. 
\item $M_\mu\colon\calK(V)\to \calK(V^p)$ is a translation-invariant, $\SL(V)$-invariant, continuous Minkowski valuation.  
\end{enuma}
\end{proposition}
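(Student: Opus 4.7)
The plan is to verify each of (a), (b), and (c) directly, reducing all claims to corresponding properties of the integrand and exploiting that $\mu$ is non-negative.

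For (a), I would first fix $u\in S^{p-1}$ and observe that $\xi\mapsto h_K(\xi^*(u))$ is a finite, sublinear, continuous function on $\Hom(V,\RR^p)$: the dual map $\xi\mapsto \xi^*(u)$ is linear in $\xi$, and $h_K$ is sublinear and continuous on $V^*$. Since $\mu$ is a finite non-negative measure and the integrand is jointly continuous on the compact set $S^{p-1}$, integration preserves positive homogeneity (trivially) and subadditivity (because $\mu\geq 0$ preserves pointwise inequalities), yielding a finite sublinear function on $\Hom(V,\RR^p)\simeq (V^p)^*$, where the isomorphism is the trace pairing \eqref{eq:trace_pairing}. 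The standard bijection between finite sublinear functions on a finite-dimensional vector space and convex bodies in its dual then delivers the desired convex body $M_\mu K\subset V^p$. Assertion (b) is immediate from $h_{K+L}=h_K+h_L$ and linearity of integration.

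For (c), I would check the four properties in turn. Translation invariance is where the centered hypothesis $\int u\,d\mu=0$ enters: using $h_{K+v}(\eta)=h_K(\eta)+\langle v,\eta\rangle$ with $\eta=\xi^*(u)$ produces an extra term $\int\langle v,\xi^*(u)\rangle\,d\mu(u)=\langle \xi(v),\int u\,d\mu\rangle=0$. $\SL(V)$-invariance (in fact $\GL(V)$-equivariance) follows from the identities $h_{T^{-1}K}(\eta)=h_K((T^*)^{-1}\eta)$ and $(T^*)^{-1}\xi^*=(\xi\circ T^{-1})^*$, which together give $h_{M_\mu(T^{-1}K)}(\xi)=h_{M_\mu K}(\xi\circ T^{-1})$, exactly the support function of $\rho(T)^{-1}M_\mu K$ under the natural action $\rho(T)S=T\circ S$ on $V^p=\Hom(\RR^p,V)$. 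Continuity follows because $K_n\to K$ in the Hausdorff metric implies $h_{K_n}\to h_K$ uniformly on compacta, which passes through the integral, and pointwise convergence of support functions of a uniformly bounded sequence of convex bodies upgrades to Hausdorff convergence. The valuation property reduces, pointwise under the integral, to the classical fact that $K\mapsto h_K(\eta)$ is a valuation for each fixed $\eta$.

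There is no deep obstacle; the work is organized bookkeeping. The only conceptual point worth flagging is to keep track of where each hypothesis on $\mu$ is used: non-negativity is indispensable already in (a), as it is what allows pointwise subadditivity of the integrand to survive integration, while the centered condition plays a role only in the translation-invariance step of (c).
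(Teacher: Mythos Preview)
Your proposal is correct and follows essentially the same approach as the paper: both verify sublinearity in $\xi$ via sublinearity of $h_K$ and non-negativity of $\mu$, obtain Minkowski additivity from $h_{K+L}=h_K+h_L$, deduce translation invariance from the centered hypothesis, check $\SL(V)$-invariance by the identity $(\xi\circ T)^*=T^*\circ\xi^*$ together with the trace pairing, and read off continuity and the valuation property from the corresponding properties of support functions. The only cosmetic difference is that the paper phrases translation invariance as a consequence of (b) applied to $K+\{v\}$, while you compute the extra linear term directly; the underlying calculation is identical.
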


Before we prove this proposition, let us give a more geometric description of $M_\mu K$. Let $p\geq 2$ and suppose that there exists  a convex body $Q\subset \RR^p$ with surface area measure equal  to $p\cdot  d \mu$. By Minkowski's existence theorem \cite[Theorem 8.2.2]{Schneider:BM} such a body exists if and only if  $\mu$ is not concentrated on a great subsphere. Then we may write 
$$  h_{M_\mu K}(\xi)= V(\xi(K),Q,\ldots, Q), \quad \xi\in \Hom(V,\RR^p).$$
This expression should be compared with \eqref{eq:Qproj2}. We  call $M_QK=M_\mu K$ the \emph{$Q$-mean width body} of $K$. 
\begin{proof}
To see (a) just observe that for $\xi_1,\xi_2\in \Hom(V,\RR^p)$ one has
$$ h_K\circ(\xi_1+\xi_2)^*(u)= h_K(\xi_1^*(u) + \xi_2^*(u))\leq h_K\circ \xi_1^*(u) + h_K\circ \xi_2^*(u)$$
and use that $\mu$ is non-negative by assumption.

(b) is straightforward. A consequence of (b) and \eqref{eq:mu_centered} is that $M_\mu$ is translation-invariant. 
The valuation property follows from the well-known fact that $h_{K\cup L}+ h_{K\cap L} = h_K + h_L$ whenever $K\cup L$ is convex. Since continuity is obvious, it remains to show $\SL(V)$-invariance. For  $T\in \SL(V)$ one has
\begin{align*} h_{M_\mu (TK)}(\xi)& = \int_{S^{p-1}} h_{K}\circ  T^* \circ \xi^* (u) \, d\mu(u)=\int_{S^{p-1}} h_{K}\circ  (\xi \circ T)^* (u) \, d\mu(u)\\
	&= h_{M_\mu K}(\xi\circ T) =\sup_{S\in M_\mu K} \tr (\xi\circ T \circ S) \\
	&= h_{TM_\mu K}(\xi),
\end{align*}
where in the penultimate equality we have used \eqref{eq:trace_pairing}. This finishes the proof.
\end{proof}

\section{New  Minkowski valuations for irreducible representations}

Let $V$ be a real vector space of dimension $n  $ at least $2$.

\subsection{Symmetric tensors}\label{sec:symPowers}

Before we present the main construction of this section, let us be more explicit   about certain properties of the symmetric tensor powers. 

For us the symmetric algebra $\Sym V$  of a vector space $V$ is the quotient of the tensor algebra $ TV = \bigoplus_{p= 0}^\infty V^{\otimes p} $ by the two-sided ideal generated by $u\otimes v-v\otimes u$. The grading of the tensor algebra is inherited by $\Sym V = \bigoplus_{p=0}^\infty \Sym^p V$. Given $x\in V$ we write  $x^p =  x\otimes \cdots \otimes x\in V^{\otimes p}$ and by $x^p\in \Sym^pV$ we mean the image of the latter expression under the natural projection $TV\to \Sym V$. Recall that the symmetric tensor powers have the universal  property that they linearize symmetric multilinear maps $V^p \to W$. 

Consider the $\GL(V)$ equivariant pairing $\Sym^p V\times \Sym^p V^*\to \RR$ defined by 
\begin{equation}\label{eq:pairing}((v_1,\ldots, v_p),( \xi_1,\ldots, \xi_p))\mapsto  \sum_{\sigma\in S_n}  \xi_{\sigma(1)} (v_1)\cdots
	\xi_{\sigma(p)} (v_p).\end{equation}
Since this pairing is non-degenerate, we get a $\GL(V)$ equivariant isomorphism $(\Sym^p V)^*\simeq \Sym^p V^*$ that we will often use without explicit  mention.

Let us fix a euclidean inner product on $V$ so that we can identify $V\simeq \RR^n$. Combining  $V^*\simeq V$ with the pairing \eqref{eq:pairing} defines the euclidean inner product
$$ \Sym^p V\times \Sym^p V\simeq \Sym^p V\times \Sym^p V^* \to \RR$$
that we will use in the following. 
Note that  this inner product satisfies
\begin{equation}\label{eq:innerprodTensor} \langle u^p,v^p \rangle = p! \langle u,v\rangle^p 
\end{equation}
for all $u,v\in V$.

\subsection{The main construction}

The goal of this section is to prove Theorem~\ref{thm:newMink}. 
If $\omega$ is a differential $k$-form and $Y$ is a vector field, we denote by $i_Y \omega$  the $(k-1)$-form 
$$ (i_Y\omega)(X_1,\ldots, X_{k-1})=\omega(Y, X_1,\ldots, X_{k-1}).$$
Let $E(x)=x$ denote the Euler vector field on $V$ and let  $\pi\colon V\setminus\{0\}\to \PP_+(V)$ denote the canonical projection. The proof of the following lemma is straightforward and therefore omitted.
\begin{lemma}\label{lem:P+} Let $\omega\in\largewedge^n V^*$ 
be  a fixed non-zero constant differential $n$-form on $V$ and let $f\colon V\setminus\{0\}\to\CC$ be continuous and homogeneous of degree $-n$. Then the following assertions hold: 
	\begin{enuma} \item $f  i_E \omega = \pi^* \omega_f $ 
for  a unique differential $(n-1)$-form $\omega_f$ on $\PP_+(V)$. 
	\item $ T^*\omega_f =  \det T \cdot \omega_{f\circ T} $ for all $T\in \GL(V)$. 
	\end{enuma}
\end{lemma}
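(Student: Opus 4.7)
The plan is to exploit the standard criterion that a differential form $\alpha$ on $V\setminus\{0\}$ is of the form $\pi^*\beta$ for some form $\beta$ on $\PP_+(V)$ if and only if it is horizontal and dilation-invariant, i.e.\ $i_E\alpha = 0$ and $\mathcal{L}_E\alpha = 0$. Uniqueness of $\omega_f$ is then immediate from the surjectivity of $d\pi$, which makes $\pi^*$ injective on forms.

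For (a), horizontality $i_E(f\,i_E\omega) = f\,i_E^2\omega = 0$ is automatic. For dilation invariance I would use Cartan calculus. Since $f$ is homogeneous of degree $-n$, Euler's identity gives $\mathcal{L}_E f = -n f$. Since $\omega$ is a constant top-degree form, $d\omega = 0$ and Cartan's magic formula yields $\mathcal{L}_E\omega = d\,i_E\omega$, which one identifies with $n\omega$ via $\mu_t^*\omega = t^n\omega$. Using $[\mathcal{L}_E, i_E] = i_{[E,E]} = 0$, this gives $\mathcal{L}_E(i_E\omega) = n\, i_E\omega$, and Leibniz then produces
\[ \mathcal{L}_E(f\,i_E\omega) = (-n)\,f\,i_E\omega + f\cdot n\,i_E\omega = 0. \]
Thus $f\,i_E\omega = \pi^*\omega_f$ for a unique $(n-1)$-form $\omega_f$ on $\PP_+(V)$.

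For (b), every $T\in\GL(V)$ permutes rays, hence descends to a diffeomorphism $\bar T$ of $\PP_+(V)$ satisfying $\pi\circ T = \bar T\circ \pi$. The two linear ingredients I would use are $T^*\omega = (\det T)\,\omega$, which holds because $\omega$ is a constant top-degree form, and the fact that $T$ sends the Euler field to itself, $dT(E_x) = T(x) = E_{Tx}$, which together with the preceding identity gives $T^*(i_E\omega) = i_E(T^*\omega) = (\det T)\,i_E\omega$. Pulling back the defining relation $\pi^*\omega_f = f\,i_E\omega$ by $T$ yields
\[ \pi^*(\bar T^*\omega_f) = T^*(f\,i_E\omega) = (f\circ T)\cdot(\det T)\,i_E\omega = (\det T)\,\pi^*\omega_{f\circ T}, \]
and injectivity of $\pi^*$ delivers $\bar T^*\omega_f = (\det T)\,\omega_{f\circ T}$, with the $T^*$ of the statement understood as $\bar T^*$.

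I do not foresee a serious obstacle; the proof is essentially a short exercise in Cartan calculus. The one place for mild care is sign bookkeeping: because $\omega$ is a genuine top-degree form (not a density), the identity $T^*\omega = (\det T)\,\omega$---not $|\det T|\,\omega$---is what produces the signed determinant in (b), and invariance of $E$ under arbitrary linear $T$ makes the argument go through uniformly on all of $\GL(V)$, including orientation-reversing elements.
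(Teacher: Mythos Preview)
The paper omits the proof of this lemma as ``straightforward,'' so there is nothing to compare against; your argument is exactly the standard one the authors presumably had in mind and is correct in substance.

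One small technical remark: the hypothesis is that $f$ is only \emph{continuous}, so the Lie derivative $\mathcal{L}_E f$ and Cartan's formula are not literally available. The fix is immediate: instead of the infinitesimal check $\mathcal{L}_E(f\,i_E\omega)=0$, verify the finite dilation invariance directly. With $\mu_t(x)=tx$ one has $\mu_t^*f = t^{-n}f$ by homogeneity and $\mu_t^*(i_E\omega)=i_E(\mu_t^*\omega)=t^n\,i_E\omega$ (since $(d\mu_t)(E_x)=E_{\mu_t(x)}$), whence $\mu_t^*(f\,i_E\omega)=f\,i_E\omega$. Together with $i_E(f\,i_E\omega)=0$, this is precisely the pointwise criterion for a continuous form to descend along the submersion $\pi$, and uniqueness again follows from injectivity of $\pi^*$. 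Part (b) is unaffected by this issue and your computation there is clean.
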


Recall that $\PP_V = V\times \PP_+(V^*)$. Let $\pi_1\colon \PP_V\to V$ and $\pi_2 \colon \PP_V\to \PP_+(V^*)$ denote the projections to the first and second factor of $\PP_V$. Let $U_0\subset \PP_V$ denote the open subset of all $(x,[\xi])$ with $x\neq 0$. 

\begin{definition}
	Let  
	
	$\vol=\epsilon\otimes \omega\in D(V)\simeq or(V)\otimes \largewedge^n V^*$ be a positive density and $p,q$ be non-negative integers. Let $\phi\in \Sym^pV^* \otimes \Sym^q V$ and $\psi\in \Sym^p V \otimes \Sym^q V^*$. Define two smooth differential $(n-1)$-forms 
	$\omega_1(\phi), \omega_2(\psi)\in or(V)\otimes \Omega^{n-1}(U_0)$ with values in $or(V)$	
	by 
	\begin{align*}
		\omega_1(\phi)|_{(x,[\xi] )} &= \epsilon\otimes  |\langle \phi, x^p \otimes \xi^q \rangle| \langle x,\xi\rangle^{-q} \pi_1^*( i_x \omega)\\
		\omega_2(\psi) |_{(x,[\xi])} &= \epsilon\otimes  |\langle \psi,  \xi ^p \otimes x^q     \rangle| \langle x,\xi\rangle^{-p} \pi_2^*( \langle x,\xi\rangle^{-n} i_\xi \omega^*),
	\end{align*}
where  $\omega^*\in \largewedge^n V$ satisfies $\langle \omega,\omega^*\rangle =1$. 
\end{definition}

\begin{lemma} \label{lemma:invariance}
	The forms $\omega_1(\phi), \omega_2(\psi)$ are well-defined and satisfy 
	$$ T^* \omega_1(\phi) = \omega_1(T^*\phi) \quad{ and }\quad  T^* \omega_2(\psi) = \omega_2(T^*\psi)$$
	 for the natural action  $T(x,[\xi])= (Tx, [T^{-*} \xi])$ of $\SL(V)$ on $\PP_V$.  
\end{lemma}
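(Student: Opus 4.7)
The plan is to prove both assertions by a pointwise computation on $\PP_V$, reducing everything to Lemma~\ref{lem:P+} and the defining properties of the natural $\SL(V)$-actions.

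First I would address well-definedness. Work on the cover $V\times(V^*\setminus\{0\})$ and check that each expression descends to $V\times\PP_+(V^*)=\PP_V$. For $\omega_1(\phi)$, the scalar factor $|\langle\phi,x^p\otimes\xi^q\rangle|\langle x,\xi\rangle^{-q}$ is $0$-homogeneous in $\xi$, while $\pi_1^*(i_E\omega)$ does not involve $\xi$ at all, so both factors descend immediately. For $\omega_2(\psi)$, the scalar factor $|\langle\psi,\xi^p\otimes x^q\rangle|\langle x,\xi\rangle^{-p}$ is likewise $0$-homogeneous in $\xi$. For the remaining piece, fix $x\neq 0$ and apply Lemma~\ref{lem:P+}(a) to the vector space $V^*$ and to the function $f_x(\xi)=\langle x,\xi\rangle^{-n}$, which is homogeneous of degree $-n$; this produces a unique smooth $(n-1)$-form $\widetilde\omega_x$ on $\PP_+(V^*)$ whose pullback is $f_x\,i_E\omega^*$. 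Smoothness in the joint variable $(x,[\xi])$ is inherited from the explicit formula.

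For the equivariance, I would fix $T\in\SL(V)$ and evaluate both sides at $(x,[\xi])$. The computation rests on three identities: (i) $\langle Tx, T^{-*}\xi\rangle = \langle x,\xi\rangle$; (ii) the defining property of the dual action, $\langle\phi,(Tx)^p\otimes (T^{-*}\xi)^q\rangle = \langle T^*\phi, x^p\otimes\xi^q\rangle$ and the analogue for $\psi$; (iii) the fact that since $\omega$ and $\omega^*$ are top forms, $T^*(i_E\omega)=\det T\cdot i_E\omega$ on $V$ and $(T^{-*})^*(i_E\omega^*)=\det(T^{-1})\cdot i_E\omega^*$ on $V^*$, both equal to the identity on $\SL(V)$. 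Combined with $\pi_1\circ T=T\circ\pi_1$, these give $T^*\omega_1(\phi)=\omega_1(T^*\phi)$ directly. For $\omega_2(\psi)$ I would further use $\pi_2\circ T=T^{-*}\circ\pi_2$ and Lemma~\ref{lem:P+}(b) applied to $V^*$ to track the parameter $x$: since $f_x\circ T^{-*}=f_{T^{-1}x}$, the lemma yields $(T^{-*})^*\widetilde\omega_{Tx}=\det(T^{-1})\widetilde\omega_x=\widetilde\omega_x$, which cancels the $Tx$ coming from the base change and produces $T^*\omega_2(\psi)=\omega_2(T^*\psi)$.

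The individual computations are routine; the only mildly delicate point is the bookkeeping in the $\omega_2$ argument, where one must recognize that in $\pi_2^*(\langle x,\xi\rangle^{-n}i_\xi\omega^*)$ the vector $x$ plays the role of a parameter of the descended family $\widetilde\omega_x$, and that the action of $(T^{-*})^*$ shifts this parameter by $T^{-1}$. Once this is set up, the identity follows with the factor $\det T^{-1}$ disappearing precisely because $T\in\SL(V)$.
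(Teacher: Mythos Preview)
Your proposal is correct and follows exactly the approach the paper indicates: use Lemma~\ref{lem:P+} on $V^*$ for the well-definedness and invariance of $\omega_2(\psi)$, and verify $\omega_1(\phi)$ directly from homogeneity and $T^*(i_E\omega)=\det T\cdot i_E\omega$. You have simply spelled out the details that the paper leaves as ``straightforward,'' including the parameter-tracking $(T^{-*})^*\widetilde\omega_{Tx}=\det(T^{-1})\,\widetilde\omega_x$ via part~(b).
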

\begin{proof}
The  well-definedness and invariance of $\omega_2(\psi)$ follows from Lemma~\ref{lem:P+}. The rest is straightforward.
\end{proof}

Observe that after a choice of euclidean inner product on $V\simeq \RR^n$ with volume form 
$\vol=\omega$  we have $\PP_V = \RR^n\times S^{n-1}$ and
	\begin{align*}
	\omega_1(\phi)|_{(x,u)} &=  |\langle \phi, x^p \otimes u^q \rangle| \langle x,u\rangle^{-q} \pi_1^*( i_x \vol),\\
	\omega_2(\psi) |_{(x,u)} &=  |\langle \psi,  u^p \otimes x^q     \rangle| \langle x,u\rangle^{-(n+p)} \pi_2^*( i_u \vol).
\end{align*}

\begin{proof}[Proof of Theorem~\ref{thm:newMink}]
If $K\subset V$ is a convex body containing the origin in the interior, then the 
support of $\nc(K)$ is contained in $\pi^{-1}(\partial K)\subset U_0$. Hence using a suitable cut-off function and Proposition~\ref{prop:nc}, we see that  
$ \int_{\nc(K)} \omega_1(\phi)$ and $\int_{\nc(K)} \omega_2(\psi)$ define
  continuous valuations on $\calK_0(V)$. 

We claim that there exists a convex body $\Phi^{p,q} K\subset \Sym^pV\otimes \Sym^q V^*$ such that 
\begin{equation}\label{eq:convexity} h_{\Phi^{p,q} K}(\phi) = \int_{\nc(K)} \omega_1(\phi)\end{equation}
holds for all $\phi\in \Sym^pV^* \otimes \Sym^q V$.
Clearly,  the right-hand side of \eqref{eq:convexity} is a $1$-homogenous function of $\phi$. 
Let us fix a euclidean inner product on $V\simeq \RR^n$ with volume form $\vol=\omega$. Let $K\subset \RR^n$ be a convex body with smooth and strictly positively curved boundary.  Then $\b \nu\colon \partial K\to \nc(K)$, $\b\nu(x)=(x,\nu_K(x))$, is a diffeomorphism and   $\b\nu ^* \pi_1^*( i_x \vol) = \langle x, \nu(x) \rangle  \vol_{\partial K}$, where  $\vol_{\partial K}$ denotes the riemannian volume form. 
This proves  that 
$$  \int_{\nc(K)} \omega_1(\phi) = 
\int_{\partial K }  |\langle \phi, x^p \otimes \nu(x)^q\rangle | \langle x, \nu(x)\rangle^{1-q} dx$$
 is a sublinear function of $\phi$. By continuity, it follows that the right-hand side of \eqref{eq:convexity} is sublinear for all convex bodies $K\in \calK_0(\RR^n)$. 

An analogous argument using $ \b\nu ^* \pi_2^*( i_u \vol) = \kappa_K(x)  \vol_{\partial K}$
shows the existence of a convex body $\Psi^{p,q}(K)$ in $\Sym^p V^* \otimes \Sym^q V$ such that 
$$h_{\Psi^{p,q} K}(\psi) = \int_{\nc(K)} \omega_2(\psi).$$

Finally, $\SL(V)$-invariance follows from Lemma~\ref{lemma:invariance}. 	
	
\end{proof}

Let us write $\Phi_{V}^{p,q}$ and $\Psi_{V}^{p,q}$ to emphasize the domain of definition. Since there is a canonical density on $V\times V^*$, namely the symplectic volume, we obtain a natural isomorphism $D(V^*)\simeq D(V)^*$. 

\begin{proposition}\label{prop:polarityPhiPsi}
Let 
 $\vol\in D(V)$ be a positive density on $V$ and let $\vol^*\in D(V)^*\simeq D(V^*)$ be defined by $\langle \vol, \vol^*\rangle =1$. 
After the natural identification $V^{**}\simeq V$,  
 the  relations $$ \Phi_{V^*}^{p,q}(K^* ) = \Psi_{V}^{p,q}(K)\quad \text{and}\quad \Psi_{V^*}^{p,q}(K^* ) = \Phi_{V}^{p,q}(K)$$
hold for all $K\in \calK_0(V)$.	
\end{proposition}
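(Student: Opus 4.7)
The plan is to compare the support functions of the two sides via the integral representations on the conormal cycle developed in the proof of Theorem~\ref{thm:newMink}: for every $\phi\in\Sym^p V\otimes\Sym^q V^*$ and every $K\in\calK_0(V)$,
\[
h_{\Phi_{V^*}^{p,q}(K^*)}(\phi)=\int_{\nc(K^*)}\omega_1^{V^*}(\phi),\qquad h_{\Psi_V^{p,q}(K)}(\phi)=\int_{\nc(K)}\omega_2^V(\phi),
\]
where the superscripts on the forms indicate the vector space and density underlying their construction. Since $K\in\calK_0(V)$ forces $\nc(K)$ to be supported in $\PP_V^+$, Lemma~\ref{lem:polar} applies and converts the first integral into $\int_{\nc(K)}F_V^*\omega_1^{V^*}(\phi)$. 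The first identity of the proposition is thereby reduced to the pointwise equality of differential forms
\[
F_V^*\omega_1^{V^*}(\phi)=\omega_2^V(\phi)\qquad\text{on }\PP_V^+.
\]

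To verify this pointwise equality, substitute $\eta=\xi/\langle x,\xi\rangle$ and $y=x$ into the definition of $\omega_1^{V^*}(\phi)$. One immediately obtains $\langle\eta,y\rangle=1$ and $\langle\phi,\eta^p\otimes y^q\rangle=\langle x,\xi\rangle^{-p}\langle\phi,\xi^p\otimes x^q\rangle$, so the scalar prefactor matches that of $\omega_2^V(\phi)$ up to the single factor $\langle x,\xi\rangle^{-n}$. The essential step is the pullback of $i_\eta\omega^*$ under the map $g(x,\xi)=\xi/\langle x,\xi\rangle$ obtained by composing $F_V$ with the first-factor projection $\PP_{V^*}\to V^*$. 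Writing $g(x,\xi)=f(x,\xi)\,\xi$ with $f=\langle x,\xi\rangle^{-1}$, one has $i_{g(x,\xi)}\omega^*=f\cdot i_\xi\omega^*$ and $dg=df\otimes\xi+f\cdot p_2$, where $p_2\colon V\times V^*\to V^*$ is the second-factor projection. In the multilinear expansion of $g^*(i_\eta\omega^*)=f\cdot(i_\xi\omega^*)\circ(dg)^{\otimes(n-1)}$, every term containing a $df\otimes\xi$ component vanishes because $i_\xi(i_\xi\omega^*)=0$; only the $f\cdot p_2$ part contributes, giving
\[
g^*(i_\eta\omega^*)=f^n\cdot p_2^*(i_\xi\omega^*)=\langle x,\xi\rangle^{-n}\,i_\xi\omega^*.
\]
This is precisely the form to which $\pi_2^*$ is applied in the definition of $\omega_2^V(\phi)$. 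Assembling all factors and identifying $\epsilon^*\in or(V^*)$ with $\epsilon\in or(V)$ under the normalization $\langle\vol,\vol^*\rangle=1$ then yields $F_V^*\omega_1^{V^*}(\phi)=\omega_2^V(\phi)$.

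The second identity $\Psi_{V^*}^{p,q}(K^*)=\Phi_V^{p,q}(K)$ follows by applying the first identity with $V$ and $V^*$ interchanged and using the natural isomorphisms $V^{**}\simeq V$, $\vol^{**}=\vol$, and $K^{**}=K$.

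I expect the main technical obstacles to be the consistent bookkeeping of the orientation factors through the pushforward in Lemma~\ref{lem:polar}, and the verification that the form $g^*(i_\eta\omega^*)$ obtained above, regarded as an object on $\PP_V^+$, agrees with $\pi_2^*(\langle x,\xi\rangle^{-n}i_\xi\omega^*)$ from the definition of $\omega_2^V(\phi)$. Both descriptions rely on the vanishing $i_\xi(i_\xi\omega^*)=0$, which also drives the coordinate-free computation above, so this comparison should be routine.
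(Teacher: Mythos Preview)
Your approach is correct and is essentially the paper's own proof: reduce the identity to the pointwise pullback equality $F_V^*\omega_{1}^{V^*}(\phi)=\omega_{2}^V(\phi)$ via Lemma~\ref{lem:polar}, and then verify that equality of forms. The paper simply asserts the pullback identity, whereas you supply the multilinear computation behind it; the argument and the structure are the same.
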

\begin{proof}
 Let $\psi \in \Sym^p V \otimes \Sym^q V^*$ and use the dual  density
 	$\vol^*$ to define $\omega_{1,V^*}^{p,q}$.   Let $F_{V}$ denote the map of Lemma~\ref{lem:polar}. Then 
$$ (F_V)^*  \omega_{1,V^*}^{p,q}(\psi) = \omega_{2,V}^{p,q}(\psi)$$
and the first identity follows from Lemma~\ref{lem:polar}. The same argument proves the second identity. 

\end{proof}

\subsection{$L^p$ projection and centroid bodies} \label{sec:Lp}
Before we proceed to construct  further examples of  affine Minkowski valuations from the families $\Phi^{p,q}$ and $\Psi^{p,q}$  , let us pause for a moment to take a closer look at the boundary cases $p=0$ and $q=0$.

Let us fix a euclidean inner product on $V\simeq \RR^n$ with  euclidean density $\vol$. 
For $p=0$ it follows from  \eqref{eq:PhipqK} that 
$$ h_{\Phi^{0,q}K}(\xi )= \int_{S^{n-1}} |\langle \xi , u^q\rangle | h_K(u)^{1-q}   dS_{n-1}(K,u).$$
In particular, for $(p,q)=(0,1)$ we recover the  classical projection body of $K$. 
Recall that for $q\geq 1$ the $L^q$ projection body of $K\in \calK_0(\RR^n)$ introduced by Lutwak, Yang, and Zhang \cite{LYZ:LpAffine}  is  up to normalization the convex body with support function 
$$ h_{\Pi_q K}(v) = \left( q!\int_{S^{n-1}} |\langle u,v\rangle|^q h_K(u)^{1-q} dS_{n-1}(K,u)\right)^{1/q}.$$
Hence, if $q$ is an integer, then it follows from \eqref{eq:innerprodTensor} that
$$ h_{\Phi^{0,q}K}(v^q)=  h_{\Pi_q K}(v)^q, \quad v\in S^{n-1}.$$

Lutwak, Yang, and Zhang \cite{LYZ:LpAffine}  define for $p\geq 1$ the  $L^p$ centroid body of  the convex body $K$  up to normalization  as the convex body with support function
$$ h_{\Gamma_p K}(v)= \left(p!(n+p)\int_K |\langle v, x\rangle|^p dx\right)^{1/p}.$$
To see the connection with the family $\Phi^{p,q}$, we use  the following simple consequence of the divergence theorem.

\begin{lemma} \label{lemma:lambdaHom}Let $\lambda>-n$ be a real number  and let  $f\colon \RR^n\setminus \{0\} \to\RR$ be continuous and $\lambda$-homogeneous. Then  for every convex body $K$ containing the origin in its interior
	$$ \int_{\partial K} f(x) \langle x, \nu(x)\rangle dx = (\lambda + n)  \int_K f(x) dx.$$
	\end{lemma}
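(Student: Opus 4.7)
My plan is to apply the divergence theorem to the vector field
\[
X(x) = f(x) \, x
\]
on $K$. Assuming sufficient regularity, the divergence of $X$ is
\[
\operatorname{div}(X)(x) = \langle \nabla f(x), x \rangle + n f(x) = (\lambda + n) f(x),
\]
where the last equality uses Euler's identity for $\lambda$-homogeneous functions. Since $\langle X, \nu \rangle = f(x) \langle x, \nu(x) \rangle$ on $\partial K$, the divergence theorem yields the claim.

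Two regularity issues must be addressed. First, $f$ is only assumed continuous, and second, $f$ (as well as its would-be derivative) may blow up at the origin. I would handle the singularity at $0$ by excising a ball $B_\varepsilon \subset K$ (possible since $0$ lies in the interior) and applying the divergence theorem on $K \setminus B_\varepsilon$. The boundary contribution over $\partial B_\varepsilon$ is controlled by
\[
\left| \int_{\partial B_\varepsilon} f(x)\langle x, x/|x| \rangle \, dx \right| \leq C \varepsilon^{\lambda+1} \cdot \varepsilon^{n-1} = C \varepsilon^{\lambda+n},
\]
which vanishes as $\varepsilon \to 0$ precisely because $\lambda > -n$. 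The same bound $|f| \leq C|x|^\lambda$ shows that $f$ is integrable on $K$, so dominated convergence handles the interior integral.

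To deal with mere continuity, I would approximate. Writing $f(x) = |x|^\lambda g(x/|x|)$ with $g \in C(S^{n-1})$, choose smooth $g_k \to g$ uniformly on $S^{n-1}$ with $\sup_k \|g_k\|_\infty \leq 2\|g\|_\infty$, and set $f_k(x) = |x|^\lambda g_k(x/|x|)$. Each $f_k$ is smooth on $\RR^n \setminus \{0\}$ and $\lambda$-homogeneous, so the identity holds for $f_k$. On the volume side, $|f_k| \leq C|x|^\lambda \in L^1(K)$ so dominated convergence gives $\int_K f_k \to \int_K f$. On the surface side, $f_k \to f$ uniformly on the compact set $\partial K$ (which avoids $0$), and $x \mapsto \langle x, \nu(x) \rangle$ is a bounded measurable function on $\partial K$, so the surface integrals also converge.

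I do not anticipate a genuine obstacle here: the argument is a routine combination of the divergence theorem on Lipschitz domains (valid since $K$ is convex), Euler's homogeneity identity, and approximation by mollification on the sphere. The only subtlety worth flagging is verifying that the error $\varepsilon^{\lambda+n} \to 0$, which is exactly the assumption $\lambda > -n$, confirming that this hypothesis is sharp.
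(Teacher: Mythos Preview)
Your proposal is correct and follows exactly the approach the paper indicates: the paper does not give a detailed proof of this lemma but simply introduces it as ``a simple consequence of the divergence theorem,'' and your argument supplies precisely those details (Euler's identity for the divergence of $f(x)x$, excision of $B_\varepsilon$ to handle the singularity at the origin, and approximation to pass from smooth to continuous $f$). There is nothing to add.
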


Applying Lemma~\ref{lemma:lambdaHom} to \eqref{eq:PhipqK} and using \eqref{eq:innerprodTensor}, we obtain $$ h_{\Phi^{p,0}K}(v^p)  =  h_{\Gamma_p K}(v)^p,\quad v\in S^{n-1},$$
for every integer $p\geq 1$ and every convex body $K$ containing the origin in its interior.

\subsection{Further  examples of affine Minkowski valuations}

The  $\mathrm{SL}(V)$-representation $\Sym^pV \otimes \Sym^q V^*$ is not irreducible for $p,q>0$ and $n>1$. Therefore it decomposes into a direct sum of isotypic components. 
 If $\pi_\lambda$ denotes the projection of $\Sym^pV \otimes \Sym^q V^*$ onto the isotypic component corresponding to the highest weight $\lambda$, then
$$ \Phi_{\lambda}^{p,q} = \pi_\lambda \circ \Phi^{p,q} \colon \calK_{0}(V) \to \calK(W_\lambda) $$
defines  a continuous $\mathrm{SL}(V)$- invariant Minkowski valuation; analogously $\Psi_\lambda^{p,q}$ is defined.
Let us first show that the resulting Minkowski valuations are indeed non-trivial.
\begin{lemma}\label{lemma:nontrivial}

	$\Phi^{p,q}_{\lambda}$ and $\Psi^{p,q}_{\lambda}$ are non-trivial.  
\end{lemma}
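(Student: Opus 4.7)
My plan is to use the integral representation of $\Phi^{p,q}$ from Theorem~\ref{thm:newMink} to translate non-triviality of $\Phi^{p,q}_\lambda$ into a polynomial identity, and then to argue analogously for $\Psi^{p,q}_\lambda$. Suppose for contradiction that $\pi_\lambda \Phi^{p,q}(K) = \{0\}$ for every $K \in \calK_0(V)$. Then $\Phi^{p,q}(K) \subset \ker \pi_\lambda$ for every $K$, which is equivalent to $h_{\Phi^{p,q}(K)}(\phi) = 0$ for every $K$ and every $\phi$ in the subspace $U_\lambda := \pi_\lambda^*(W_\lambda^*) \subset \Sym^p V^* \otimes \Sym^q V$. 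Because $\lambda$ appears in the isotypic decomposition, $U_\lambda$ is non-zero, and my goal becomes to force any such $\phi$ to vanish.

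The first step is to fix $\phi \in U_\lambda$ and restrict attention to convex bodies $K$ with smooth and strictly positively curved boundary, for which Theorem~\ref{thm:newMink} gives
\[ \int_{\partial K} |\langle \phi, x^p \otimes \nu_K(x)^q\rangle| \, \langle x, \nu_K(x)\rangle^{1-q} \, dx = 0. \]
The integrand is continuous and non-negative on $\partial K$ -- the factor $\langle x, \nu_K(x)\rangle$ is strictly positive because $0 \in \mathrm{int}\, K$ -- so it vanishes identically on $\partial K$. Thus $\langle \phi, x^p \otimes \nu_K(x)^q\rangle = 0$ at every boundary point of every such $K$.

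Next I would vary $K$ so that the boundary-normal pair ranges over a large set: given any $x_0 \in V \setminus \{0\}$ and any unit $\nu_0 \in V^*$ with $\langle x_0, \nu_0\rangle > 0$, take $K$ to be a sufficiently large Euclidean ball tangent at $x_0$ to the hyperplane with outer conormal $\nu_0$, the radius chosen so that $0 \in \mathrm{int}\, K$. This yields $\langle \phi, x_0^p \otimes \nu_0^q\rangle = 0$ for every such pair. The function $P_\phi(x, \nu) := \langle \phi, x^p \otimes \nu^q\rangle$ is a polynomial on $V \times V^*$ that is bi-homogeneous of bidegree $(p, q)$; by homogeneity in $\nu$ it vanishes on the open cone $\{(x, \nu) \in V \times (V^* \setminus \{0\}) : \langle x, \nu\rangle > 0\}$, and hence identically on $V \times V^*$ by the identity principle. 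Since the pairing \eqref{eq:pairing} is non-degenerate, $\phi \mapsto P_\phi$ is an isomorphism of $\Sym^p V^* \otimes \Sym^q V$ onto bi-homogeneous polynomials of bidegree $(p, q)$, so $\phi = 0$, contradicting $U_\lambda \neq 0$. The proof for $\Psi^{p,q}_\lambda$ is the same argument applied to the second formula of Theorem~\ref{thm:newMink}, using that the Gauss curvature $\kappa_K$ is strictly positive on $\partial K$ by hypothesis. I do not foresee a serious obstacle: the only non-routine ingredient is the realizability step for $(x_0, \nu_0)$, which the explicit ball construction handles.
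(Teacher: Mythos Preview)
Your proof is correct and follows essentially the same approach as the paper: both arguments hinge on the observation that a nonzero $\phi\in\Sym^p V^*\otimes\Sym^q V$, viewed as a polynomial on $V\times V^*$, cannot vanish on the open cone $\{\langle x,\xi\rangle>0\}$, together with the realizability of any pair $(x_0,\nu_0)$ in this cone as a boundary point and outer normal of a smooth, strictly convex body containing the origin. The paper phrases this directly (pick a point where $\phi$ is nonzero and realize it by an ellipsoid), while you run the contrapositive and use large balls, but the content is the same.
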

\begin{proof} It suffices to consider $\Phi^{p,q}_{\lambda}$. 
	Let $\phi \in \Sym^p V^* \otimes \Sym^qV$. We may think of $\phi=\phi(x,\xi)$ as a polynomial on $V\times V^*$.  
	If $\phi$ vanishes on the open set $\langle x, \xi\rangle >0$, then $\phi$ vanishes identically. Hence, if $\phi\neq 0$, then there exists $(x,\xi)$ satisfying $\langle x, \xi \rangle >0$ such that $\phi(x,\xi)\neq 0$. Clearly there exists a smooth convex body $K$ containing the origin in the interior such that $x$ is a boundary point of $K$ and $\xi$ an outward-pointing conormal to $K$ at $x$. In fact, $K$ can be chosen to be an ellipsoid.
	We conclude that 
	$h_{\Phi^{p,q}(K)}(\phi)>0$.  
\end{proof}

It remains to determine the highest weights appearing in the isotopic decomposition and to determine their multiplicity.

\begin{lemma}
	\label{lemma: decomposition of Symp tensor Symq}
	The highest weights appearing  in the isotypic  decomposition of the $\SL(V)$-representation  $\Sym^pV \otimes \Sym^q V^*$ are of the form
	\begin{equation}\label{eq:hwt} (p+q-2i)\varepsilon_1 + (q-i) (\varepsilon_2 + \dots + \varepsilon_{n-1}), \quad i=0,\dots , \min\{p,q\}. \end{equation}
	The second summand has to be interpreted as zero  if $n=2$.  Moreover, each such highest weight appears with multiplicity one. 
\end{lemma}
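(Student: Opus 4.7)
The plan is to deduce this from Lemma~\ref{lemma:decompGL}, which already gives the analogous isotypic decomposition of $\Sym^p V \otimes (\Sym^q V)^* \otimes \det^q$ as a $\GL(V)$-representation. There are essentially three steps, none of which requires genuinely new ideas.

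First, using the $\GL(V)$-equivariant isomorphism $(\Sym^q V)^* \simeq \Sym^q V^*$ induced by the pairing \eqref{eq:pairing}, rewrite
$$ \Sym^p V \otimes \Sym^q V^* \simeq \Sym^p V \otimes (\Sym^q V)^* $$
as $\GL(V)$-representations. Since $\det$ restricts trivially to $\SL(V)$, this representation is in turn $\SL(V)$-isomorphic to $\Sym^p V \otimes (\Sym^q V)^* \otimes \det^q$. Applying Lemma~\ref{lemma:decompGL}, the $\GL(V)$-highest weights appearing in the latter are exactly
$$ \mu_i = (p+q-i)\varepsilon_1 + q(\varepsilon_2 + \cdots + \varepsilon_{n-1}) + i\varepsilon_n, \quad i = 0, \ldots, \min\{p,q\}, $$
each with multiplicity one.

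Second, translate these into $\SL(V)$-highest weights. By the correspondence from the end of Section~\ref{sec:SL}, a $\GL(V)$-highest weight $\sum_{j=1}^n p_j \varepsilon_j$ restricts to the $\SL(V)$-highest weight $\sum_{j=1}^{n-1}(p_j - p_n)\varepsilon_j$. Applied to $\mu_i$, i.e.\ subtracting $i(\varepsilon_1 + \cdots + \varepsilon_n)$, this yields
$$ (p+q-2i)\varepsilon_1 + (q-i)(\varepsilon_2 + \cdots + \varepsilon_{n-1}), $$
which is precisely the weight claimed in \eqref{eq:hwt}. When $n=2$ the interior sum is empty and the second summand simply disappears, in agreement with the convention in the statement.

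Third, the multiplicity-one claim. One must check that distinct $\GL(V)$-summands indexed by different $i$'s do not collapse to the same $\SL(V)$-irreducible after restriction. This is immediate: the coefficient $p+q-2i$ of $\varepsilon_1$ in the $\SL(V)$-weights is strictly decreasing in $i$, so the $\min\{p,q\}+1$ weights are pairwise distinct. Hence the multiplicities are preserved, and the lemma follows. The only mild obstacle is the bookkeeping of the shift by a multiple of $\varepsilon_1 + \cdots + \varepsilon_n$ when passing from $\GL(V)$- to $\SL(V)$-weights; once this is handled, everything reduces to Lemma~\ref{lemma:decompGL}.
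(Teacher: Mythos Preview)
Your proof is correct and follows essentially the same approach as the paper: invoke Lemma~\ref{lemma:decompGL} for the $\GL(V)$-decomposition and then pass to $\SL(V)$-highest weights via the correspondence described in Section~\ref{sec:SL}. You have simply spelled out explicitly the bookkeeping (the shift by $i(\varepsilon_1+\cdots+\varepsilon_n)$ and the distinctness check) that the paper leaves to the reader.
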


\begin{proof}
This follows from the correspondence between representations of $\SL(V)$ and $\GL(V)$ and the decomposition of $\Sym^pV \otimes \Sym^q V^*$ with respect to the latter group, see Lemma~\ref{lemma:decompGL} and Section~\ref{sec:SL}.
\end{proof}

Theorem~\ref{thm:TensorDecomp} is now an immediate consequence of  Lemma~\ref{lemma: decomposition of Symp tensor Symq} and Lemma~\ref{lemma:nontrivial}. Since any finite-dimensional $\mathrm{SL}_n(\RR)$-representation is uniquely determined by a partition of size $n-1$, also Corollary~\ref{cor:dimLeq3} follows at once.

\subsection{A closer look at the case $(p,q)=(1,1)$}
\label{sec:11}
For $p,q>0$ the Minkowski valuations $\Phi^{p,q}$ display  features distinctively different from those of $L^p$ projection and centroid bodies. Let us illustrate this by taking a closer look at the simplest case $(p,q)=(1,1)$. The $\GL(V)$-representation $V\otimes V^*\simeq \End(V)$ is reducible, since there is the invariant subspace of trace-free endomorphisms. Let $W_0$ denote the kernel of the trace map. Then $W_0$ is irreducible with highest weight $\lambda= \varepsilon_1-\varepsilon_n$, and 
$$ \End(V) = W_0 \oplus \langle I \rangle .$$  

Choose a euclidean inner product on $V$ with  euclidean density $\vol$. Then by \eqref{eq:PhipqK} for every $A\in \End(\RR^n)$, we have 
\begin{equation}\label{eq:11}  h_{\Phi^{1,1} K} (A) = \int_{\partial K} | \langle Ax,\nu_K(x)\rangle|   dx.\end{equation}
Consequently, 
$$ h_{\Phi^{1,1} K} (I)= \int_{\partial K}  \langle x,\nu_K(x)\rangle   dx = n\vol_n(K),$$
as expected, since this corresponds to the projection of $\End(V)$ onto the trivial subrepresentation $\langle I\rangle$. 

A more interesting consequence of \eqref{eq:11} is that if $A=-A^t$ is skew-symmetric and $B^n\subset \RR^n$ denotes the euclidean unit ball, then 
$h_{\Phi^{1,1} B^n} (A) =0$.  Thus the image of the unit ball under $\Phi^{1,1}$ is not full-dimensional, but contained in the subspace of symmetric matrices. By $\SL(V)$-invariance, the image of every origin-symmetric ellipsoid is not full-dimensional.

Let $S\subset \RR^n$ be any simplex containing the origin in its interior, let $S_1,\ldots, S_{n+1}$ be its facets, and let $u_1,\ldots, u_{n+1}$ denote  the corresponding unit normals. By \eqref{eq:convexity}, we have 
$$  h_{\Phi^{1,1} S} (A) =\sum_{i=1}^{n+1}  \int_{S_i} | \langle Ay,u_i\rangle| dy.$$  
If $A\neq 0$, then there exists a normal $u_i$ such that the linear function $x\mapsto \langle Ax,u_i\rangle $ on $S_i$ is non-zero and therefore $ h_{\Phi^{1,1} S} (A)>0$. It follows that the image under $\Phi^{1,1}$ of every simplex containing the origin in its interior is a full-dimensional convex body in $\End(V)$. 

Note that $\Phi^{1,1}$ is homogeneous of degree $n$. By Lemma~\ref{lemma:imageSL}, the ratio 
$$\frac{\vol_{n^2}(\Phi^{1,1}K·)}{\vol_n(K)^{n^2}}$$
is invariant under the action of $\GL(V)$. It vanishes on origin-symmetric ellipsoids, but is positive on simplices containing the origin.

\section{A family of upper semicontinuous Minkowski valuations}

In this section we construct a family of $\SL(V)$-invariant Minkowski valuations,  closely related to the affine surface area of a convex body, that are not continuous, but merely upper semicontinuous. 
Property \eqref{eq:upper semi}, and hence the upper semicontinuity of a Minkowski valuation, can be conveniently rephrased in terms of support functions.
\begin{lemma} 
	For every sequence of convex bodies $(L_j)$ in $W$ and every convex body $L$ in $W$, the following statements are equivalent: 
	\begin{enuma}
		\item $\limsup_{j\to \infty} h_{L_j}(\xi) \leq h_L(\xi)$ holds for every $\xi \in W^*$. 
		\item For every convex body $D$ containing the origin in its interior  there exists $j_0\in \NN$ such that 
		$$ L_j \subset L+  D\quad \text{for all } j\geq j_0.$$
	\end{enuma}
\end{lemma}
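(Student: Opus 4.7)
The implication (b) $\Rightarrow$ (a) is the easy direction. Fix a convex body $D_0$ containing the origin in its interior and let $\epsilon>0$. The scaled body $\epsilon D_0$ also contains the origin in its interior, so by (b) there exists $j_0$ such that $L_j\subset L+\epsilon D_0$ for $j\geq j_0$. Taking support functions, $h_{L_j}(\xi)\leq h_L(\xi)+\epsilon h_{D_0}(\xi)$ for all $\xi\in W^*$. Hence $\limsup_{j\to\infty} h_{L_j}(\xi)\leq h_L(\xi)+\epsilon h_{D_0}(\xi)$, and since $\epsilon>0$ is arbitrary, (a) follows.

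For (a) $\Rightarrow$ (b), first I would check that the sequence $(L_j)$ is eventually bounded. Fix a basis $\xi_1,\dots,\xi_N,-\xi_1,\dots,-\xi_N$ of $W^*$ viewed as a spanning set. Since $\limsup h_{L_j}(\pm\xi_i)\leq h_L(\pm\xi_i)<\infty$, each sequence $h_{L_j}(\pm\xi_i)$ is bounded above, say by a constant $M$ for all $j\geq j_1$. This confines $L_j$ to a fixed compact set $B\subset W$ for $j\geq j_1$, which in particular yields a uniform Lipschitz bound: $|h_{L_j}(\xi)-h_{L_j}(\eta)|\leq R\,|\xi-\eta|$ for all $j\geq j_1$, where $R=\sup_{x\in B}|x|$.

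Now fix a convex body $D$ containing the origin in its interior. Since $0\in\mathrm{int}\, D$, there exists $c>0$ such that $h_D(\xi)\geq c$ for every $\xi$ in the unit sphere $S\subset W^*$. Suppose for contradiction that (b) fails for this $D$: then along a subsequence (still denoted $L_j$) there exist $\xi_j\in S$ with $h_{L_j}(\xi_j)>h_L(\xi_j)+h_D(\xi_j)\geq h_L(\xi_j)+c$. By compactness of $S$ we may pass to a further subsequence and assume $\xi_j\to \xi^*\in S$. Continuity of $h_L$ gives $h_L(\xi_j)\to h_L(\xi^*)$, while the equi-Lipschitz bound on the $h_{L_j}$ gives $|h_{L_j}(\xi_j)-h_{L_j}(\xi^*)|\to 0$. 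Combining,
\[
\limsup_{j\to\infty} h_{L_j}(\xi^*)=\limsup_{j\to\infty} h_{L_j}(\xi_j)\geq h_L(\xi^*)+c,
\]
contradicting (a) at the point $\xi^*$. Hence (b) holds.

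The main obstacle is the uniformity required in (a) $\Rightarrow$ (b): pointwise control of $\limsup h_{L_j}$ must be upgraded to a uniform bound over the unit sphere. The two ingredients that make this work are the equi-Lipschitz property of the $h_{L_j}$ (which follows from boundedness of $(L_j)$, itself a consequence of (a)) and the compactness of $S$, together with the fact that $0\in\mathrm{int}\, D$ gives a uniform lower bound $h_D\geq c>0$ on $S$. Everything else is bookkeeping.
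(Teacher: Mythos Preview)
Your proof is correct. The paper itself states this lemma without proof, treating it as an elementary reformulation of upper semicontinuity in terms of support functions, so there is no approach to compare against; your argument via eventual uniform boundedness, equi-Lipschitz control, and a compactness--contradiction argument on the unit sphere is the standard way to fill in this gap.
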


Let us also acknowledge the following elementary fact about the class $\mathrm{Conc}(0,\infty)$. 

\begin{lemma}
	If $f\in \mathrm{Conc}(0,\infty)$, then $f$  is monotonically non-decreasing and $f(t)/t$ is monotonically decreasing.
\end{lemma}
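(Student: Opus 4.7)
The proof should be short, relying only on concavity of $f$, the boundary condition $f(0)=0$, and non-negativity; the asymptotic condition $\lim_{t\to\infty}f(t)/t=0$ is not needed here.

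For the second assertion (that $f(t)/t$ is non-increasing), I would fix $0<s<t$ and write $s$ as the convex combination $s=\frac{t-s}{t}\cdot 0+\frac{s}{t}\cdot t$. Concavity of $f$ together with $f(0)=0$ then gives
\[
f(s)\geq \frac{t-s}{t}f(0)+\frac{s}{t}f(t)=\frac{s}{t}f(t),
\]
which is exactly $f(s)/s\geq f(t)/t$. This is the standard ``line-through-the-origin'' observation for concave functions vanishing at $0$.

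For the monotonicity, I would argue by contradiction: suppose there exist $0<s<t$ with $f(s)>f(t)$. For any $u>t$, express $t$ as the convex combination $t=\frac{u-t}{u-s}s+\frac{t-s}{u-s}u$ and apply concavity to obtain
\[
f(t)\geq \frac{u-t}{u-s}f(s)+\frac{t-s}{u-s}f(u).
\]
Solving for $f(u)$ yields
\[
f(u)\leq \frac{(u-s)f(t)-(u-t)f(s)}{t-s}=\frac{u\bigl(f(t)-f(s)\bigr)+tf(s)-sf(t)}{t-s}.
\]
Since $f(t)-f(s)<0$ by assumption, the right-hand side tends to $-\infty$ as $u\to\infty$, contradicting $f\geq 0$. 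Hence no such pair $(s,t)$ exists and $f$ is non-decreasing.

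The two arguments are essentially one-liners; there is no real obstacle. The only thing worth being careful about is separating which hypotheses get used where: the second assertion uses only concavity and $f(0)=0$, while the first additionally uses $f\geq 0$ (via letting $u\to\infty$).
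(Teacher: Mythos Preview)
Your proof is correct. The paper does not actually supply a proof of this lemma; it merely introduces it as an ``elementary fact'' and moves on, so there is no argument to compare against. Your two one-line deductions from concavity, $f(0)=0$, and non-negativity are the standard ones and are exactly what is needed.

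One small remark: the lemma says ``$f(t)/t$ is monotonically decreasing,'' but what you prove (and what the paper actually uses later, in the proof of Theorem~\ref{thm:semi}) is that $f(t)/t$ is non-increasing. Strict decrease can fail, e.g.\ for the zero function, which lies in $\mathrm{Conc}(0,\infty)$. This is a wording issue in the statement, not a gap in your argument.
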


We call a subset $C\subset \RR^n$ conical if $x\in C$ and $t>0$ imply $tx\in C$.

\begin{proposition}\label{prop:LebesgueCK}
Let $\phi\in \Sym^p (\RR^n)^*$ be fixed. For every convex body  $K\in\calK_0(\RR^n)$ define two Borel measures on $\RR^n$ by 
 \begin{align*}
 	\mu_K(U) & = \int_{\partial K \cap U}  |\langle \phi, x^p\rangle| \langle x,\nu_K(x)\rangle  dx,\\
 	 C_K(U)  &=  \int_{\nc(K)\cap \pi_1^{-1}(U)}  |\langle \phi, x^p\rangle | \langle x,u\rangle^{-n} \pi_2^*\omega_{S^{n-1}},
 \end{align*}
where $\omega_{S^{n-1}}$ denotes the Riemannian volume form on the sphere $S^{n-1}$.  The following properties hold:
\begin{enuma}
\item  For every conical Borel set $U\subset \RR^n$ and every convergent sequence $K_j\to K$ in $\calK_0(\RR^n)$ 
$$ \lim_{j\to \infty} \mu_{K_j} (U) = \mu_K (U).$$
\item  For every closed set $A\subset \RR^n$ and every convergent sequence $K_j\to K$ in $\calK_0(\RR^n)$
 $$ \limsup_{j\to \infty} C_{K_j}(A) \leq C_{K}(A).$$

\item 
Let $$C_K \mres \partial K = C^a_K + C^s_K$$
denote the Lebesgue decomposition  of $C_K \mres \partial K$ with respect to the restriction of the $(n-1)$-dimensional Hausdorff measure to $\partial  K$. 
Then  $C^a_K$ is concentrated on  $(\partial K)_+ $ and 
$$ C^a_K(U) = \int_{\partial K \cap U } \wt\kappa_K(x) d\mu_K(x)$$
for every Borel set $U\subset \RR^n$. 
\end{enuma}

\end{proposition}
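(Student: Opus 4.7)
For part (a), the approach is to reparameterize the surface integral on $\partial K$ as an integral on $S^{n-1}$ via the radial map. Since $K \in \calK_0(\RR^n)$, $\partial K = \{\rho_K(u) u : u \in S^{n-1}\}$; conicality of $U$ makes $\mathbf{1}_U$ zero-homogeneous, so $|\langle \phi, x^p\rangle|\mathbf{1}_U(x)$ is $p$-homogeneous on $\RR^n\setminus\{0\}$. Combining the divergence theorem with the change to polar coordinates (a bounded-measurable version of Lemma~\ref{lemma:lambdaHom}, applied to $f(x) = |\langle \phi, x^p\rangle|\mathbf{1}_U(x)$) yields the closed form
$$ \mu_K(U) = \int_{S^{n-1}} |\langle \phi, u^p\rangle|\, \mathbf{1}_U(u)\, \rho_K(u)^{p+n}\, du. $$
Since $K_j \to K$ in $\calK_0(\RR^n)$ entails uniform convergence $\rho_{K_j}\to\rho_K$ on $S^{n-1}$, dominated convergence delivers (a).

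For part (b), the plan is to invoke Proposition~\ref{prop:nc}(b). Because $K_j \to K$ inside $\calK_0(\RR^n)$, the conormal cycles $\nc(K_j)$ and $\nc(K)$ are eventually contained in a common compact subset of $\{\langle x,\xi\rangle \geq c\}$ for some $c > 0$. Multiplying the integrand $|\langle\phi,x^p\rangle|\langle x,\xi\rangle^{-n}\pi_2^*\omega_{S^{n-1}}$ by a smooth cutoff supported in $\{\langle x,\xi\rangle > c/2\}$ leaves all integrals unchanged but produces a smooth, compactly supported $(n-1)$-form on $\PP_V$. Proposition~\ref{prop:nc}(b) then furnishes weak convergence $C_{K_j}\to C_K$ of finite Borel measures, and the portmanteau theorem gives $\limsup_j C_{K_j}(A) \leq C_K(A)$ on closed $A$.

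For part (c), first note that $\mathcal{H}^{n-1}(\partial K \setminus (\partial K)_+) = 0$ by Lemma~\ref{lemma:Gauss Lipschitz}(a); hence any measure absolutely continuous with respect to $\mathcal{H}^{n-1}\mres\partial K$ is automatically concentrated on $(\partial K)_+$, while $C_K\mres(\partial K \setminus (\partial K)_+)$ is singular. It remains to identify the density of $C_K\mres(\partial K)_+$. Writing $(\partial K)_+ = \bigcup_{r>0}(\partial K)_r$, Lemma~\ref{lemma:Gauss Lipschitz}(b) tells us that $\nu_K$ is Lipschitz on each $(\partial K)_r$, so $\b\nu(x) = (x,\nu_K(x))$ is a Lipschitz bijection from $(\partial K)_r$ onto $\nc(K)\cap\pi_1^{-1}((\partial K)_r)$, with uniqueness of outer normals on $(\partial K)_+$ ensuring surjectivity. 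Pulling back the integrand and invoking Lemma~\ref{lemma:Gauss Lipschitz}(c) to identify $\nu_K^*\omega_{S^{n-1}} = \kappa_K\,d\mathcal{H}^{n-1}$ gives
$$ \b\nu^*\bigl(|\langle\phi,x^p\rangle|\langle x,\xi\rangle^{-n}\pi_2^*\omega_{S^{n-1}}\bigr) = |\langle\phi,x^p\rangle|\langle x,\nu_K(x)\rangle^{-n}\kappa_K(x)\, d\mathcal{H}^{n-1} = \wt\kappa_K(x)\, d\mu_K(x) $$
on $(\partial K)_r$, and taking the union over $r > 0$ yields the formula for $C_K^a$.

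The main obstacle lies in part (c): one must justify rigorously that $C_K$ restricted to the region of $\nc(K)$ over $(\partial K)_+$ really is the pushforward by $\b\nu$ of the pulled-back form, with matching orientation and no hidden cancellation coming from the current structure of $\nc(K)$. For smooth, strictly convex $K$ this is implicit in the proof of Theorem~\ref{thm:newMink}; in general it is handled by the area formula for Lipschitz maps between rectifiable sets, together with a direct check that the canonical orientation of $\nc(K)$ is carried by $\b\nu^{-1}$ to the orientation of $\partial K$ inherited from $V$ on all of $(\partial K)_+$.
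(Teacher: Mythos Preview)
Your proposal is correct and follows essentially the same approach as the paper: part (a) via the polar reformulation $\mu_K(U)=\int_{S^{n-1}}|\langle\phi,u^p\rangle|\mathbf 1_U(u)\rho_K(u)^{p+n}\,du$ and dominated convergence, part (b) via a smooth cutoff, weak convergence of the measures from Proposition~\ref{prop:nc}(b), and the Portmanteau theorem, and part (c) via the Lipschitz graphing map $\b\nu$ on $(\partial K)_r$ together with the area formula. The paper's treatment of (c) is terser---it cites \cite[eq.~(4)]{Hug:Contributions} for the density computation---whereas you spell out the area-formula argument and rightly flag the orientation/current-structure compatibility as the technical crux.
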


\begin{proof}
(a) If $h\colon \RR^n\setminus \{0\} \to \RR$ is smooth and    homogeneous of degree $\lambda> -n$,  
the divergence theorem and passing to polar coordinates imply 
$$ \int_{\partial K} h(x) \langle x, \nu_K(x)\rangle  dx 
= \int_{S^{n-1}} h(u) \rho_K(u)^{\lambda+n} du.$$
By approximation, this identity continues to hold for  Borel functions $h$ that are homogeneous of degree  $\lambda> -n$ and bounded when restricted to $S^{n-1}$. 
The continuous dependence on $K$ of the right-hand side is clear by dominated convergence.

(b) Using a smooth cut-off function  around the origin, we can replace $ \langle x,u\rangle^{-n} \pi_2^*\omega_{S^{n-1}}$ by a smooth differential form without changing the integral. 
Hence, Proposition~\ref{prop:nc} implies $C_{K_j}\to C_{K}$ weakly as measures and the claim follows from the Portmanteau theorem.

(c)
Lemma~\ref{lemma:Gauss Lipschitz} implies  that the graphing map $\b \nu_K \colon (\partial K)_r \to \nc(K)\subset S\RR^n$, $\b\nu_K(x)= (x,\nu_K(x))$, is Lipschitz. This shows that the absolutely continuous part of $C(K,\cdot)\mres \partial K$ is concentrated on $(\partial K)_+$.
The expression for the absolutely continuous part follows from the area formula and  \cite[eq. (4)]{Hug:Contributions}.

\end{proof}

\begin{lemma}\label{lemma:Lusin} Let $\phi\in \Sym^p (\RR^n)^*$ be fixed. For every $\epsilon>0$ there exists a  closed subset $\omega$ of $ \partial K$ satisfying the properties
	\begin{enuma}
		\item $\omega\subset (\partial K)_+$
		\item $\wt \kappa_K$ restricted to $\omega$ is continuous
		\item  $\mu_K( K \setminus \omega)< \epsilon$. 
	\end{enuma}
\end{lemma}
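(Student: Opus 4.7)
The plan is a standard Lusin-type approximation. First, note that $\mu_K$ is a finite Borel measure on $\partial K$ that is absolutely continuous with respect to the restriction of $(n-1)$-dimensional Hausdorff measure to $\partial K$, its density $|\langle\phi,x^p\rangle|\langle x,\nu_K(x)\rangle$ being a bounded measurable function (bounded on the compact set $\partial K$). Since Lemma~\ref{lemma:Gauss Lipschitz}(a) says $\partial K\setminus(\partial K)_+$ has $\mathcal H^{n-1}$-measure zero, the same holds for $\mu_K$. Writing $(\partial K)_+$ as the increasing union $\bigcup_{r>0}(\partial K)_r$ and invoking continuity of the finite measure $\mu_K$ from below, I would fix $r>0$ so small that $\mu_K(\partial K\setminus (\partial K)_r)<\epsilon/2$. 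A minor technical adjustment---replacing open balls by closed balls $\bar B(a,r)$ in the definition of $(\partial K)_r$---ensures that the exhausting set is itself closed in $\partial K$ by a routine compactness argument, while still lying inside $(\partial K)_+$.

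Next I would argue that $\wt\kappa_K$ is $\mu_K$-a.e.\ defined and Borel measurable on $(\partial K)_r$. By Lemma~\ref{lemma:Gauss Lipschitz}(b)--(c), the Gauss map $\nu_K$ restricted to $(\partial K)_r$ is Lipschitz and $\kappa_K$ coincides almost everywhere with its Jacobian, hence is a measurable function there. Since $0$ lies in the interior of $K$ and $(\partial K)_r$ is compact, the continuous function $x\mapsto\langle x,\nu_K(x)\rangle$ is bounded away from zero on $(\partial K)_r$. Consequently $\wt\kappa_K=\kappa_K/\langle x,\nu_K(x)\rangle^{n+1}$ is a well-defined, Borel measurable function $\mu_K$-a.e.\ on $(\partial K)_r$.

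Finally, I would apply Lusin's theorem to the finite Borel measure $\mu_K\mres(\partial K)_r$ on the closed metric space $(\partial K)_r$: this yields a closed subset $\omega\subset(\partial K)_r$ with $\mu_K((\partial K)_r\setminus\omega)<\epsilon/2$ on which $\wt\kappa_K$ is continuous. Being closed in $(\partial K)_r$, which is itself closed in $\partial K$, the set $\omega$ is closed in $\partial K$, sits inside $(\partial K)_+$, and satisfies $\mu_K(\partial K\setminus\omega)<\epsilon$, establishing (a)--(c). The only genuine subtlety is the topological non-closedness of $(\partial K)_r$ as literally defined in the excerpt; this is a cosmetic issue handled by the closed-ball substitution above, and does not affect any of the invoked Lemmas since they hold throughout $(\partial K)_+$.
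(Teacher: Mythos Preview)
Your proposal is correct and follows essentially the same route as the paper: both reduce the claim to a direct application of Lusin's theorem, using that $\mu_K$ is a finite Borel measure concentrated on $(\partial K)_+$ and that $\wt\kappa_K$ is measurable there. The paper's proof is a single sentence invoking Lusin's theorem on $(\partial K)_+$ to produce a compact $\omega$; you spell out more of the surrounding details (absolute continuity of $\mu_K$, the exhaustion by $(\partial K)_r$, measurability of $\wt\kappa_K$, and the cosmetic closedness issue), but the substance is identical.
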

\begin{proof}
	
 	By Lusin's theorem \cite[Theorem 1.2.2]{EvansGariepy} there exists a compact  subset $\omega\subset (\partial K)_+$ such that the restriction of $\wt \kappa_K$ to $\omega$ is continuous and  $\mu_{K}((\partial K)_+\setminus \omega)$ has arbitrarily small measure. 
\end{proof}

\begin{proof}[Proof of Theorem~\ref{thm:semi}]
 We will establish the properties of $\Phi_f^p$ first and will deduce those of $\Psi_f^p$ via polarity. 
 Let us write $\Phi=\Phi_f^p$ for brevity. Moreover, fix a euclidean inner product on $V\simeq \RR^n$ with  euclidean density $\vol$. 
 	
As a  consequnce of the concavity of $f$, Jensen's inequality, and Proposition~\ref{prop:LebesgueCK}, item (c), we obatin
	\begin{align*} \frac{1}{ \mu_{K}(\partial K)} \int_{\partial K}  f(\wt \kappa_K(x)) d\mu_{K}(x)& \leq  f(\frac{1}{ \mu_{K}(\partial K)} \int_{\partial K}  \wt \kappa_K(x) d\mu_{K}(x) )\\
	& = f( \frac{C_{K}^a(\partial K)}{ \mu_{K}(\partial K)})  
\end{align*} 	 
Hence the  integral \eqref{eq:Phifp}   is always finite. 
	Since  the right-hand side of \eqref{eq:Phifp} is clearly positively homogeneous and subadditive in $\phi$,  we have defined a map $\Phi \colon \calK_0(\RR^n)\to \calK(\Sym^p \RR^n)$. It follows from Section~\ref{sec:GaussAffine} that this map is $\SL_n(\RR)$-equivariant.

 Repeating Sch\"utt's proof \cite{Schutt:Asa} of the valuation property of the affine surface area shows that $\Phi$ is a valuation (see also \cite{Ludwig:GeneralAsa}).

It remains to prove the  upper semicontinuity of $\Phi$. Following closely Ludwig's argument \cite{Ludwig:Semicontinuity}, let us   fix a non-zero $\phi\in\Sym^p(\RR^n)^*$ and a convex body  $K\in \calK_0(\RR^n)$. Choose $\epsilon, \eta >0$.
Let $\omega\subset \partial K$ have the properties specified in Lemma~\ref{lemma:Lusin}. 

Set $$a= \min_{x\in \omega} \wt \kappa_K(x)\quad \text{and}\quad b = \max_{x\in \omega} \wt \kappa_K(x).$$
Since $f$ is uniformly  continuous on $[a,b]$, there exists $\delta>0$ such that 
 $|s-t|< \delta $ implies $|f(s)-f(t)|<\eta$.

Denote by $\wt \omega=\{ tx\colon x\in \omega, t\geq 0\}$ the cone spanned by $\omega$. 
Choose a subdivision  $a=t_0<t_1< \cdots < t_{m+1}=b$ of $[a,b]$ such that $t_{j+1}-t_j< \delta$ and such that 
$$ \calH^{d-1}(\{x\in \partial K_j \cap \wt \omega \colon  \wt\kappa_{K_j}(x)=t_i\})=0,$$ where $\calH^{d-1}$ denotes the $(d-1)$-dimensional Hausdorff measure, 
for all $i$ and $j$. 

Put 
$$ \omega_i = \{ x\in  \omega \colon  t_{i}\leq  \wt \kappa_{K}(x)\leq t_{i+1}\}$$
and let $\wt \omega_i$ denote the cone spanned by $\omega_i$. 

The monotonicity of $f$ obviously implies  
$$ \int_\omega f(\wt\kappa_K(x)) d\mu_K(x) = \sum_{i=1}^m 
\int_{\omega_i} f(\wt\kappa_K(x)) d\mu_K(x) \geq   \sum_{i=1}^m f(t_i) \mu_K(\omega_i).$$

It follows from Proposition~\ref{prop:LebesgueCK} that
$$ C_{K}( \omega_i) = C^a_{K}(\omega_i)\leq t_{i+1}  \mu_{K}(  \omega_i)$$
and 
$$ \int_{\partial K_j \cap \wt \omega_i}  \wt\kappa_{K_j}(x) d\mu_{K_j}(x)  \leq C_{K_j}( \wt \omega_i) .$$ 
Jensen's inequality and the latter inequality imply
 \begin{align*} \int_{\partial K_j \cap \wt \omega}  f(\wt\kappa_{K_j}(x)) d\mu_{K_j}(x)
 	&  = \sum_{i=1}^m  \int_{\partial K_j \cap \wt \omega_i}  f(\wt\kappa_{K_j}(x)) d\mu_{K_j}(x)\\
 	 	&  \leq \sum_{i=1}^m  f\left(\frac{1}{\mu_{K_j}(\wt \omega_i)} \int_{\partial K_j \cap \wt \omega_i}  \wt\kappa_{K_j}(x) d\mu_{K_j}(x)\right) \mu_{K_j}(\wt \omega_i)\\
 	 	& \leq  \sum_{i=1}^m  f(\frac{C_{K_j}(\wt \omega_i)}{\mu_{K_j}(\wt \omega_i)}) \mu_{K_j}(\wt \omega_i).
 \end{align*}
Since $\wt \omega_i$ is closed, Proposition~\ref{prop:LebesgueCK}, item (b), implies  $ \limsup_{j\to \infty} C_{K_j}(\wt \omega_i) \leq C_{K}(\wt \omega_i) $
while  item (a) of that proposition guarantees $ \lim_{j\to \infty} \mu_{K_j}(\wt \omega_i) = \mu_{K}(\wt \omega_i)$. We conclude that 
\begin{align*}  \limsup_{j\to \infty}  \int_{\partial K_j \cap \wt \omega}  f(\wt\kappa_{K_j}(x)) d\mu_{K_j}(x) &  \leq  \limsup_{j\to \infty}
	 \sum_{i=1}^m  f(\frac{C_{K_j}(\wt \omega_i)}{\mu_{K_j}(\wt \omega_i)}) \mu_{K_j}(\wt \omega_i)\\
	 & \leq  \sum_{i=1}^m  f(\frac{C_{K}( \omega_i)}{\mu_{K}( \omega_i)}) \mu_{K}(\omega_i)\\
	 & \leq  \sum_{i=1}^m  f(t_{i+1}) \mu_{K}(\omega_i)\\
	 & \leq  \sum_{i=1}^m  f(t_{i}) \mu_{K}(\omega_i) + \eta \mu_K(\omega)\\
	 & \leq \int_\omega f(\wt\kappa_K(x)) d\mu_K(x) + \eta \mu_K(\omega).
	\end{align*}
Letting  $\eta \to 0$, we obtain
$$ \limsup_{j\to \infty}  \int_{\partial K_j \cap \wt \omega}  f(\wt\kappa_{K_j}(x)) d\mu_{K_j}(x) \leq  \int_{\partial K} f(\wt\kappa_K(x)) d\mu_K(x) .$$

Since $f(t)$ is non-decreasing and $f(t)/t$ is non-increasing, we have  for every $t>0$
\begin{align*} 
	\int_{\partial K_j \setminus  \wt \omega}  f(\wt\kappa_{K_j}(x)) d\mu_{K_j}(x)  &= \int_{\{ \wt\kappa_{K_j}\leq t\} \setminus  \wt \omega } 
		f(\wt\kappa_{K_j}(x)) d\mu_{K_j}(x)\\
		&  +  \int_{\{ \wt\kappa_{K_j}> t\} \setminus  \wt \omega} 
			\frac{f(\wt\kappa_{K_j}(x))}{\wt\kappa_{K_j}(x)} \wt\kappa_{K_j}(x) d\mu_{K_j}(x)\\
			& \leq f(t) \mu_{K_j}(\RR^n\setminus \wt \omega) + 
			\frac{f(t)}{t} C_{K_j}(\RR^n).
\end{align*}
Hence 
$$ \limsup_{j\to \infty}  \int_{\partial K_j}  f(\wt\kappa_{K_j}(x)) d\mu_{K_j}(x) \leq  \int_\omega f(\wt\kappa_K(x)) d\mu_K(x)  +f(t)\epsilon  +\frac{f(t)}{t}C_{K}(\RR^n) .$$
Sending in this inequality  first $\epsilon\to 0$ and then $t\to \infty$, the claim follows.
\end{proof}

\begin{lemma}\label{lemma:polaritySemi}
	If we identify $(\RR^n)^*\simeq \RR^n$ via the euclidean inner product, then  
		\begin{align*}h_{\Phi_f^p (K^*)}(\phi)& = \int_{S^{n-1}}  |\langle \phi, u^p\rangle| h_{K}(u)^{-(n+p)}  f(\wt\alpha_K(u)) du\\
			h_{\Psi_f^p (K^*)}(\psi)& = \int_{S^{n-1}}  |\langle \psi, \nabla h_K(u) ^p\rangle | h_K(u)^{-n} f(\wt\alpha_K(u)) du
		\end{align*}
		holds for every convex body $K\in \calK_0(\RR^n)$,  $f\in \mathrm{Conc}(0,\infty)$ and $\phi,\psi\in\Sym^p(\RR^n)$.
	\end{lemma}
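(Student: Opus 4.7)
The plan is to apply the definitions of $\Phi_f^p$ and $\Psi_f^p$ to $K^*$ and reduce each integral over $\partial K^*$ to an integral over $S^{n-1}$ via the radial parametrization $u\mapsto \rho_{K^*}(u)u = u/h_K(u)$. The key inputs are: (i) the relation $\wt\kappa_{K^*} = \wt\alpha_{K^{**}} = \wt\alpha_K$ supplied by Corollary~\ref{cor:kappaAlpha}, together with the $0$-homogeneity of $\wt\alpha_K$ so that $\wt\alpha_K(u/h_K(u)) = \wt\alpha_K(u)$; and (ii) the expression of the cone volume measure in polar coordinates, which yields
$$\int_{\partial K^*} g(x)\, d\upsilon_{K^*}(x) \;=\; \tfrac{1}{n}\int_{S^{n-1}} g(u/h_K(u))\, h_K(u)^{-n}\, du$$
for every Borel function $g$ on $\partial K^*$.

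For the first identity, I substitute $x = u/h_K(u)$ into the defining formula for $h_{\Phi_f^p(K^*)}(\phi)$. Since $|\langle\phi, x^p\rangle| = h_K(u)^{-p}|\langle\phi, u^p\rangle|$ and $f(\wt\kappa_{K^*}(x)) = f(\wt\alpha_K(u))$ by the remarks above, combining these with the polar form of $\upsilon_{K^*}$ produces exactly the stated integrand $|\langle\phi,u^p\rangle|\,h_K(u)^{-(n+p)} f(\wt\alpha_K(u))$ on $S^{n-1}$.

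For the second identity, the extra ingredient is to specify an outward conormal $\xi$ of $K^*$ at $x = u/h_K(u)$. By the polarity duality and $K^{**}=K$, whenever $h_K$ is differentiable at $u$ the vector $\nabla h_K(u)$ is a boundary point of $K$ whose normal cone contains $u$, and dually $\nabla h_K(u)$ lies in the outer normal cone of $K^*$ at $x = u/h_K(u)$; hence $\xi = \nabla h_K(u)$ is an admissible conormal. The $1$-homogeneity of $h_K$ yields $\langle x, \xi\rangle = \langle u/h_K(u), \nabla h_K(u)\rangle = 1$, so the factor $\langle x,\xi\rangle^{-p}$ disappears. Substituting $\xi = \nabla h_K(u)$ and $\wt\kappa_{K^*}(x) = \wt\alpha_K(u)$ into the defining integral for $h_{\Psi_f^p(K^*)}(\psi)$ and applying the polar formula for $\upsilon_{K^*}$ gives the claimed expression.

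The main technical obstacle is justifying the change of variables for bodies that are merely in $\calK_0(\RR^n)$ rather than smooth. This is handled by Alexandrov's theorem: $h_K$ is twice differentiable at almost every $u \in S^{n-1}$, so the radial map $u\mapsto u/h_K(u)$ is well-defined and sufficiently regular on a set of full measure on $\partial K^*$, and Corollary~\ref{cor:kappaAlpha} supplies the pointwise-a.e.\ identification of $\wt\kappa_{K^*}$ with $\wt\alpha_K$. All integrands are non-negative and the integrals on either side are finite (by the same Jensen-type estimate used in the proof of Theorem~\ref{thm:semi}), so the a.e.\ pointwise substitutions yield the claimed identities.
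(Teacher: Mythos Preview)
Your proposal is correct and follows essentially the same route as the paper: both proofs use the radial parametrization $u\mapsto u/h_K(u)$ from $S^{n-1}$ to $\partial K^*$ together with Corollary~\ref{cor:kappaAlpha} to convert $\wt\kappa_{K^*}$ into $\wt\alpha_K$, and both identify the outer (co)normal of $K^*$ at $u/h_K(u)$ with the direction of $\nabla h_K(u)$. The only cosmetic difference is that the paper computes the Jacobian $JF(u)=h_K(u)^{-n}|\nabla h_K(u)|$ of this map via the area formula and then cancels the factor $|\nabla h_K(u)|$ against $\langle x,\nu_{K^*}(x)\rangle=|\nabla h_K(u)|^{-1}$, whereas you bypass that cancellation by writing the cone volume measure $\upsilon_{K^*}$ directly in polar coordinates; the two computations are equivalent (up to the overall factor $1/n$, which is a normalization convention).
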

	\begin{proof}
	Set $h=h_K$ for the sake of brevity. 
	We apply the area formula to the Lipschitz map $F\colon S^{n-1}\to \partial K^*$, $F(u)= h(u)^{-1} u$.
	The Jacobian of $F$ is 
	$$ JF(u)=h(u)^{-n} |\nabla h(u)|. $$  Note that 
	$$ \nu_{K^*}(F(u)) = |\nabla h(u)|^{-1}\nabla h(u) 	$$
	 if $h$ is differentiable at $u$
	and hence
	$$\langle F(u), \nu_{K^*}(F(u))\rangle = |\nabla h(u)|^{-1}.$$
	Since by Corollary~\ref{cor:kappaAlpha}
	$$  \wt \kappa_{K^*}(F(u))= \wt\alpha_K(F(u))= \wt\alpha_K(u),$$
	the above yields
	\begin{align*}h_{\Phi_f^p (K^*)}(\phi)& =  \int_{\partial K^*} |\langle \phi, x^p\rangle| \langle x, \nu_{K^*}(x)\rangle f(\wt \kappa_{K^*}(x)) dx \\
		& = \int_{S^{n-1}}  |\langle \phi, u^p\rangle| h_{K}(u)^{-p} |\nabla h_K(u)|^{-1} f(\wt\alpha_K(u) ) JF(u) du\\
	& =  \int_{S^{n-1}}  |\langle \phi, u^p\rangle| h_{K}(u)^{-(p+n)} f(\wt\alpha_K(u) ) du.	
	\end{align*}
	\end{proof}

For $f\in\mathrm{Conc}(0,\infty)$ define  $f^*(t)= tf(1/t)$. Note that $f^*\in \mathrm{Conc}(0,\infty)$ and  $(f^*)^*=f$. 
One may replace integration over the boundary of $K$  by integration over the unit sphere as follows.

\begin{lemma} \label{lemma:sphere} For every $K\in\calK_0(\RR^n)$ and $\phi,\psi\in\Sym^p(\RR^n)$
	\begin{align*} h_{\Phi_f^p(K)}(\phi) &= \int_{S^{n-1}}  |\langle \phi, \nabla h_K(u) ^p\rangle | h_K(u)^{-n} f^*(\wt\alpha_K(u)) du,\\
		h_{\Psi_f^p(K)}(\psi)& = \int_{S^{n-1}}  |\langle \psi, u^p\rangle | h_K(u)^{-(n+p)} f^*(\wt\alpha_K(u)) du.
	\end{align*}
\end{lemma}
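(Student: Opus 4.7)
The plan is to apply the area formula to parametrize $\partial K$ by $S^{n-1}$ via the inverse Gauss map $G(u) = \nabla h_K(u)$, and to use the involution $f\leftrightarrow f^*$ to absorb the resulting Jacobian.

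I would first do the calculation for bodies with smooth, strictly positively curved boundary, where $G\colon S^{n-1}\to \partial K$ is a diffeomorphism with $\nu_K\circ G = \mathrm{id}_{S^{n-1}}$ and Jacobian $|JG|(u) = \alpha_K(u) = 1/\kappa_K(G(u))$. Three identities do all the work:
\begin{itemize}
\item $\langle G(u), u\rangle = h_K(u)$, so $d\upsilon_K$ pulls back to $h_K(u)\alpha_K(u)\,du$;
\item $\wt\kappa_K(G(u))\,\wt\alpha_K(u) = 1$, which is an immediate consequence of $\kappa_K(G(u))\alpha_K(u)=1$ (Lemma~\ref{lemma:alphakappa}) together with $\wt\kappa_K(x) = \kappa_K(x)\langle x,\nu_K(x)\rangle^{-(n+1)}$ and $\wt\alpha_K(u) = h_K(u)^{n+1}\alpha_K(u)$;
\item the definition $f^*(t) = tf(1/t)$ gives $f(1/\wt\alpha_K(u)) = f^*(\wt\alpha_K(u))/\wt\alpha_K(u)$.
\end{itemize}
Substituting into $h_{\Phi_f^p(K)}(\phi) = \int_{\partial K} |\langle\phi, x^p\rangle|\,f(\wt\kappa_K(x))\,d\upsilon_K(x)$, all factors of $\alpha_K$ and all but $n$ powers of $h_K$ cancel, producing exactly the first formula. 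For $\Psi_f^p$, under the euclidean identification $V^*\simeq V$ the outward conormal at $G(u)$ is just $u$, so $\langle x,\xi\rangle^{-p}$ in the integrand becomes $h_K(u)^{-p}$, which contributes the additional $h_K^{-p}$ distinguishing the second formula, while $\langle\psi,\xi^p\rangle$ becomes $\langle\psi,u^p\rangle$.

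The passage from smooth bodies to arbitrary $K\in\calK_0(\RR^n)$ is the subtler step, since these valuations are only upper semicontinuous and a pure limiting argument is unavailable. Instead I would observe that the entire calculation above is an application of the area formula. By Lemma~\ref{lemma:Gauss Lipschitz}, $\nu_K$ is Lipschitz on each piece $(\partial K)_r$, so the area formula applies on each such piece; by Lemma~\ref{lemma:alphakappa}, for $du$-a.e.\ $u \in \nu_K((\partial K)_+)$ the map $G$ is defined, $h_K$ is twice differentiable at $u$, $G(u)$ is a normal boundary point, and $\wt\kappa_K(G(u))\,\wt\alpha_K(u)=1$ holds. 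Exhausting $(\partial K)_+$ by $(\partial K)_r$ as $r\downarrow 0$ and invoking monotone convergence reduces the identity to the smooth case.

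The main obstacle is the bookkeeping around degenerate sets: boundary points where $h_K$ fails to be twice differentiable, where $\alpha_K$ vanishes, or where the conormal is non-unique. These must be null simultaneously for $\upsilon_K$ on $\partial K$, for Lebesgue measure on $S^{n-1}$, and for the correspondence $G$; this is guaranteed precisely by Lemmas~\ref{lemma:Gauss Lipschitz} and~\ref{lemma:alphakappa} together with $0\in\mathrm{int}\,K$, which forces $h_K>0$ bounded away from zero. The conditions $f(0)=0$ and $f(t)/t\to 0$ defining $\mathrm{Conc}(0,\infty)$ translate under $f\mapsto f^*$ to the same conditions for $f^*$, thereby controlling the integrand near $\wt\alpha_K = 0$ and $\wt\alpha_K = \infty$ and confirming that the excluded sets contribute nothing on either side of the claimed identities.
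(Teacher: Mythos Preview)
Your proposal is correct and follows essentially the same route as the paper: apply the area formula to the Gauss map $\nu_K$ on the pieces $(\partial K)_r$ (where it is Lipschitz with Jacobian $\kappa_K$), use Lemma~\ref{lemma:alphakappa} to rewrite $f(\wt\kappa_K)$ as $h_K^{-(n+1)}\alpha_K^{-1}f^*(\wt\alpha_K)$, and then let $r\to 0$ via monotone convergence. The paper omits your preliminary smooth case and invokes Lemma~\ref{lemma:K+} explicitly to justify that the sphere integral over $S^{n-1}\setminus\nu_K((\partial K)_+)$ vanishes (since $\alpha_K(u)=0$ there forces $f^*(\wt\alpha_K(u))=0$), which is exactly the point you make in your last paragraph.
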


\begin{proof}
	This follows from the area formula applied to the Lipschitz map $\nu_K\colon (\partial K)_r\to S^{n-1}$. Let us write $\wt \kappa = \wt \kappa_K$, $\wt \alpha= \wt \alpha_K$, and $h=h_K$ for brevity.
	We have $J\nu_K(x) = \kappa(x)$. Since   $h$ is second order differentiable at a.e.\ $u\in \nu_K((\partial K_r))$ and 
	$\kappa(\nabla h(u))\alpha(u)=1$ at those points by Lemma~\ref{lemma:alphakappa},   the area formula yields 
	\begin{align*}
		\int_{(\partial K)_r}  |\langle \phi, x^p\rangle |& \langle x, \nu_K(x)\rangle f(\wt\kappa(x)) dx\\
		&=  \int_{\nu_K((\partial K)_r) }  |\langle \phi, \nabla h(u)^p\rangle | \langle \nabla h(u),u \rangle  f(\wt\alpha(u)) \alpha(u) du \\ 
		& =\int_{\nu_K((\partial K)_r) }  |\langle \phi, \nabla h(u)^p\rangle | h(u)^{-n} f^*(\wt\alpha(u)) du.
	\end{align*}
Letting $r\to 0$, 
	by monotone convergence and the fact that $x\in (\partial K)_+$ if $\alpha(u)>0$, $u=\nu_K(x)$, by Lemma~\ref{lemma:K+},  the claim follows. 
	Similar reasoning proves the statement for $\Psi_f^p$. 
\end{proof}

Let us write $\Phi_{f,V}^p$ and $\Psi_{f,V}^p$ to emphasize the dependence on $V$.  The following  should be compared with analogous statements  for general affine surface areas in \cite[Theorem 4]{Ludwig:GeneralAsa} and \cite{Hug:CurvRel}.

\begin{proposition}
\label{prop:polarity}
Let $\vol$ be a positive density on $V$ and let $\vol^*\in D(V)^*\simeq D(V^*)$ be defined by $\langle \vol, \vol^*\rangle =1$. 
After the natural identification $V^{**}\simeq V$, 
$$\Phi_{f,p,V^*}(K^*) = \Psi_{f^*,p,V}(K) \quad \text{and}\quad \Psi_{f,p,V^*}(K^*)= \Phi_{f^*,p,V}(K)$$
holds for every convex body $K\in \calK_0(V)$. 
\end{proposition}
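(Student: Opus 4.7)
The plan is to reduce everything to the two sphere integral formulas already proved, Lemmas~\ref{lemma:polaritySemi} and~\ref{lemma:sphere}, and to observe that the involution $f \mapsto f^*$ on $\mathrm{Conc}(0,\infty)$ exchanges them. Concretely, I would fix a euclidean inner product on $V$ whose associated Lebesgue density equals $\vol$; this identifies $V$ with $V^*$ and sends $\vol$ to $\vol^*$, so $K^*\in\calK_0(V^*)$ is identified with the usual euclidean polar body. In these terms the two objects we wish to compare are convex bodies in $\Sym^p V^*$ (resp.\ $\Sym^p V$), so it is enough to check equality of support functions on a dense subset of $\Sym^p V$ (resp.\ $\Sym^p V^*$).

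For the first identity, I would evaluate $h_{\Phi_{f,V^*}^p(K^*)}(\phi)$ for $\phi\in\Sym^p V$ by applying Lemma~\ref{lemma:polaritySemi} inside $V^*$ (so with $K$ there playing the role of our $K$ here and $K^*$ being identified with $(K^*)^*=K$ under the euclidean pairing); this yields
\[
	h_{\Phi_{f,V^*}^p(K^*)}(\phi) = \int_{S^{n-1}} |\langle \phi, u^p\rangle |\, h_K(u)^{-(n+p)}\, f(\wt\alpha_K(u))\, du.
\]
On the other side, Lemma~\ref{lemma:sphere} applied to $\Psi_{f^*,V}^p$ gives
\[
	h_{\Psi_{f^*,V}^p(K)}(\phi) = \int_{S^{n-1}} |\langle \phi, u^p\rangle |\, h_K(u)^{-(n+p)}\, (f^*)^*(\wt\alpha_K(u))\, du.
\]
The elementary computation $(f^*)^*(t) = t\cdot f^*(1/t) = t\cdot (1/t)f(t) = f(t)$ shows $f^{**}=f$, so the two integrals coincide and hence $\Phi_{f,V^*}^p(K^*) = \Psi_{f^*,V}^p(K)$.

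The second identity is proved by the same mechanism applied to the other pair of formulas: Lemma~\ref{lemma:polaritySemi} yields
\[
	h_{\Psi_{f,V^*}^p(K^*)}(\psi) = \int_{S^{n-1}} |\langle \psi, \nabla h_K(u)^p\rangle |\, h_K(u)^{-n}\, f(\wt\alpha_K(u))\, du,
\]
while Lemma~\ref{lemma:sphere} gives
\[
	h_{\Phi_{f^*,V}^p(K)}(\psi) = \int_{S^{n-1}} |\langle \psi, \nabla h_K(u)^p\rangle |\, h_K(u)^{-n}\, (f^*)^*(\wt\alpha_K(u))\, du,
\]
and the two are again equal by $f^{**}=f$.

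The only genuine obstacle is bookkeeping rather than mathematics: one must verify that the sphere-integral formulas of Lemmas~\ref{lemma:polaritySemi} and~\ref{lemma:sphere} remain valid when we transfer them from $V$ to the dual space $V^*$ equipped with the dual density $\vol^*$, and that the natural identification $V^{**}\simeq V$ respects all of $h_K$, $\nabla h_K$, $\wt\alpha_K$, and the Hausdorff measure on $S^{n-1}$ used in those formulas. Once this coordinate-free reading is in place, the proof is a one-line invocation of $(f^*)^*=f$.
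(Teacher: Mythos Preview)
Your proposal is correct and follows essentially the same approach as the paper's own proof: choose a euclidean inner product on $V$ with euclidean density $\vol$ so that $V\simeq \RR^n\simeq V^*$ and $\vol^*=\vol$, and then read off the identities by comparing Lemma~\ref{lemma:polaritySemi} with Lemma~\ref{lemma:sphere} using $(f^*)^*=f$. The paper's proof is a one-line pointer to these two lemmas; you have simply written out the comparison explicitly.
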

\begin{proof}

Choosing a euclidean inner product on $V$ with euclidean density  $\vol$, we have $V\simeq \RR^n \simeq V^*$ and $\vol^*=\vol$. The claim now follows from Lemmas~\ref{lemma:polaritySemi} and \ref{lemma:sphere}.

\end{proof}

\section{Open questions}

A basic question that remains open  is whether the examples of  invariant continuous  Minkowski valuations for irreducible representations constructed in this paper essentially exhaust already all possibilities. More precisely: 

\begin{question}
Is the condition on  the highest weight of $W$ specified in Theorem~\ref{thm:TensorDecomp} not only a sufficient condition for the existence of a non-trivial invariant continuous Minkowski valuation $\calK_0(V)\to\calK(W)$, but also necessary?
\end{question}

The following variant of the question seems also of interest: 

\begin{question}Do upper semicontinuous valuations arise only if $W\simeq \Sym^p V$ or $W\simeq \Sym^p V^*$? In particular, do there exist upper semicontinuous, but not continuous, invariant Minkowski valuations if $W = V\otimes V^*$?
\end{question}
 There are candidates for such  Minkowski valuations, but it  is not clear that they are upper semicontinuous.

As discussed in Section~\ref{sec:Lp}, the Minkowski valuations $\Phi^{p,0}$ and $\Phi^{0,q}$ are closely related to the $L^p$ centroid and $L^p$ projection bodies. Lutwak, Yang, and Zhang \cite{LYZ:LpAffine} have  established sharp affine isoperimetric inequalities for $L^p$ projection and $L^p$ centroid bodies. 
Let $d = \binom{n+p-1}{p} \binom{n+q-1}{q}$. It follows from Lemma~\ref{lemma:imageSL} and the homogeneity of $\Phi^{p,q}$  that there  exists a number $r$ such that the ratio 
$$\frac{\vol_d(\Phi^{p,q}K )}{ \vol_n(K)^r}$$
	 is invariant under the action of 
$\GL_n(\RR)$. One may ask for which convex bodies $K$ containing the origin in the interior this ratio is minimized or maximized. As the example discussed in Section~\ref{sec:11} shows, this minimum may be zero.
A comparison with the situation for the classical $L^p$ centroid bodies suggests that the following question might be a good starting point.

\begin{question} Let $p\geq 2$, $d=\binom{n+p-1}{p}$, and $r=d(p+n)/n$.
For which convex bodies $K$ in $\RR^n$ containing the origin in their interior
is  the ratio 
$$\frac{\vol_d(\Phi^{p,0}K )}{ \vol_n(K)^{r}}$$
minimal?
\end{question}

\bibliographystyle{abbrv}
\bibliography{ref_papers,ref_books}

\end{document}